\newtheorem{Theorem}{Theorem}[section]
\newtheorem{Lemma}[Theorem]{Lemma}
\newtheorem{Prop}[Theorem]{Proposition}
\newtheorem{Rem}[Theorem]{Remark}
\def\cC{\mathscr{C}}
\def\cF{\mathscr{F}}
\def\cG{\mathscr{G}}
\def\cR{\mathscr{R}}
\def\cS{\mathscr{S}}
\def\fA{\mathfrak{A}}
\def\bbD{\mathbb{D}}
\def\Erw{\mathbb{E}}
\def\H{\mathbb{H}}
\def\N{\mathbb{N}}
\def\Prob{\mathbb{P}}
\def\P{\mathbb{P}}
\def\R{\mathbb{R}}     
\def\bbS{\mathbb{S}}
\def\U{\mathbb{U}}
\def\Z{\mathbb{Z}}
\def\eps{\varepsilon}
\def\vth{\vartheta}
\def\1{\mathbf{1}}
\def\3{{\ss}}
\def\llam{\lambda\hspace{-5.1pt}\lambda}
\def\eqdist{\stackrel{d}{=}}
\def\wh{\widehat}
\def\sign{\textsl{sign}}
\def\diag{\textsl{diag}}
\def\IRg{\R_{\scriptscriptstyle >}}
\def\IRge{\R_{\scriptscriptstyle\geqslant}}
\def\IRl{\R_{\scriptscriptstyle <}}
\def\IRle{\R_{\scriptscriptstyle\leqslant}}
\newcommand{\Lip}{\textsf{Lip}\hspace{1pt}}
\newcommand{\ps}[1]{\hspace{-1.2pt}{}^{{\scalebox{.62}{ $#1$}}}\hspace{-2.2pt}}
\newcommand{\Anull}{\ps{0}A}
\newcommand{\Aplus}{\ps{\bm{+}}A}
\newcommand{\Aminus}{\ps{\bm{-}}A}
\newcommand{\Aeps}{^{\epsilon}\hspace{-3pt}A}
\newcommand{\Adelta}{^{\delta}\hspace{-3pt}A}
\newcommand{\Apm}{{}^{\scriptscriptstyle\pm}\hspace{-2pt}A}
\def\Eplus{\Erw_{\scriptscriptstyle +}}
\def\Eminus{\Erw_{\scriptscriptstyle -}}
\def\IHplus{\H_{\scriptscriptstyle +}}
\def\IHminus{\H_{\scriptscriptstyle -}}
\def\IPplus{\Prob_{\scriptscriptstyle +}}
\def\IPminus{\Prob_{\scriptscriptstyle -}}
\def\IUplus{\U_{\scriptscriptstyle +}}
\def\IUminus{\U_{\scriptscriptstyle -}}
\def\bUminus{\mathbf{U}_{\scriptscriptstyle -}}
\def\IUminplus{\U_{\scriptscriptstyle -}^{\scriptscriptstyle +}}
\def\supplus{^{\scriptscriptstyle (+)}}
\def\supminus{^{\scriptscriptstyle (-)}}
\def\suppm{^{\scriptscriptstyle (\pm)}}
\def\wtil{\widetilde}
\newcommand{\card}{\textsl{card}\hspace{.5pt}}
\def\ALIFS{\textsf{ALIFS}}
\def\AR{$\mathsf{AR}$}
\def\ARCH{$\mathsf{ARCH}$}
\def\IFS{$\mathsf{IFS}$}
\def\LIFS{$\mathsf{LIFS}$}
\def\MRW{$\mathsf{MRW}$}
\def\Lri{L^{\to}}
\def\Lle{L^{\leftarrow}}
\newcommand{\ppp}{p_{\scriptscriptstyle ++}}
\newcommand{\pppprime}{p_{\scriptscriptstyle ++}^{\,\prime}}
\newcommand{\pmm}{p_{\scriptscriptstyle --}}
\newcommand{\pmmprime}{p_{\scriptscriptstyle --}^{\,\prime}}
\newcommand{\ppm}{p_{\scriptscriptstyle +-}}
\newcommand{\pmp}{p_{\scriptscriptstyle -+}}
\newcommand{\pim}{\pi_{\scriptscriptstyle -}}
\newcommand{\pip}{\pi_{\scriptscriptstyle +}}
\newcommand{\whpim}{\wh{\pi}_{\scriptscriptstyle -}}
\newcommand{\whpip}{\wh{\pi}_{\scriptscriptstyle +}}
\newcommand{\deltam}{\delta_{\scriptscriptstyle -}}
\newcommand{\deltap}{\delta_{\scriptscriptstyle +}}
\def\uplus{u_{\scriptscriptstyle +}}
\def\uminus{u_{\scriptscriptstyle -}}
\def\vplus{v_{\scriptscriptstyle +}}
\def\vminus{v_{\scriptscriptstyle -}}
\def\wplus{w_{\scriptscriptstyle +}}
\def\wminus{w_{\scriptscriptstyle -}}
\def\kappaplus{\kappa_{\scriptscriptstyle +}}
\def\kappaminus{\kappa_{\scriptscriptstyle -}}
\def\Cplus{C_{\scriptscriptstyle +}}
\def\Cminus{C_{\scriptscriptstyle -}}
\def\Cminplus{C_{\scriptscriptstyle -}^{\scriptscriptstyle +}}
\begin{document}

\title*{\Large Asymptotically linear iterated function systems on the real line}
\titlerunning{Asymptotically linear iterated function systems}

\author{Gerold Alsmeyer, Sara Brofferio and Dariusz Buraczewski}

\institute{{\sc Gerold Alsmeyer} \at Institute of Mathematical Stochastics, Department
of Mathematics and Computer Science, University of M\"unster,
Orl\'ean-Ring 10, D-48149 M\"unster, Germany.\at
\email{gerolda@math.uni-muenster.de}\\
{\sc Sara Brofferio} \at Universit\'e Paris-Saclay, CNRS, Laboratoire de Math\'ematiques d'Orsay, 91405 Orsay Cedex, et Universit\'e Paris-Est, CNRS, LAMA\\94010 Creteil, et Universit\'e Gustave Eiffel, LAMA\\77447 Marne-la-Vall\'ee, France\at
\email{sara.brofferio@math.u-pec.fr}\\
{\sc Dariusz Buraczewski} \at Institute of Mathematics, University of Wroclaw,
pl. Grunwaldzki 2/4, 50-384 Wroclaw, Poland.\at
\email{dbura@math.uni.wroc.pl}}

\maketitle

\abstract{
Given a sequence of i.i.d. random functions $\Psi_{n}:\R\to\R$, $n\in\N$, we consider the iterated function system and Markov chain which is recursively defined by $X_{0}^{x}:=x$ and $X_{n}^{x}:=\Psi_{n-1}(X_{n-1}^{x})$ for $x\in\R$ and $n\in\N$. Under the two basic assumptions that the $\Psi_{n}$ are a.s.~continuous at any point in $\R$ and asymptotically linear at the ``endpoints'' $\pm\infty$, we study the tail behavior of the stationary laws of such Markov chains by means of Markov renewal theory. Our approach provides an extension of Goldie's implicit renewal theory \cite{Goldie:91} and can also be viewed as an adaptation of Kesten's work on products of random matrices \cite{Kesten:73} to one-dimensional function systems as described. Our results have applications in quite different areas of applied probability like queuing theory, econometrics, mathematical finance and population dynamics, e.g.~\ARCH\  models and random logistic transforms.}

\bigskip

{\noindent \textbf{AMS 2020 subject classifications:}
Primary 60H25. Secondary 60F15, 60K15 }

{\noindent \textbf{Keywords:} iterated function system, asymptotically linear, stationary distribution, tail behavior, Markov renewal theory}

\section{Introduction}\label{sec:intro}

Let $\Psi,\Psi_{1},\Psi_{2},\ldots:\R\to\R$ be i.i.d. random functions, defined on a common probability space $(\Omega,\fA,\Prob)$,
such that $\Psi$ is a.s.~continuous at each $x\in\R$, i.e.
\begin{equation}\label{eq:Feller property}
\Prob[\omega:\Psi(\omega,\cdot)\text{ is continuous on } \R]\ =\ 1.
\end{equation}
 Then the associated iterated function system (\IFS), recursively defined by
\begin{equation}\label{eq:IFS}
X_{n}=\Psi_{n}(X_{n-1})\ =\ \Psi_{n}\cdots\Psi_{1}(X_{0})
\end{equation}
for $n\ge 1$, where $X_{0}$ is independent of the sequence $(\Psi_{n})_{n\ge 1}$ and $\Psi_{n}\cdots\Psi_{1}$ is used as shorthand for $\Psi_{n}\circ\ldots\circ\Psi_{1}$, forms a temporally homogeneous Markov chain on $\R$ which, by \eqref{eq:Feller property}, has the Feller property. For the case when $\Psi$ is asymptotically linear at $\pm\infty$ in the sense that
\begin{equation}\label{eq:def AL}
\sup_{x\le 0}|\Psi(x) - \Aminus x| \le B\quad\text{and}\quad
\sup_{x\ge 0}|\Psi(x) - \Aplus x| \le B
\end{equation}
for some real random variables $\Aplus,\,\Aminus, B$, the purpose of this article is to provide general conditions which
\begin{itemize}
\item ensure that $(X_{n})_{n\ge 0}$ possesses a stationary distribution $\nu$
\end{itemize}
and, a fortiori,
\begin{itemize}
\item allow to describe the tail behavior of $\nu$ at $\pm\infty$.
\end{itemize}
Instances of asymptotically linear \IFS, shortly called \ALIFS\ hereafter, appear in many contexts of applied probability and related fields like queueing models, econometrics, financial time series or population dynamics. The following known examples all fit into this class, at least after suitable conjugation $\Psi\rightsquigarrow g^{-1}\circ\Psi\circ g$ or extension of $\Psi$ from the positive halfline to the whole real line.
\begin{itemize}[leftmargin=.8cm]\itemsep1pt
\item[(i)] \emph{Random affine recursions}: $\Psi(x)=Ax+B$.
\item[(ii)] \emph{Lindley recursions}: $\Psi(x)=(Ax+B)^{+}$.
\item[(iii)]  \emph{ARCH(1) models}: $\Psi(x)=\left(\beta+\lambda x^{2}\right)^{1/2}Z$  with  $\beta,\lambda>0$.
\item[(iv)] \emph{\AR(1) models with \ARCH(1) errors}: $\Psi(x)=\alpha x+\left(\beta+\lambda x^{2}\right)^{1/2}Z$ with $\beta,\lambda>0$.
\item[(v)] \emph{Stochastic Beverton-Holt model}: $\Psi(x)=Ax/(1+x/B)$, $x>0$.
\item[(vi)] \emph{Random logistic transforms}: $\Psi(x)=A x(1-x)$, $x\in (0,1)$.
\end{itemize}
Here the greek letters are deterministic parameters whereas the capital letters $A,B,Z$ denote random variables, which in the last two examples are also supposed to be positive. In (vi), even $0<A<4$ must be assumed so as to guarantee that $\Psi$ forms a random self-map of $(0,1)$. Further examples of \ALIFS\ can be found in the survey papers by Aldous and Bandyopadhyay \cite{Aldous:B} and by Diaconis and Freedman \cite{Diaconis:Freedman}, and also in \cite[Section 6]{BroBura:13}.

To put our work into context, we first mention Kesten's \cite{Kesten:73} seminal paper on random affine recursions $X_{n}=A_{n}X_{n-1}+B_{n}$ on $\R^{d}$ (the multivariate version of (i) with i.i.d. $d\times d$ random matrices $A_{n}$ and $d$-dimensional random vectors $B_{n}$), where it is shown, under conditions ensuring positive recurrence, that the tail behavior of the unique stationary law $\nu$ of $(X_{n})_{n\ge 0}$ can be determined by use of renewal theory (after a change of measure) for an associated Markov random walk (\MRW). This walk is obtained upon approximating $X_{n}$ by a linear \IFS~$Z_{n}$ and then decomposing $Z_{n}$ into its distal part, given by the Euclidean norm $|Z_{n}|$, and its directional part $Z_{n}/|Z_{n}|$ which forms a recurrent Markov chain on the sphere $\bbS^{d-1}$. If $d=1$, the latter reduces to the finite set $\bbS^{0}=\{\pm 1\}$. A renewal-theoretic approach was also taken by Goldie \cite{Goldie:91} who studied the tail behavior of $\nu$ for one-dimensional, asymptotically linear $\Psi$ with $\Aplus=\Aminus$. We refer to a recent monograph \cite{BurDamMik:16} for an overview on random affine recursions.

\vspace{.1cm}
One of the central questions to be answered in the present work is about the impact of distinct $\Aplus,\Aminus$ on the left and right tail of $\nu$.
 This will be accomplished by employing Kesten's method in the one-dimensional setup where it applies without various tedious technicalities that occur in higher dimension. The reason for this simplification is that, as already mentioned, the directional part $Z_{n}/|Z_{n}|$ of $X_{n}$ takes values in $\{\pm 1\}$ only and thus reduces to a  simple finite Markov chain. More precisely, we will compare the given \ALIFS\ with an approximating \LIFS\ (for \emph{linear} IFS) of random linear functions and apply Kesten's method to the latter one. The comparison idea has already appeared in recent work by Mirek \cite{Mirek:11} and by the authors \cite{Alsmeyer:16,BroBura:13}. Our approach may also be viewed as an extension of Goldie's implicit renewal theory, the extension being that the random walk in Goldie's approach is now Markov-modulated and thus a \MRW. We will return to this point with more explanations later.

\section{Basic assumptions and main results}\label{sec:main results}
Our standing assumption \eqref{eq:def AL} on $\Psi$ throughout this work can be expressed in the more compact form
\begin{gather}
\sup_{x\in\R}|\Psi(x)-\Lambda(x)|\ \le\ B\quad\text{a.s.}\label{eq:def2 AL}
\shortintertext{where}
\label{eq:def Lambda(x)}
\Lambda(x)\,:=\,{}^{\sign(x)}\!A x
\ =\ \begin{cases} \Aplus x,&\text{if }x>0,\\ \Aminus x,&\text{if }x<0,\\ \hfill 0,&\text{if }x=0.
\end{cases}
\end{gather}
for some real-valued random variables $\Aminus,\Aplus$ and $B$ such that, without loss of generality, $B\ge 1$. We further put $\Anull:=0$ and $\sign(x):=\1_{\IRg}(x)-\1_{\IRl}(x)$ for $x\in\R$, where  $\IRl:=(-\infty,0)$ and $\IRg:=(0,\infty)$.
In other words, we are given a sequence
$$ (\Psi_{n},\Lambda_{n},\!\Aminus_{n},\!\Aplus_{n},B_{n}),\quad n=1,2,,\ldots $$
of i.i.d. copies of $(\Psi,\Lambda,\Aminus,\Aplus,B)$ satisfying \eqref{eq:def Lambda(x)} and consider the Markov chain defined by
\eqref{eq:IFS}.

\begin{figure}[t]
\includegraphics[width=0.38\textwidth]{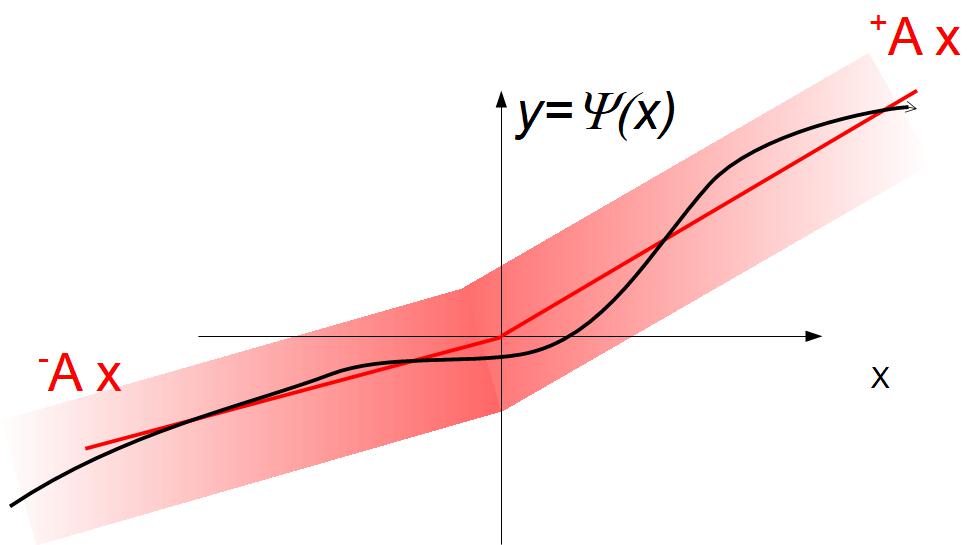}\hspace{1cm}\includegraphics[width=0.38\textwidth]{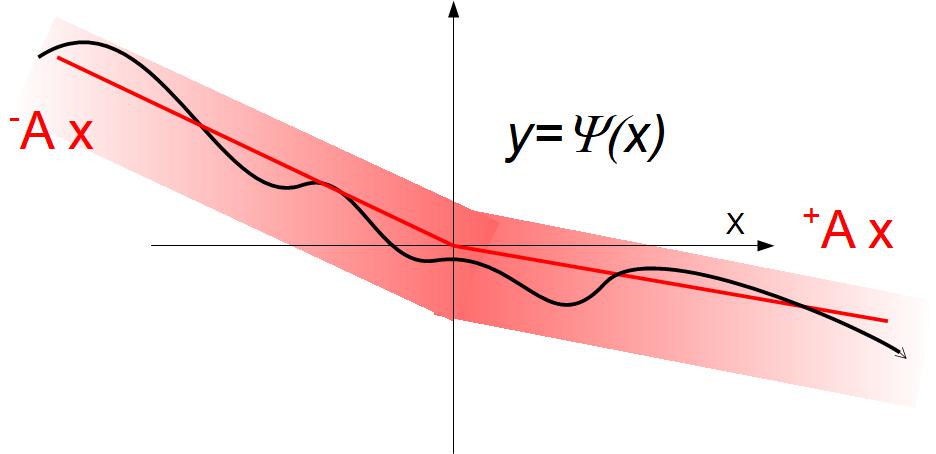}\\
\vspace{.3cm}
\includegraphics[width=0.38\textwidth]{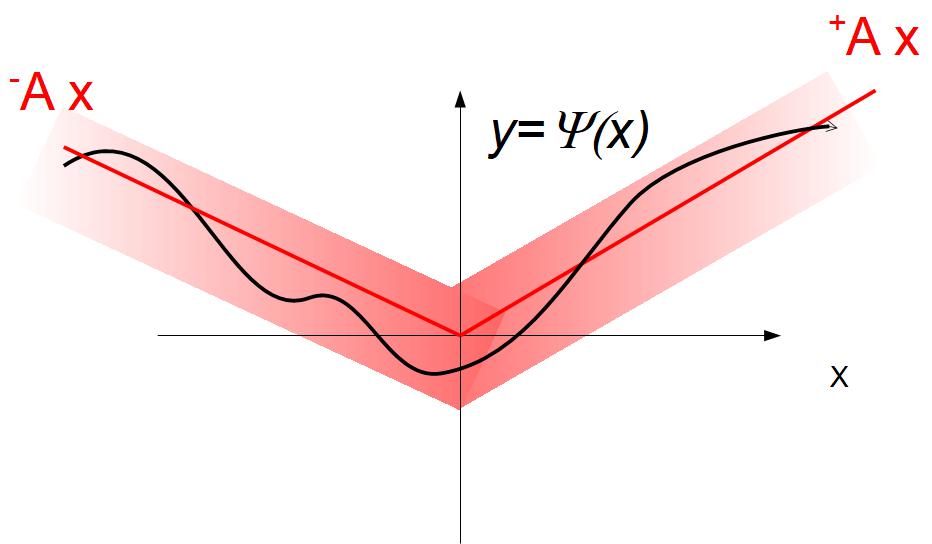}\hspace{1cm}\includegraphics[width=0.38\textwidth]{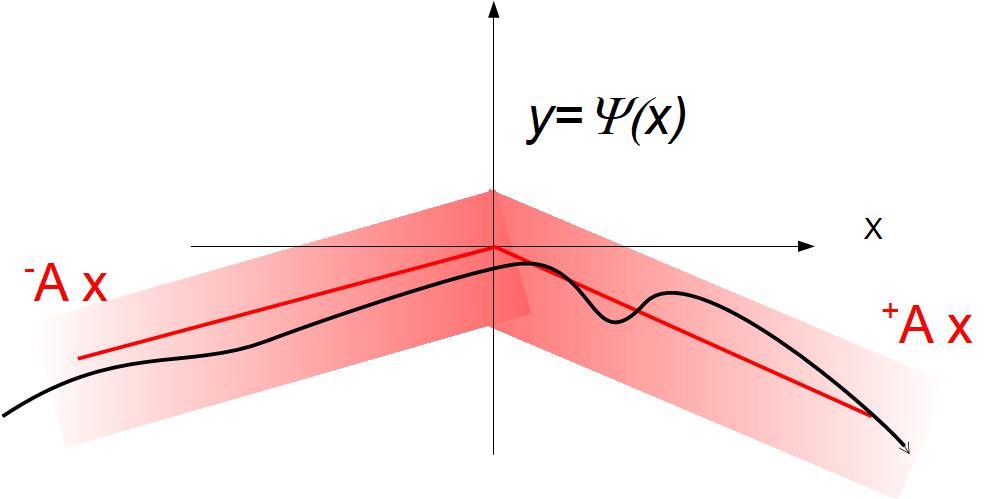}
\caption{The four possible shapes of $\Psi$.\newline
\textbf{Slash-type \textbf{$/\hspace{-4.27pt}/$}} (top left): ${}^{\scriptstyle -}\!A>0$ and ${}^{\scriptstyle +}\!A>0$, thus $-\Psi(-\infty)=\Psi(+\infty)=+\infty$.\newline\noindent
\textbf{Backslash-type \textbackslash} (top right): ${}^{\scriptstyle -}\!A<0$ and ${}^{\scriptstyle +}\!A<0$, thus $\Psi(-\infty)=-\Psi(+\infty)=+\infty$.\newline\noindent
\textbf{Vee-type $\bigvee$} (bottom left): ${}^{\scriptstyle -}\!A<0<{}^{\scriptstyle +}\!A$, thus $\Psi(-\infty)=\Psi(+\infty)=+\infty$.\newline\noindent
\textbf{Wedge-type $\bigwedge$} (bottom right): ${}^{\scriptstyle +}\!A<0<{}^{\scriptstyle -}\!A$, thus $\Psi(-\infty)=\Psi(+\infty)=-\infty$.}\label{fig1}
\end{figure}

\vspace{.1cm}
Provided that the observed values of $\Aminus,\Aplus$ are both nonzero, the pertinent realization of $\Psi$ as a function may, regarding its overall shape, exhibit one of four distinct types as illustrated in Fig.~\ref{fig1}. These are denoted mnemonically as slash-type $/$, backslash-type $\setminus$, vee-type $\vee$, and wedge-type $\wedge$. In the simple affine model with $\Aplus=\Aminus=A>0$, the function $\Psi$ is always of slash-type. Goldie's implicit renewal theory \cite{Goldie:91} is designed for \ALIFS\ with $\Psi$ satisfying
$$ |\Psi(x)-Ax|\ \le\ B $$
for some random variables $A,B$. It therefore mixes functions of slash-type and backslash-type such that $\Aminus=\Aplus$. The \AR(1)-model with \ARCH(1) errors provides an instance where functions of type $/$, $\bigvee$ and $\bigwedge$ are mixed. If the function $\Psi(x)$ is uniformly bounded for $x>0$ (resp. $x<0$), then  $\Aplus=0$ (resp. $\Aminus=0$). This occurs, for instance, in the Beverton-Holt model.

\vspace{.2cm}
In view of \eqref{eq:def2 AL}, it is natural to relate the \ALIFS\ $X_{n}=\Psi_{n}\cdots\Psi_{1}(X_{0})$ with the \LIFS\ $\Lambda_{n}\cdots\Lambda_{1}(X_{0})$ which in turn, following Kesten's approach in the present one-dimensional setting, can be studied with the help of a suitable temporally homogeneous Markov chain $\Xi=(\xi_{n})_{n\ge 0}$. Namely, let $\xi_{0}:=\sign(X_{0})$ and
\begin{equation}\label{eq:def xi_{n}}
\xi_{n}\,:=\,\sign(\Lambda_{n}(\xi_{n-1}))\ =\
\begin{cases}
\sign(\Aminus_{n})\xi_{n-1},&\text{if }\xi_{n-1}=-1,\\
\sign(\Aplus_{n})\xi_{n-1},&\text{if }\xi_{n-1}=1,\\
\hfill 0,&\text{if }\xi_{n-1}=0,
\end{cases}
\end{equation}
for $n\ge 1$. This chain has state space $\cS=\bbS^{0}\cup\{0\}$, and the state 0, if it appears, is absorbing in which case at least one of the states $\pm 1$ must be transient. Whenever convenient, the set $\bbS^{0}$ is identified with the set of signs $\{-,+\}$ (e.g., in sub- or superscripts as in \eqref{eq:def Lambda(x)}) because $\xi_{n}$ keeps track of the sign of $\Lambda_{n}\cdots\Lambda_{1}(X_{0})$, that is $\xi_{n}=\sign(\Lambda_{n}\cdots\Lambda_{1}(X_{0}))$ with $\Lambda_{n}\cdots\Lambda_{1}$ used as shorthand for $\Lambda_{n}\circ\cdots\circ\Lambda_{1}$. Let
$$ p_{\delta\epsilon}\,:=\,\Prob[\xi_{n}=\epsilon|\xi_{n-1}=\delta]\ =\ \Prob[\sign(\Adelta)\delta=\epsilon] $$
for $\delta,\epsilon\in\{-1,0,+1\}$, whence the possibly reduced and therefore substochastic transition matrix of $\Xi$ on $\bbS^{0}$ is given by
\begin{align}\label{eq:def of P}
P\ =\ \begin{pmatrix} \pmm &\pmp\\ \ppm &\ppp \end{pmatrix}
\ =\ \begin{pmatrix}
\Prob[\Aminus>0] &\Prob[\Aminus<0]\\ \Prob[\Aplus<0] &\Prob[\Aplus>0]
\end{pmatrix}.
\end{align}
As common, we put $\Prob_{\delta}:=\Prob[\cdot|\xi_{0}=\delta]$ and $\Prob_{\chi}:=\sum_{\delta\in\cS}\chi_{\delta}\Prob_{\delta}$ for any measure $\chi$ on $\cS$.

\vspace{.2cm}
In order to state our main results on the tail behavior of any stationary distribution of the given \ALIFS\ $(X_{n})_{n\ge 0}$, we distinguish three cases regarding the transition structure of the chain $\Xi$ (see Fig. \ref{fig11}).
\begin{description}
\item[\bf Case 1 (irreducible case)]  $p_{-+}>0$ and $p_{+-}>0$, that is both $\Aplus$ and $\Aminus$ are negative with positive probability. We will see that the tails of an invariant distribution $\nu$ at $+ \infty$ and $-\infty$ are of the same order in this case.
\item[\bf Case 2 (unilateral case)] $p_{-+}>0$ and $p_{+-}=0$ , that is $\Aminus$ is negative with positive probability but $\Aplus\ge 0$. The functions $\Psi$ are only of types $/$ and $\vee$. In this case, the order of decay of $\nu$ at $+ \infty$ can depend on both coefficients $\Aplus$ and $\Aminus$, while  the behavior at $-\infty$ depends only on $\Aminus$. The corresponding case  $p_{+-}>0$ and $p_{-+}=0$ can be treated without further ado after conjugation by $x\mapsto-x$.
\item[\bf Case 3 (separated case)] $p_{-+}=0$ and $p_{+-}=0$ , that is $\Aminus\ge 0$ and $\Aplus\ge 0$. The functions $\Psi$ are only of type $/$. In this case, the order of decay of the tail of $\nu$ at $+\infty$ (resp. $-\infty$) depends only on $\Aplus$ (resp. $\Aminus$).
\end{description}

\begin{figure}
 \begin{tikzpicture}[font=\sffamily]
        \tikzset{node style/.style={state,
                                    minimum width=.4cm, 
                                    line width=.2mm,
                                    fill=yellow!70!white}}

        \node[node style] at (0, 0)     (minus)     {\large --};
        \node[node style] at (2.5, 0)     (plus)     {\large +};
        \node[node style] at (1.25, -2) (zero) {\large 0};

        \draw[every loop,
              auto=right,
              line width=.4mm,
              >=latex,
              draw=blue,
              fill=blue]
            (minus)     edge[bend right=0,dashed]            node{} (zero)
            (plus)     edge[bend right=0,dashed]            node{}  (zero)
            (plus)     edge[bend left]            node {} (minus)
            (minus)     edge[bend left]            node {} (plus);
        \node[node style] at (4, 0)     (minus)     {\large --};
        \node[node style] at (6.5, 0)     (plus)     {\large +};
        \node[node style] at (5.25, -2) (zero) {\large 0};

        \draw[every loop,
              auto=right,
              line width=.4mm,
              >=latex,
              draw=blue,
              fill=blue]
            (minus)     edge[bend right=0,dashed]            node{} (zero)
            (plus)     edge[bend right=0,dashed]            node{}  (zero)
            (minus)     edge[bend left]            node {} (plus);
        \node[node style] at (8, 0)     (minus)     {\large --};
        \node[node style] at (10.5, 0)     (plus)     {\large +};
        \node[node style] at (9.25, -2) (zero) {\large 0};

        \draw[every loop,
              auto=right,
              line width=.4mm,
              >=latex,
              draw=blue,
              fill=blue]
            (minus)     edge[bend right=0,dashed]            node{} (zero)
            (plus)     edge[bend right=0,dashed]            node{}  (zero);
    \end{tikzpicture}
    \caption{The transition structure of $\Xi$ in the three cases 1--3. The dashed arrows indicate transitions that may have both positive or zero probability.}
    \label{fig11}
\end{figure}
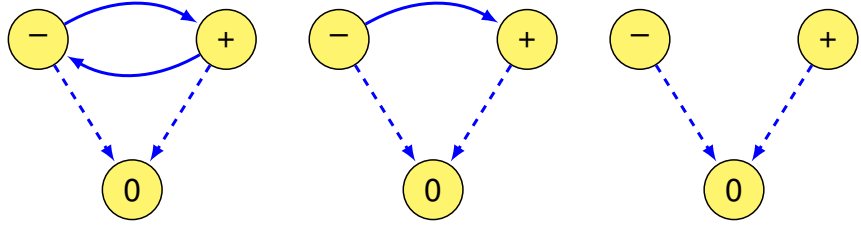

Fundamental tools in our study are
the Cram\'er transform of $P$, defined by
\begin{equation}\label{eq:def P(theta)}
P(\theta)\,:=\,\begin{pmatrix} \pmm(\theta) &\pmp(\theta)\\ \ppm(\theta) &\ppp(\theta) \end{pmatrix}
\,:=\,\begin{pmatrix}
\Erw|\Aminus|^{\theta}\1_{\{\Aminus>0\}} &\Erw|\Aminus|^{\theta}\1_{\{\Aminus<0\}}\\ \Erw|\Aplus|^{\theta}\1_{\{\Aplus<0\}} &\Erw|\Aplus|^{\theta}\1_{\{\Aplus>0\}}
\end{pmatrix}
\end{equation}
for $\theta\in\bbD:=\{\vth\ge 0:\Erw|\Aminus|^{\vth}+\Erw|\Aplus|^{\vth}<\infty\}$, and its dominant eigenvalue (spectral radius) $\rho(\theta)$. They will be discussed in greater detail in Subsection \ref{subsec:Cramer transform}

\vspace{.2cm}
\textit{\bfseries Case 1 (irreducible case)}. $p_{-+}>0$ and $p_{+-}>0$.\\[1mm]
In this case, the dominant eigenvalue $\rho(\theta)$ is associated to the left and right nonnegative eigenvectors
$$ u(\theta)\,=\,(\uminus(\theta),\uplus(\theta))^{\top}\quad\text{and}\quad v(\theta)\,=\,(\vminus(\theta),\vplus(\theta))^{\top}, $$
respectively, uniquely determined by $u(\theta)^{\top}v(\theta)=1$ and $\uplus(\theta)+\uminus(\theta)=1$. This implies that
\begin{equation}\label{eq:def of pihat}
\wh{\pi}(\theta)\,:=\,\big(\uminus(\theta)\vminus(\theta),\uplus(\theta)\vplus(\theta)\big)^{\top}.
\end{equation}
forms a probability distribution. For $\theta>0$, it will later be identified as the stationary law of $(\xi_{n})_{n\ge 0}$ after a suitable change of measure. For $\theta=0$, this is only true when $P$ is stochastic or, equivalently,
\begin{equation}\label{eq:Aeps both not zero}
\Prob[\Aminus=0]\ =\ \Prob[\Aplus=0]\ =\ 0
\end{equation}
holds in which case
\begin{equation}\label{eq:def of pi}
\pi\ =\ \left(\frac{\ppm}{\pmp+\ppm},\frac{\pmp}{\pmp+\ppm}\right)^{\top}\ =\ \wh{\pi}(0)
\end{equation}
equals the unique associated stationary distribution of $\Xi$.

\vspace{.1cm}
The crucial assumption in the subsequent theorem is that
\begin{equation}\label{eq:def of kappa}
\rho(\kappa)\,=\,1\quad\text{for some }\kappa>0.
\end{equation}
We denote by $\cC^{*}(\R)$ the space of bounded Lipschitz functions $\phi$ on $\R$ which vanish in a neighborhood of the origin and by $\cC_{-}^{*}(\R),\,\cC_{+}^{*}(\R)$ the subspaces of those $\phi$ that also vanish on $\IRge,\,\IRle$, respectively.

\begin{Theorem}\label{thm:main case 1}
Assuming \eqref{eq:def of kappa},
\begin{gather}
\Erw|{}^{\delta}\!A|^{\kappa}\log|{}^{\delta}\!A|\,<\,\infty\text{ for $\delta \in \{-,+\}$,}\quad\Erw B^{\kappa}\,<\,\infty\label{eq:moments AlogA B}
\shortintertext{and}
\Prob_{\wh{\pi}(\kappa)}[\log|{}^{\xi_{0}}\!A|-a_{\xi_{1}}+a_{\xi_{0}}\in d\Z]\,<\,1\quad\text{for all }d>0\text{ and }a_{\pm}\in\R,\label{eq:log A nonlattice}
\end{gather}
any stationary distribution $\nu$ of the \textsf{ALIFS} $(X_{n})_{n\ge 0}$ has power tails of order $\kappa$, more precisely
\begin{equation}\label{eq:tails main case 1}
\lim_{t\to\infty}t^{\kappa}\,\nu((t,\infty))\,=\,\Cplus\quad\text{and}\quad\lim_{t\to\infty}t^{\kappa}\,\nu((-\infty,-t))\,=\,\Cminus
\end{equation}
for constants $\Cplus,\Cminus \ge 0$ which are explicitly defined in \eqref{eq:def Cplus case 1},\eqref{eq:def Cminus case 1} and satisfy
\begin{equation}\label{eq:relation Cplusminus}
\uplus(\kappa)\,\Cminus\ =\ \uminus(\kappa)\,\Cplus.
\end{equation}
Furthermore,
\begin{equation}\label{eq:KRT main case 1}
\lim_{t\to\infty}t^{\kappa}\int\phi(t^{-1}x)\ \nu(dx)\ =\ \kappa\int_{0}^{\infty}\frac{\Cminus\phi(-x)+\Cplus\phi(x)}{x^{\kappa+1}}\ dx
\end{equation}
for any $\phi\in\cC^{*}(\R)$.
\end{Theorem}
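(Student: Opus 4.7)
My plan is to adapt Kesten's method for products of random matrices \cite{Kesten:73} to the one-dimensional \ALIFS\ setting, using the two-state chain $\Xi$ as the modulating state space of a Markov renewal equation. The eigenvalue condition $\rho(\kappa)=1$ is exactly what converts the tail equation coming from the stationarity of $\nu$ into a bona fide Markov renewal equation after a Cram\'er tilt. Starting from the invariance $\nu=\nu\circ\Psi^{-1}$, splitting over $\sign(x)=\epsilon$, and replacing $\Psi(x)$ by its linear approximation ${}^{\epsilon}\!A\,x$, an integration by parts in the tail measure $\nu$ yields, for $G_\delta(u):=\nu(\delta(e^u,\infty))$,
\begin{equation*}
G_\delta(u)\ =\ \sum_{\epsilon\in\{-,+\}}\int_\R G_\epsilon(u-s)\,F_{\epsilon\delta}(ds)\ +\ z_\delta(u),
\end{equation*}
with $F_{\epsilon\delta}(ds):=\Prob[\sign({}^{\epsilon}\!A)\epsilon=\delta,\,\log|{}^{\epsilon}\!A|\in ds]$ the natural one-step semi-Markov kernel of $\Xi$ and $z_\delta(u)$ absorbing the linearization error $|\Psi-\Lambda|\le B$ together with the contribution of the bounded-$x$ region of $\nu$.

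To turn this into a stochastic Markov renewal equation, normalize by the positive left eigenvector $u(\kappa)$ of $P(\kappa)$. Setting
\begin{equation*}
\wh K(\delta,\epsilon;ds)\ :=\ \frac{u_\epsilon(\kappa)}{u_\delta(\kappa)}\,e^{\kappa s}\,F_{\epsilon\delta}(ds)\qquad\text{and}\qquad g_\delta(u)\ :=\ \frac{e^{\kappa u}\,G_\delta(u)}{u_\delta(\kappa)},
\end{equation*}
the identities $\rho(\kappa)=1$ and $u(\kappa)^\top P(\kappa)=u(\kappa)^\top$ (giving $\sum_\epsilon u_\epsilon(\kappa)\,p_{\epsilon\delta}(\kappa)=u_\delta(\kappa)$) ensure that $\wh K$ is a stochastic semi-Markov kernel on $\{-,+\}\times\R$ whose embedded chain is the time-reversal of the natural forward tilt and shares the stationary distribution $\wh\pi(\kappa)=(u_\delta(\kappa)v_\delta(\kappa))_\delta$. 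Multiplying the previous display by $e^{\kappa u}/u_\delta(\kappa)$ produces the Markov renewal equation
\begin{equation*}
g_\delta(u)\ =\ \tilde z_\delta(u)\ +\ \sum_{\epsilon}\int_\R g_\epsilon(u-s)\,\wh K(\delta,\epsilon;ds),\qquad \tilde z_\delta(u)\,:=\,\frac{e^{\kappa u}\,z_\delta(u)}{u_\delta(\kappa)}.
\end{equation*}

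At this stage, Case-1 irreducibility, the nonlattice hypothesis \eqref{eq:log A nonlattice}, and the moment hypothesis \eqref{eq:moments AlogA B} verify the hypotheses of the standard Markov renewal theorem: the tilted mean step $\wh\mu:=\sum_\delta\wh\pi_\delta(\kappa)\sum_\epsilon\int s\,\wh K(\delta,\epsilon;ds)$ is finite and strictly positive (positivity coming from strict convexity of $\theta\mapsto\log\rho(\theta)$ together with $\rho(0)=\rho(\kappa)=1$), and $\tilde z_\delta$ is to be shown directly Riemann integrable. The theorem then delivers $g_\delta(u)\to L$ as $u\to\infty$ for some common constant $L\ge 0$ independent of $\delta$, whence $e^{\kappa u}G_\delta(u)\to L\,u_\delta(\kappa)=:C_\delta$; this is \eqref{eq:tails main case 1}, and the proportionality $C_\delta\propto u_\delta(\kappa)$ is precisely the symmetry \eqref{eq:relation Cplusminus}. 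Statement \eqref{eq:KRT main case 1} for $\phi\in\cC^*(\R)$ then follows by a routine approximation argument from the identified tail limits, since indicators of intervals bounded away from $0$ generate $\cC^*(\R)$ by monotone and dominated convergence.

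The principal technical obstacle is the verification of direct Riemann integrability of $\tilde z_\delta$, rather than just integrability. The boundary-crossing events $\{|{}^{\epsilon}\!A\,x\mp e^u|<B\}$, on which $\Psi(x)$ may fall on the opposite side of $\pm e^u$ from $\Lambda(x)$, produce error contributions whose \emph{total mass} is controlled by $\Erw B^\kappa<\infty$; but dRi requires a decreasing majorant that is typically built from the asymptotic linearity combined with \emph{a priori} tail bounds on $\nu$ obtained through an inductive or monotone iteration scheme. This delicate bootstrap between tail regularity of $\nu$ and the applicability of the Markov renewal theorem is the heart of the argument; the remaining orchestration of the MRT on the two-point state space $\{-,+\}$ is then standard.
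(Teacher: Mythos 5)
Your overall strategy --- linearize via $\Lambda$, tilt the two\nobreakdash-state semi-Markov kernel by $e^{\kappa s}$ and an eigenvector of $P(\kappa)$ so that $\rho(\kappa)=1$ turns it into a stochastic kernel, then invoke the Markov renewal theorem with positive drift $\rho'(\kappa)$ --- is exactly the paper's (Sections \ref{sec:transfer operators}--\ref{sec:proof main case 1}). Your left-eigenvector normalization of the ``backward'' kernel is the time-reversal of the paper's harmonic transform by the right eigenvector $v(\kappa)$ (Lemma \ref{lem:Pdelta^kappa case 1}, Prop.~\ref{prop:RThm case 1}); your identifications of $\wh{\pi}(\kappa)$ as the tilted stationary law, of the mean step as $\rho'(\kappa)=u(\kappa)^{\top}P'(\kappa)v(\kappa)$, and of the proportionality $C_{\delta}\propto u_{\delta}(\kappa)$ (hence \eqref{eq:relation Cplusminus}) all match the paper. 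The one structural difference is the order of deduction: you prove the tail limits \eqref{eq:tails main case 1} first, from a renewal equation for the indicator-type quantity $G_{\delta}(u)=\nu(\delta(e^{u},\infty))$, and then recover \eqref{eq:KRT main case 1} by approximation; the paper proves \eqref{eq:KRT main case 1} for Lipschitz $\phi\in\cC^{*}(\R)$ first, via the telescoping potential-series representation \eqref{eq2:potential formula nu(phi)}, and reads off \eqref{eq:tails main case 1} as an immediate corollary.

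That reversal is where your genuine gap sits. The direct Riemann integrability you must verify is that of $\tilde z_{\delta}(u)$, essentially $e^{\kappa u}\big(\Prob[\delta\Psi(R)>e^{u}]-\Prob[\delta\Lambda(R)>e^{u}]\big)$, and the obstruction there is not decay --- which $\Erw B^{\kappa}<\infty$ together with the a priori bound $\Erw|R|^{\theta}<\infty$ for $\theta<\kappa$ does control --- but the possible oscillation of a difference of two tail functions: for indicators you have no modulus of continuity, and Goldie's treatment of precisely this situation requires an additional smoothing operator plus a de-smoothing lemma exploiting monotonicity of the tails. You supply neither that machinery nor a substitute; your proposed ``decreasing majorant from asymptotic linearity and a priori tail bounds'' addresses the wrong difficulty. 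The paper circumvents the problem entirely by working with $\phi\in\cC^{*}(\R)$ from the outset: its Lemma \ref{lem:dRi case 1} builds the dRi majorant from the bound $|\phi(x)-\phi(y)|\le\big(\Lip(\phi)|x-y|\big)\wedge\big(2\|\phi\|_{\infty}\big)$ on $\{|x|\vee|y|\ge K_{\phi}\}$ combined with $|\Psi(R)-\Lambda(R)|\le B$, and this genuinely uses the Lipschitz constant of the test function. So either import Goldie's smoothing/de-smoothing into the Markov-modulated setting, or reorder the argument as the paper does (smooth test functions first, tails second). You should also make the moment bound $\Erw|R|^{\theta}<\infty$, $\theta<\kappa$, explicit rather than implicit --- the paper's Lemma \ref{lem:moments of R} derives it from the comparison $|\wh{X}_{n}-\wh{Z}_{n}|\le\wh{Y}_{n}$ and $\rho(\theta)<1$ --- since every candidate majorant for the error term needs it.
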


Further information on the lattice-type Condition \eqref{eq:log A nonlattice} will be provided later, see Subsect.~\ref{subsec:MRT}.

Observe that the above result concerns the asymptotic behavior of the tail of the stationary measure. Our proof, which is based on a renewal theorem, does not entail the positivity of the limiting constants $\Cplus,\Cminus$. However, we will be able to verify this in Section \ref{sec:positivity} under the additional assumption that the stationary measure has unbounded support (see Proposition \ref{prop:tail constants positive}).

\vspace{.2cm}
\textit{\bfseries Case 2 (unilateral case)}. $p_{-+}>0$ and $p_{+-}=0$.\\[1mm]
In this case, the crucial numbers, if they exist, are $\kappaminus,\kappaplus>0$ defined by
\begin{equation*}
\pmm(\kappa_{-})\,=\,1\quad\text{and}\quad\ppp(\kappaplus)\,=\,1.
\end{equation*}
As a substitute for Condition \eqref{eq:moments AlogA B}, we need that either
\begin{gather}
\Erw|\Aminus|^{\kappa}\log|\Aminus|\,<\,\infty\quad\text{and}\quad\Erw B^{\kappa}\,<\,\infty\label{eq:moments AlogAminus B}
\shortintertext{or}
\Erw|\Aplus|^{\kappa}\log|\Aplus|\,<\,\infty\quad\text{and}\quad\Erw B^{\kappa}\,<\,\infty.\label{eq:moments AlogAplus B}
\end{gather}

\begin{Theorem}\label{thm:main case 2}
(a) If $\kappaminus$ exists, Condition \eqref{eq:moments AlogAminus B} holds for $\kappa=\kappaminus$ and $\P[\log|\Aminus|\in\cdot|\Aminus>0]$ is nonarithmetic, then any stationary distribution $\nu$ satisfies
\begin{gather}
\lim_{t\to\infty}t^{\kappaminus}\,\nu((-\infty,-t))\,=\,\Cminus\label{eq:left tails main case 2}
\shortintertext{as well as}
\lim_{t\to\infty}t^{\kappaminus}\int\phi(t^{-1}x)\ \nu(dx)\ =\ \kappaminus\int_{0}^{\infty}\frac{\Cminus\phi(-x)}{x^{\kappaminus+1}}\ dx\label{eq:KRT1 main case 2}
\end{gather}
for $\phi\in\cC_{-}^{*}(\R)$, where $\Cminus$ is defined in \eqref{Cminus Thm 2.2(a)}.

\vspace{.1cm}
(b) If $\kappaplus$ exists, $\pmm(\kappaplus)<1$ (thus $\kappaminus$, if it exists, is greater than $\kappaplus$), $\pmp(\kappaplus)<\infty$, Condition \eqref{eq:moments AlogAplus B} holds for $\kappa=\kappaplus$ and $\P[\log|\Aplus|\in\cdot|\Aplus>0]$ is nonarithmetic, then any stationary distribution $\nu$ satisfies
\begin{gather}
\lim_{t\to\infty}t^{\kappaplus}\,\nu((t,\infty))\,=\,\Cplus\label{eq:right tails main case 2A}
\shortintertext{as well as}
\lim_{t\to\infty}t^{\kappaplus}\int\phi(t^{-1}x)\ \nu(dx)\ =\ \kappaplus\int_{0}^{\infty}\frac{\Cplus\phi(x)}{x^{\kappaplus+1}}\ dx\label{eq:KRT2 main case 2}
\end{gather}
for $\phi\in\cC^{*}(\R)$, where $\Cplus$ is defined in \eqref{Cplus Thm 2.2(b)}.

\vspace{.1cm}
(c) If $\kappaminus$ exists, $\ppp(\kappaminus)<1$ (thus $\kappaminus<\kappaplus$ if the latter exists as well), $\ppp(\theta)<1$ and $\pmp(\theta)<\infty$ for some $\theta> \kappaminus$, Condition \eqref{eq:moments AlogAminus B} holds for $\kappa=\kappaminus$ and $\IPminus[\log|\Aminus|\in\cdot|\Aminus>0]$ is nonarithmetic, then any stationary distribution $\nu$ satisfies
\begin{gather}
\lim_{t\to\infty}t^{\kappaminus}\,\nu((t,\infty))\ =\ \Cminplus\label{eq:right tails main case 2B}
\shortintertext{as well as}
\lim_{t\to\infty}t^{\kappaminus}\int\phi(t^{-1}x)\ \nu(dx)\ =\ \kappaminus\int_{0}^{\infty}\frac{\Cminus\phi(-x)+\Cminplus\phi(x)}{x^{\kappaminus+1}}\ dx\label{eq:KRT3 main case 2}
\end{gather}
for $\phi\in\cC^{*}(\R)$, where $\Cminus,\,\Cminplus$ are defined in \eqref{Cminus Thm 2.2(a)} and \eqref{Cminplus Thm 2.2(c)}, respectively.
\end{Theorem}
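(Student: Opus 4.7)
The approach is to instantiate the Kesten--Goldie--Markov framework developed in the preceding sections to the unilateral structure of Case 2, where
\[
P(\theta)\ =\ \begin{pmatrix}\pmm(\theta) & \pmp(\theta)\\ 0 & \ppp(\theta)\end{pmatrix}
\]
is upper triangular with eigenvalues $\pmm(\theta)$ and $\ppp(\theta)$. The three parts of the theorem correspond to the three distinct ways one of these eigenvalues can equal $1$, and one tilts the underlying Markov random walk $(\xi_n,S_n)_{n\ge 0}$, with $S_n:=\sum_{k=1}^n\log|{}^{\xi_{k-1}}\!A_k|$ tracking $\log|\Lambda_n\cdots\Lambda_1(\xi_0)|$, at that eigenvalue using the associated right eigenvector of $P(\theta)$ to define the change of measure. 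After the tilt, the Goldie-type invariance identity
\[
\int\phi(t^{-1}x)\,\nu(dx)\ =\ \int\Erw\,\phi\bigl(t^{-1}\Psi(x)\bigr)\,\nu(dx),
\]
iterated along a trajectory of $\Xi$ and combined with the uniform bound $|\Psi(x)-\Lambda(x)|\le B$ from \eqref{eq:def2 AL}, reduces the tail asymptotics to a Markov Key Renewal Theorem for the tilted walk. Errors from replacing $\Psi$ by $\Lambda$ are controlled by $\Erw B^{\kappa}<\infty$ together with the $\kappa$-moment estimates for $\nu$ available from the Cram\'er-transform analysis of Subsection~\ref{subsec:Cramer transform}.

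For part (a), the tilt is at $\pmm(\kappaminus)=1$ with right eigenvector $(1,0)^{\top}$, so the tilted chain is trapped in $-$ and $S_n$ becomes an ordinary random walk with increment distributed as $\log|\Aminus|$ conditioned on $\{\Aminus>0\}$, having positive drift by convexity of $\theta\mapsto\pmm(\theta)$. Finiteness of the drift is ensured by $\Erw|\Aminus|^{\kappaminus}\log|\Aminus|<\infty$, while the stated nonarithmeticity assumption unlocks the classical Key Renewal Theorem; direct Riemann integrability of the test integrand follows from the Lipschitz/compact-support assumption on $\phi\in\cC_{-}^{*}(\R)$ together with $\Erw B^{\kappaminus}<\infty$, yielding \eqref{eq:KRT1 main case 2}, and \eqref{eq:left tails main case 2} then follows by sandwiching $\1_{(-\infty,-1)}$ between bounded Lipschitz bumps in $\cC_{-}^{*}(\R)$. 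Part (b) runs analogously, except that the dominant eigenvalue at $\kappaplus$ is $\ppp(\kappaplus)=1$ with right eigenvector $v=\bigl(\pmp(\kappaplus)/(1-\pmm(\kappaplus)),\,1\bigr)^{\top}$, which is exactly why the hypotheses $\pmm(\kappaplus)<1$ and $\pmp(\kappaplus)<\infty$ are imposed. Under this tilt, a chain started from $-$ reaches $+$ in finite time a.s.\ and is absorbed there; the tilted walk then has positive drift $\ppp'(\kappaplus)$, and the renewal pipeline produces \eqref{eq:KRT2 main case 2} with a right-hand side supported on $(0,\infty)$.

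Part (c) is the most delicate and carries the main conceptual obstacle of the proof. The tilt is again at $\pmm(\kappaminus)=1$ with right eigenvector $(1,0)^{\top}$, but now $\ppp(\kappaminus)<1$, so the right tail of $\nu$ is not explained by a $+$-exponent but by the heavier left tail leaking into $(0,\infty)$: each sign-flip $\xi_{k-1}=-\to\xi_k=+$, which occurs precisely when $\Aminus_k<0$ acts on a large negative state, maps a large negative value to a large positive one, after which the subsequent $+$-evolution, being subcritical at $\kappaminus$, merely preserves the tail exponent. Decomposing the Goldie identity along the first $-\to+$ transition and summing over the prior $-\to-$ steps produces a geometric-type series whose summability past $\kappaminus$ is ensured by the extra hypotheses $\ppp(\theta)<1$ and $\pmp(\theta)<\infty$ for some $\theta>\kappaminus$. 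The hard part is to show that all excursion contributions assemble into a single direct Riemann integrable function on the $-$-walk, so that a single application of the renewal theorem delivers \eqref{eq:KRT3 main case 2}; this requires uniform control over $+$-excursions of every rank, which the supercritical moment $\theta>\kappaminus$ precisely provides. The constant $\Cminplus$ then emerges as the cumulative tilted expectation of these excursion contributions.
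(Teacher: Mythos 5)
Your proposal matches the paper's argument for parts (b) and (c): the same telescoping of the invariance identity of $\nu$ against the linearization $\Lambda$ (controlled by $|\Psi-\Lambda|\le B$ and the moment bounds on $R$), the same decomposition of $\phi\star\nu$ into a defective $-$-renewal part, a flip term, and a $+$-renewal part, the same exponential tilt at the eigenvalue equal to $1$, and the same identification of the key technical point in (c), namely that the composite excursion function $(g*\IUminplus)_{\kappa}$ must be shown directly Riemann integrable, which is exactly where the hypotheses $\ppp(\theta)<1$ and $\pmp(\theta)<\infty$ for some $\theta>\kappaminus$ enter. For part (a) the paper simply invokes Goldie's implicit renewal theorem (Cor.~2.4 of \cite{Goldie:91}) applied to $\wtil{A}=\Aminus\vee 0$, which is in substance the direct tilt-plus-key-renewal argument you describe, so the approaches coincide there as well.
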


Observe that if both $\kappaminus$ and $\kappaplus$ exist with $\kappaminus > \kappaplus$, then cases (a) and (b) entail that the stationary measure behaves regularly at $+\infty$ and $-\infty$, but with different tail decay rates.

\vspace{.1cm}
The unilateral case shares several features with the study of the stationary distribution of the two-dimensional recursive Markov chain defined by the affine recursions $\Psi_{n}(x)=A_{n}x+B_{n}$ in the case when the $A_{n}$ are upper triangular matrices. Such models have attracted some interest in recent years due to their relevance for some models in econometrics, see
\cite{DMS19}.

\vspace{.1cm}
Regarding the case $\kappa_{+}=\kappa_{-}$, we further remark that the methods used in the present paper are not strong enough and thus need to be refined. It seems reasonable to believe that in this case a first order expansion of $P(\theta)$ in $\kappa$ is required with a possible extra term in the power tail of $\nu$, see again \cite{DZ18} for similar considerations in the case of the afore-mentioned two-dimensional recursions.

\vspace{.2cm}
\textit{\bfseries Case 3 (separated case)}. $\pmp=0$ and $\ppm=0$.\\[1mm]
This is the easiest case and can be treated by Goldie's implicit renewal theory \cite{Goldie:91}. We state the result here for completeness and put
$$ C_{\delta}\,:=\,\frac{\Erw\big[|\Psi(R)|^{\kappa_{\delta}}\1_{\{\Psi(R)<0\}}-|\Adelta R|^{\kappa_{\delta}}\1_{\{\delta R>0\}}\big]}{\kappa_{\delta}\,p_{\delta\delta}'(\kappa_{\delta})} $$
for $\delta\in\{-,+\}$.

\begin{Theorem}\label{thm:main case 3}
(a) If $\kappaminus$ exists, Condition \eqref{eq:moments AlogAminus B} holds for $\kappa=\kappaminus$ and $\log|\Aminus|$ is nonarithmetic, then any stationary distribution $\nu$ satisfies
\begin{align}\label{eq1:main case 3}
\lim_{t\to\infty}t^{\kappaminus}\,\nu((-\infty,-t))\,=\,\Cminus
\end{align}
where $R$ has law $\nu$ and is independent of $\Psi,\Aminus$. Moreover, \eqref{eq:KRT1 main case 2} holds for any $\phi\in\cC_{-}^{*}(\R)$ and with $\Cminus$ as in \eqref{eq1:main case 3}.

\vspace{.1cm}
(b) If $\kappaplus$ exists, Condition \eqref{eq:moments AlogAplus B} holds for $\kappa=\kappaplus$ and $\log|\Aplus|$ is nonarithmetic, then any stationary distribution $\nu$ on $\IRg$ satisfies
\begin{align}\label{eq2:main case 3}
\lim_{t\to\infty}t^{\kappaplus}\,\nu((t,\infty))\,=\,\Cplus
\end{align}
here $R$ has law $\nu$ and is independent of $\Psi,\Aplus$. Moreover, \eqref{eq:KRT2 main case 2} holds for any $\phi\in\cC_{+}^{*}(\R)$ and with $\Cplus$ as in \eqref{eq2:main case 3}.
\end{Theorem}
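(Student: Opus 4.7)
The plan is to exploit the fact that in the separated case $\Aminus,\Aplus\geq 0$ almost surely, so the linearization $\Lambda$ preserves sign and the states $+$ and $-$ of the chain $\Xi$ do not communicate; no Markov modulation is therefore needed, and on each halfline we find ourselves exactly in the scalar setting of Goldie's implicit renewal theorem \cite{Goldie:91}, with slope $\Aminus$ on $\IRl$ and $\Aplus$ on $\IRg$. For part (b), where $\nu$ is supported on $\IRg$, stationarity forces $\Psi(R)>0$ a.s.\ for $R\sim\nu$, and on positive inputs one has $|\Psi(x)-\Aplus x|\leq B$ with $\Aplus\geq 0$ and $\Erw\Aplus^{\kappaplus}=\ppp(\kappaplus)=1$. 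Combined with \eqref{eq:moments AlogAplus B} and the nonarithmeticity of $\log\Aplus$, this is precisely the setting of Goldie's theorem for the positive fixed-point equation $R\eqdist\Psi(R)$, which applies verbatim to yield \eqref{eq2:main case 3} with the Goldie tail constant matching the stated $\Cplus$; the integral statement \eqref{eq:KRT2 main case 2} on $\cC_+^*(\R)$ then follows by integrating this asymptotic against $\phi$, as in Goldie.

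For part (a), $\nu$ may charge both halflines, but only its negative part contributes to the left tail at leading order. I would split the stationarity identity
\begin{equation*}
\nu((-\infty,-t))\ =\ \int_{\IRl}\Prob(\Psi(x)<-t)\,\nu(dx)\,+\,\int_{[0,\infty)}\Prob(\Psi(x)<-t)\,\nu(dx),
\end{equation*}
and observe that for $x\geq 0$ one has $\Psi(x)\geq\Aplus x-B\geq -B$, so the second integral is at most $\Prob(B>t)\,\nu([0,\infty))=o(t^{-\kappaminus})$ thanks to $\Erw B^{\kappaminus}<\infty$. In the first integral, the substitution $y=-x$ and the reflected random function $\bar\Psi(y):=-\Psi(-y)$, which satisfies $|\bar\Psi(y)-\Aminus y|\leq B$ on $\IRg$, convert the equation into an approximately invariant equation on $\IRg$ for the push-forward of $\nu|_{\IRl}$, with slope $\Aminus\geq 0$. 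Up to negligible $o(t^{-\kappaminus})$ corrections (again controlled by $\Erw B^{\kappaminus}<\infty$), this falls inside Goldie's framework: passing to logarithmic scale and applying the ordinary renewal theorem with nonarithmetic step $\log\Aminus$ yields \eqref{eq1:main case 3} with $\Cminus$ as stated, and \eqref{eq:KRT1 main case 2} on $\cC_-^*(\R)$ then follows as in part (b).

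The hard part will be to verify that the cross-over corrections—mass of $\nu$ sent by $\Psi$ from one halfline to the other through the additive noise $B$—translate, after the log-exponential change of variables, into a directly Riemann integrable inhomogeneity in the renewal equation, so that the renewal theorem applies unchanged. This is careful bookkeeping built on $\Erw B^{\kappaminus}<\infty$ and on the uniform boundedness of the perturbation of $\Psi$ by $B$ in $x$; crucially, it does not require any of the Markov renewal machinery developed for cases 1 and 2.
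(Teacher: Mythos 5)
Your overall route is the paper's: in the separated case no Markov modulation is needed and everything reduces to Goldie's one-dimensional implicit renewal theory. That is exactly how the paper argues — it invokes Goldie's Corollary~2.4 with the one-sided comparison $\Erw\bigl||\Psi(R)\wedge 0|^{\kappaminus}-|\Aminus R\wedge 0|^{\kappaminus}\bigr|<\infty$, which follows from $\sup_{x\in\R}|\Psi(x)\wedge 0-\Aminus x\wedge 0|\le B$ together with $\Erw B^{\kappaminus}<\infty$ and the moment bound on $R$ from Lemma~\ref{lem:moments of R} (part (b) is handled the same way after a conjugation onto $[1,\infty)$). Your part (b) is fine.

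The gap is in part (a), where you discard the cross-over term $\int_{[0,\infty)}\Prob(\Psi(x)<-t)\,\nu(dx)$ as a ``negligible $o(t^{-\kappaminus})$ correction''. Pointwise $o(t^{-\kappa})$ decay is not a license to drop a term from a renewal equation: after the logarithmic change of variables the left tail is a convolution of an inhomogeneity with the exponentially tilted renewal measure of $\log\Aminus$, and every directly Riemann integrable piece of that inhomogeneity contributes its full integral to the limit. The cross-over mass is reinjected at every iteration of the stationarity identity, so its accumulated contribution is of exact order $t^{-\kappaminus}$, not smaller. Concretely, the correct numerator of $\Cminus$ is $\Erw\big[|\Psi(R)|^{\kappaminus}\1_{\{\Psi(R)<0\}}-|\Aminus R|^{\kappaminus}\1_{\{R<0\}}\big]$ — note the indicator $\{\Psi(R)<0\}$ rather than $\{R<0,\,\Psi(R)<0\}$ — so the event $\{R\ge 0,\,\Psi(R)<0\}$ adds $\Erw\big[|\Psi(R)|^{\kappaminus}\1_{\{R\ge 0,\,\Psi(R)<0\}}\big]/\big(\kappaminus\,\pmmprime(\kappaminus)\big)$ to the constant, which is in general strictly positive (it vanishes only if $\Psi$ a.s.\ never maps $[0,\infty)$ into $\IRl$). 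Your closing paragraph, which asks that these corrections become a dRi inhomogeneity ``so that the renewal theorem applies unchanged'', is in tension with calling them negligible: if they are kept in the inhomogeneity they change the constant; if they are dropped the constant is wrong. The clean fix is not to split the stationarity identity at all but to apply Goldie's Theorem~2.3/Corollary~2.4 directly to the pair $\big(\Psi(R)\wedge 0,\ \Aminus R\wedge 0\big)$ as above; the cross-over mass is then automatically part of the comparison and of $\Cminus$, and the dRi bookkeeping you flag as the hard part is precisely what Goldie's smoothing argument already performs.
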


In any of the three cases the conditions \eqref{eq:def of kappa}  and \eqref{eq:moments AlogA B} ensure the existence of at least one stationary distribution of $(X_{n})_{n\ge 0}$. We refer to Section \ref{sec:existence} for details.

\subsection{Examples}\label{subsec:examples}

Let us briefly discuss some examples of \ALIFS\ that have appeared in the literature and whose stationary distributions exhibit a tail behavior that, under appropriate conditions, can be read off from our main results. In order to keep this presentation short, we refrain from giving any technical details. Further applications with a more thorough discussion can be found in
\cite{BroBura:13}.

\subsubsection{\ARCH(1)}  Our first example, the \emph{autoregressive conditional heteroskedasticity model of order one}, is well-known in econometrics and usually defined by the pair of recursive equations
$$ X_{n}\,=\,\Sigma_{n}Z_{n}\quad\text{and}\quad\Sigma_{n}^{2}\,=\,\beta+\lambda X^{2}_{n-1}, $$
where $(Z_{n})_{n\ge 1}$ denotes a sequence of i.i.d.~random variables with mean zero and variance one (the noise) and $\beta,\lambda$ are positive parameters. Simple inspection shows that this entails the recursive relation $X_{n}=\Psi_{n}(X_{n-1})$ for $n\ge 1$ and
i.i.d.~copies $\Psi_{1},\Psi_{2},\ldots$ of the random function
$$ \Psi(x)\,=\,Z\sqrt{\beta+\lambda x^{2}}. $$
As one can also readily check, $(X_{n})_{n\ge 0}$ forms an \ALIFS~which satisfies \eqref{eq:def2 AL} with $\Apm_{n}=\pm Z_{n}\sqrt{\lambda_{n}}$ and is irreducible (Case 1). Unless $Z$ has a symmetric law, the constants $C_{+}, C_{-}$ defined in \eqref{eq:tails main case 1} are generally distinct. Let us also point out here that $(X_{n})_{n\ge 0}$ remains an \ALIFS~if the parameters $\beta_{n}$ and $\lambda_{n}$ are allowed to be random.

\subsubsection{\AR(1) models with \ARCH(1) errors} This extension of the previous example is obtained by adding an extra linear term and therefore defined as the \ALIFS\ generated by i.i.d. copies of the random function
$$ \Psi(x)\,=\,\alpha x+Z\left(\beta+\lambda x^{2}\right)^{1/2} $$
for some $(\alpha,\beta,\lambda)\in\R\times\IRg^{2}$ and a random variable $Z$ as before. It satisfies \eqref{eq:def2 AL} with $\Apm=\alpha\pm Z\sqrt{\lambda}$. Depending on the parameters $\alpha,\lambda$ and the almost sure range of $Z$, all three cases introduced above may occur. We will return to this example in Section \ref{sec:AR(1) with ARCH errors} at the end of this work.

\subsubsection{\IFS\ on the unit interval} Consider an \IFS~generated by i.i.d.~copies of a random continuous self-map $\Phi$ of the unit interval $[0,1]$ which further satisfies $\Phi((0,1))\subseteq (0,1)$. If $\Phi$ is twice differentiable at $0$ and $1$, this \IFS\ can be conjugated to obtain an \ALIFS\ of the real line. Namely, by taking the diffeomorphism $r$ of $(0,1)$ onto $\R$, defined by
$$ r(u)\,:= \,-\frac{1}{u} + \frac{1}{1-u}, $$
the conjugated function $\Psi =  r\circ \Phi \circ r^{-1}  $ satisfies \eqref{eq:def2 AL} with
$$ \Aminus\,=\,\begin{cases}
\frac{1}{\Phi'(0)}&\text{if }\Phi(0)=0\text{ or }1\\
\hfill 0&\text{if } \Phi(0)\in(0,1)
\end{cases}\quad\text{and}\quad
      \Aplus\,=\,\begin{cases}
\frac{1}{\Phi'(1)}&\text{if } \Phi(1)=0\text{ or }1\\
\hfill 0&\text{if } \Phi(1)\in(0,1)
\end{cases}, $$
(see Section 6.3 in \cite{BroBura:13} for more details).
Note further that $\nu$ is an invariant distribution for the \IFS~generated by $\Phi$ (i.e.~$\Phi(X)\eqdist\nu$ if $X\eqdist\nu$, where $\eqdist$ means equality in law) iff $r*\nu$ is invariant for the \ALIFS~generated by $\Psi$.
Thus, under appropriate hypotheses, this system possesses a stationary distribution whose behaviour close to the boundaries of the interval can be deduced from our main results.
As a particular instance which has received some attention in the literature, we mention here the random logistic transform $\Phi(x)=Ax(1-x)$ with $0<A<4$ a.s., $\Aminus=1/\alpha$ and $\Aplus=1/\alpha$, see e.g. \cite{AthreyaDai:00}.

\subsection{Structure of the paper}

The principal goal of this work is to describe the tail behavior of a stationary measure $\nu$ of an \ALIFS\ at $\pm\infty$. In Sections \ref{sec:prerequisites} and \ref{sec:transfer operators}, we provide the indispensable tools to prove our main results. Then we prove the existence of the limits in Theorems \ref{thm:main case 1}, \ref{thm:main case 2} and \ref{thm:main case 3} in Sections \ref{sec:proof main case 1}, \ref{sec:proof main case 2} and \ref{sec:proof main case 3}, respectively. It will be seen that the nondegeneracy of the limits, i.e., the positivity of the limiting constants requires different arguments and in fact forms a separate problem. This question is postponed until Section \ref{sec:positivity} and there taken care of in Proposition \ref{prop:tail constants positive}.  In Section \ref{sec:existence}, we give conditions for the existence of at least one stationary distribution $\nu$ (see Prop.~\ref{prop:stationary distribution}) which are directly seen to be valid in our main theorems. Uniqueness of $\nu$ will not be discussed here, because this requires geometric arguments and a local analysis of the process  which in such generality is beyond the scope of this work. The AR(1) model with GARCH errors is an example which has received some interest in the literature \cite{Alsmeyer:16,BorkovecKl:01,GueDie:94,Maercker:97} and to which our results can be applied, in fact with all three cases being possible. It will therefore be discussed in greater detail in the final Section \ref{sec:AR(1) with ARCH errors}, followed by a short appendix containing a technical lemma about the maximal eigenvalue $\rho(\theta)$ in a right neighborhood of 0.


\section{Prerequisites}\label{sec:prerequisites}

\subsection{The induced Markov random walk}\label{subsec:main results}

Defining ${S_{0}}:=\log|X_{0}|$ and
\begin{equation*}
 S_{n}\,:=\,\log |\Lambda_{n}\cdots\Lambda_{1}(X_{0})|
\end{equation*}
for $n\ge 1$ (with the usual convention $\log 0:=-\infty$), we see that, given $\Xi$, the increments $\zeta_{n}:=S_{n}-S_{n-1}=\log|\Lambda_{n}(\xi_{n-1})|$, $n=1,2,,\ldots$, are conditionally independent and
\begin{align*}
\Prob[\zeta_{n}\in\cdot|\Xi,\xi_{n-1}=\delta,\xi_{n}=\epsilon]\ &=\ \Prob[\log|\Lambda_{n}(\xi_{n-1})|\in\cdot|\xi_{n-1}=\delta,\xi_{n}=\epsilon]\\
&=\ \Prob[\log|\Adelta|\in\cdot|\delta\cdot\sign(\Adelta)=\epsilon].
\end{align*}
Equivalently, $(\xi_{n},\zeta_{n})_{n\ge 0}$ forms a Markov chain such that the conditional law of $(\xi_{n},\zeta_{n})$ given the past depends on $\xi_{n-1}$ only. The transition kernel equals
\begin{equation*}
Q(\delta,\{\epsilon\}\times B)\ =\ \Prob[\delta\cdot\sign(\Adelta)= \epsilon,\,\log|\Adelta|\in B]
\end{equation*}
for measurable $B\subset\R$, $\delta\in\bbS^{0}$ and $\epsilon \in \cS$.  
If $\delta=0$, then $Q(\delta,\cdot)$ equals Dirac measure at $\dagger:=(0,-\infty)$. In other words, $\dagger$ is an absorbing state for $(\xi_{n},\zeta_{n})_{n\ge 0}$ and should be viewed as a grave. It follows that $(\xi_{n},S_{n})_{n\ge 0}$ does indeed constitute a Markov random walk (\MRW) with discrete driving chain $\Xi$ and induced by $(\Lambda_{n}\cdots\Lambda_{1}(X_{0}))_{n\ge 1}$. However, it may be absorbed at $\dagger$ in finite time (explosion of the additive part). On the other hand, the conditions in our main Theorems \ref{thm:main case 1}--\ref{thm:main case 3} ensure that, after a suitable change of measure $\Prob\rightsquigarrow\widehat{\Prob}$ to be decribed in Subsection \ref{subsec:measure change}, the driving chain has state space $\bbS^{0}$ and explosion does no longer occur. The relevant renewal-theoretic properties of the \MRW\ after this measure change, which is essential for the analysis of the tails of the stationary distributions of $(X_{n})_{n\ge 0}$, will be discussed in Subsection \ref{subsec:MRT}.

\subsection{Standard model}\label{subsec:2.2}

It is convenient to assume a standard model
$$ (\Omega,\fA,(\Prob_{x})_{x\in\R},(\xi_{n},S_{n})_{n\ge 0}), $$
where $(\Omega,\fA)$ denotes the measurable space on which all occurring random variables are defined and $\Prob_{x}:=\Prob[\cdot|X_{0}=x]$, thus
$$ \Prob_{x}[X_{0}=x,\xi_{0}=\sign(x),S_{0}=\log|x|]\ =\ 1 $$
for all $x\in\R$. The definition extends the one given before in Thm.~\ref{thm:main case 1} in a compatible way because $\Prob_{\delta}[\xi_{0}=\delta]=1$ if $\delta\in\cS$. Moreover, we put $\Prob_{\chi}:=\int\Prob_{x}\,\chi(dx)$ for any measure $\chi$ on $\R$ and use $\Prob$ for probabilities that do not depend on initial conditions.

\subsection{Associated \LIFS}\label{sec:associated LIFS}

The fact that $\Psi_{1},\Psi_{2},\ldots$ are i.i.d. implies that, for each $n\in\N_{0}$, the forward iteration $X_{n}$ and the backward iteration $\wh{X}_{n}:=\Psi_{1}\cdots\Psi_{n}(X_{0})$ have the same distribution, more precisely
\begin{equation}\label{eq:X_{n} eqdist hatX_{n}}
X_{n}\ =\ \Psi_{n}\cdots\Psi_{1}(X_{0})\ \eqdist\ \Psi_{1}\cdots\Psi_{n}(X_{0})\ =\ \wh{X}_{n}\quad\text{under }\Prob_{x}
\end{equation}
for each $n\in\N_{0}$. By a similar argument and in analogy with \eqref{eq:X_{n} eqdist hatX_{n}},
\begin{equation*}
\Lambda_{n}\cdots\Lambda_{1}(x)\ \eqdist\ \Lambda_{1}\cdots\Lambda_{n}(x)\quad\text{under }\Prob_{x}
\end{equation*}
for all $n\in\N$ and $x\in\R$.

\vspace{.1cm}
The following simple but crucial lemma is a consequence of Condition \eqref{eq:def2 AL}. Given a Lipschitz continuous function $f$, let $\Lip(f)$ be its Lipschitz constant. Note that $\Lip(\Lambda)=|\Aminus|\vee|\Aplus|$. Further putting $\Phi_{n}(x):=\Lip(\Lambda_{n})x+B_{n}$ for $n\in\N$, we introduce the \LIFS\  $(Z_{n})_{n\ge 0}$ and the associated "error term" $(Y_{n})_{n\ge 0}$ by setting $Y_{0}:=0$, $Z_{0}:=X_{0}$,
$$ Z_{n}\,:=\,\Lambda_{n}\cdots\Lambda_{1}(Z_{0})\quad\text{and}\quad Y_{n}\,:=\,\sum_{k=1}^{n}\Lip(\Lambda_{n}\ldots \Lambda_{k+1})B_{k} $$
for $n\ge 1$. The corresponding backward iterations are
\begin{equation}\label{eq:hatY_{n}}
\wh{Z}_{n}\,:=\,\Lambda_{1}\cdots\Lambda_{n}(Z_{0})\quad\text{and}\quad\wh{Y}_{n}\,:=\,\sum_{k=1}^{n}
\Lri_{k-1}B_{k},
\end{equation}
where $ \Lri_{k}\,:=\,\Lip(\Lambda_{1}\cdots\Lambda_{k})$
for $k\in\N$ and $\Lri_{0}:=1$.

\begin{Lemma}\label{lem:comparison with LIFS}
If Condition \eqref{eq:def2 AL} holds true, then
\begin{gather}
\sup_{x\in\R}|\Psi_{n}\cdots\Psi_{1}(x)-\Lambda_{n}\cdots\Lambda_{1}(x)|\ \le\ Y_{n},\label{eq:AL forward}\\
\sup_{x\in\R}|\Psi_{1}\cdots\Psi_{n}(x)-\Lambda_{1}\cdots\Lambda_{n}(x)|\ \le\ \wh{Y}_{n},\label{eq:AL backward}\\
\shortintertext{and in particular}
|X_{n}-Z_{n}|\ \le\ Y_{n}\quad\text{and}\quad |\wh{X}_{n}-\wh{Z}_{n}|\ \le\ \wh{Y}_{n}\label{eq:AL backward/forward}
\end{gather}
for all $n\in\N$.
\end{Lemma}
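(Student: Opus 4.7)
The plan is to prove both inequalities \eqref{eq:AL forward} and \eqref{eq:AL backward} by induction on $n$, using repeatedly the two facts at our disposal: the pointwise bound $\sup_{x}|\Psi_k(x)-\Lambda_k(x)|\le B_k$ coming from \eqref{eq:def2 AL}, and the Lipschitz property $\Lip(\Lambda_k)=|\Aminus_k|\vee|\Aplus_k|$, together with the submultiplicativity $\Lip(f\circ g)\le\Lip(f)\,\Lip(g)$ of Lipschitz constants under composition. The base case $n=1$ is just the standing assumption, since $Y_1=\wh{Y}_1=B_1$.

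For the forward bound \eqref{eq:AL forward}, I would write, for fixed but arbitrary $x\in\R$,
\[
|\Psi_n\cdots\Psi_1(x)-\Lambda_n\cdots\Lambda_1(x)|\ \le\ |\Psi_n(u)-\Lambda_n(u)|\,+\,|\Lambda_n(u)-\Lambda_n(v)|
\]
with $u:=\Psi_{n-1}\cdots\Psi_1(x)$ and $v:=\Lambda_{n-1}\cdots\Lambda_1(x)$. The first term is bounded by $B_n$ uniformly in $u$; the second, by Lipschitz continuity of $\Lambda_n$, by $\Lip(\Lambda_n)|u-v|$, and the inductive hypothesis applied to $u-v$ gives $\le\Lip(\Lambda_n)Y_{n-1}$. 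Thus the whole left-hand side is $\le B_n+\Lip(\Lambda_n)Y_{n-1}$, and a telescoping identity (or simple rearrangement) using $\Lip(\Lambda_n\cdots\Lambda_{k+1})\le\Lip(\Lambda_n)\Lip(\Lambda_{n-1}\cdots\Lambda_{k+1})$ identifies this expression with $Y_n=\sum_{k=1}^{n}\Lip(\Lambda_n\cdots\Lambda_{k+1})B_k$. Taking the supremum over $x$ completes the induction.

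For the backward bound \eqref{eq:AL backward}, the argument is symmetric but decomposes the composition the other way. Setting $u:=\Psi_n(x)$ and $v:=\Lambda_n(x)$, I would apply the triangle inequality
\[
|\Psi_1\cdots\Psi_n(x)-\Lambda_1\cdots\Lambda_n(x)|\ \le\ |\Psi_1\cdots\Psi_{n-1}(u)-\Lambda_1\cdots\Lambda_{n-1}(u)|+|\Lambda_1\cdots\Lambda_{n-1}(u)-\Lambda_1\cdots\Lambda_{n-1}(v)|.
\]
The inductive hypothesis (uniform in the argument) bounds the first term by $\wh{Y}_{n-1}$, while the Lipschitz property of $\Lambda_1\cdots\Lambda_{n-1}$ bounds the second by $\Lri_{n-1}|u-v|\le\Lri_{n-1}B_n$. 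Adding these yields exactly $\wh{Y}_n=\wh{Y}_{n-1}+\Lri_{n-1}B_n$, as required. The bounds \eqref{eq:AL backward/forward} are then immediate specializations to $x=X_0=Z_0$.

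There is no real obstacle here; the only point that requires a little care is being consistent about which end of the composition carries the ``new'' factor, since in the forward case the induction peels off the \emph{outer} map $\Psi_n$ while in the backward case it peels off the \emph{inner} map $\Psi_n$. The reason the induction works uniformly in $x$ in both cases is precisely that the estimate $Y_{n-1}$ (resp.\ $\wh{Y}_{n-1}$) is a.s.\ an upper bound valid for every starting point, so composing with $\Psi_n$ or evaluating at $u=\Psi_n(x)$ introduces no conditioning issue.
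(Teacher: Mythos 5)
Your induction for \eqref{eq:AL backward} is exactly the paper's proof: the paper proves only the backward inequality (by splitting off the innermost map $\Psi_{n}$, bounding the first term by $\wh{Y}_{n-1}$ via the uniform inductive hypothesis and the second by $\Lri_{n-1}B_{n}$, and using the exact identity $\wh{Y}_{n}=\wh{Y}_{n-1}+\Lri_{n-1}B_{n}$), and then regards \eqref{eq:AL forward} as following "by the same argument" with the order of the maps reversed.

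Your separate induction for \eqref{eq:AL forward}, however, has a genuine gap in the last step. Peeling off the \emph{outer} map gives, as you say, the bound $B_{n}+\Lip(\Lambda_{n})Y_{n-1}$; but this quantity is \emph{not} equal to $Y_{n}$, and the submultiplicativity $\Lip(\Lambda_{n}\cdots\Lambda_{k+1})\le\Lip(\Lambda_{n})\Lip(\Lambda_{n-1}\cdots\Lambda_{k+1})$ that you invoke runs in the wrong direction: it yields $Y_{n}\le B_{n}+\Lip(\Lambda_{n})Y_{n-1}$, so you have bounded the left-hand side by something that \emph{dominates} $Y_{n}$ rather than by $Y_{n}$ itself. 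The inequality can be strict (take $\Aminus_{1}=2,\Aplus_{1}=1$ and $\Aminus_{2}=1,\Aplus_{2}=3$: then $\Lip(\Lambda_{2}\circ\Lambda_{1})=3<6=\Lip(\Lambda_{2})\Lip(\Lambda_{1})$), so your argument only establishes the weaker bound $\sum_{k=1}^{n}\Lip(\Lambda_{n})\cdots\Lip(\Lambda_{k+1})B_{k}$, with products of individual Lipschitz constants in place of the Lipschitz constant of the composition. This distinction matters later in the paper, where the moment estimates hinge on $\Erw\,\Lip(\Lambda_{n}\cdots\Lambda_{1})^{\theta}\sim\rho(\theta)^{n}$ rather than on $\prod_{k}\Erw\,\Lip(\Lambda_{k})^{\theta}$. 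The repair is easy: either peel off the \emph{inner} map $\Psi_{1}$ in the forward case as well (the first term is then handled by the inductive hypothesis applied to the shifted sequence $\Lambda_{2},\ldots,\Lambda_{n}$ and contributes $\sum_{k=2}^{n}\Lip(\Lambda_{n}\cdots\Lambda_{k+1})B_{k}$, the second contributes $\Lip(\Lambda_{n}\cdots\Lambda_{2})B_{1}$, and the sum is exactly $Y_{n}$), or simply observe that \eqref{eq:AL forward} is \eqref{eq:AL backward} applied to the reversed sequence $\tilde{\Psi}_{k}:=\Psi_{n-k+1}$, under which $\wh{Y}_{n}$ transforms precisely into $Y_{n}$.
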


\begin{proof}
It suffices to prove \eqref{eq:AL backward} for which we use induction over $n$. Note that \eqref{eq:def2 AL} provides the assertion for $n=1$. Assuming the assertion be true for $n-1$ (inductive hypothesis), we infer for any $x\in\R$
\begin{align}
&|\Psi_{1}\cdots\Psi_{n}(x)-\Lambda_{1}\cdots\Lambda_{n}(x)|\nonumber\\
&\hspace{1cm}\le\ |\Psi_{1}\cdots\Psi_{n}(x)-\Lambda_{1}\cdots\Lambda_{n-1}(\Psi_{n}(x))|\nonumber\\
&\hspace{1.2cm}+\ |\Lambda_{1}\cdots\Lambda_{n-1}(\Psi_{n}(x))-\Lambda_{1}\cdots\Lambda_{n-1}(\Lambda_{n}(x))|\nonumber\\
&\hspace{1cm}\le\ \wh{Y}_{n-1}\,+\,
\Lri_{n-1}|\Psi_{n}(x)-\Lambda_{n}(x)|\label{eq:sharper estimate}\\
&\hspace{1cm}\le\ \wh{Y}_{n-1}\,+\,
\Lri_{n-1}B_{n}.\nonumber
\end{align}
Since, by \eqref{eq:hatY_{n}}, the last line equals $\wh{Y}_{n}$, the proof is complete.
\qed \end{proof}

\section{Transfer operators}\label{sec:transfer operators}

Aiming at the tail behavior of the stationary distributions of the given \ALIFS\ $(X_{n})_{n\ge 0}$ at $\pm\infty$, the Markov chain $(\xi_{n})_{n\ge 0}$ on the set $\bbS^{0}$ and its possibly  reduced transition matrix $P$ will play an important role. Similar to the work by Goldie \cite{Goldie:91} and Kesten \cite{Kesten:73}, our approach uses a linear approximation, here of $X_{n}$ by the \LIFS\ $Z_{n}=\Lambda_{n}\cdots\Lambda_{1}(Z_{0})$, see Lemma \ref{lem:comparison with LIFS}, and renewal-theoretic arguments after a suitable change of measure. The latter means to find a harmonic transform under which $\bbS^{0}$ becomes the proper state space of $(\xi_{n})_{n\ge 0}$, thus making absorption at 0 impossible if this state appears at all. For the case when $\log |Z_{n}|=S_{n}$ has i.i.d.~increments and thus forms an ordinary random walk on $\R$, this transform is usually obtained with the help of moment generating functions. The method has indeed been effectively employed in \cite{Goldie:91} and \cite{Mirek:11} in the study of asymptotically linear stochastic equations, see also \cite{BurDamMik:16}. In the present context, however, the sequence $(S_{n})_{n\ge 0}$ has increments whose distributions are modulated by a two-state Markov chain, and if this chain is irreducible, then $(\xi_{n},S_{n})_{n\ge 0}$ constitutes a genuine \MRW\ instead of an ordinary one. This in turn calls for the more advanced tool of so-called transfer operators, as in \cite{Kesten:73,AlsMen:12,BurDamGuiMen:14,GuivarchLePage:16} for the analysis of multidimensional problems and with a \MRW\ whose driving chain has state space $\mathbb{S}^{d-1}$, the $d$-dimensional unit sphere in $\R^{d}$ for some $d\ge 2$. Since $\bbS^{0}$ has only two elements, the transfer operators reduce here to fairly simple objects, namely $2\times 2$ matrices.

\subsection{The Cram\'er transform of $P$} \label{subsec:Cramer transform}

Recall from \eqref{eq:def P(theta)} the definition of the Cram\'er transform $P(\theta)$ of the transition matrix $P$ on its canonical domain $\bbD=\{\theta\ge 0:\Erw|\Aminus|^{\theta}+\Erw|\Aplus|^{\theta}<\infty\}$, in particular $P(0)=P$ and
\begin{equation}\label{eq: Ptheta vs Lambda}
p_{\delta\epsilon}(\theta)\ =\ \Erw|\Adelta|^{\theta}\1_{\{\sign(\Adelta)=\delta\epsilon\}}\ =\ \Erw_{\delta}\Big[e^{\theta S_{1}}\1_{\{\xi_{1}=\epsilon\}}\Big],
\end{equation}
the latter being a log-convex function on $\bbD$. We make the further assumption that
\begin{equation}\label{eq:theta_infty>0}
\theta_{\infty}\,:=\,\sup\bbD\ >\ 0.
\end{equation}
Let $\rho(\theta)$ be the dominant eigenvalue of $P(\theta)$ for $\theta\in\bbD$, explicitly given by
\begin{equation}\label{eq:rho(theta) explicit}\small
\rho(\theta)\ =\ \frac{\pmm(\theta)+\ppp(\theta)}{2}+\sqrt{\frac{(\pmm(\theta)-\ppp(\theta))^{2}}{4}+\pmp(\theta)\ppm(\theta)}\ >\ 0.
\end{equation}
The matrix $P(\theta)$ and its spectral radius  are strongly related to the product  $\Lambda_{n}\cdots\Lambda_{1}$ as confirmed by the subsequent lemma. Let $p_{\delta\epsilon}^{n}(\theta) = [P(\theta)^{n}]_{\delta,\epsilon}$ denote the entries of $P(\theta)^{n}$. We note that the $n$-step transition probability $p_{\delta\epsilon}^{n}(\theta) $ should not be confused with $p_{\delta\epsilon}(\theta)^{n}$, the $n^{th}$ power of $p_{\delta\epsilon}(\theta)$, which does also appear later.

\begin{Lemma} \label{lem:Ptheta vs Lambda}
For any $\theta\in \bbD$ and $\epsilon,\delta\in \bbS^{0}$
\begin{gather}
p_{\delta\epsilon}^{n}(\theta)\,=\,\Erw|\Lambda_{n}\cdots\Lambda_{1}(\delta)|^{\theta}\1_{\{\sign(\Lambda_{n}\cdots\Lambda_{1}(\delta))=\epsilon\}}\label{eq: Ptheta vs Lambda n}
\shortintertext{and}
\lim_{n\to\infty}\frac{1}{n}\log\Erw\,\Lip(\Lambda_{n}\cdots\Lambda_{1})^{\theta}\,=\,\log\rho(\theta).\label{eq:Lip vs rho}
\end{gather}
\end{Lemma}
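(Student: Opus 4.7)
I would proceed by induction on $n$. The base case $n=1$ is immediate from the definition \eqref{eq: Ptheta vs Lambda}: for $\delta\in\bbS^{0}$, one has $\Lambda_{1}(\delta)=\Adelta_{1}\,\delta$, hence $|\Lambda_{1}(\delta)|=|\Adelta_{1}|$ and $\sign(\Lambda_{1}(\delta))=\delta\,\sign(\Adelta_{1})$; since $\delta^{2}=1$, the event $\{\sign(\Lambda_{1}(\delta))=\epsilon\}$ coincides with $\{\sign(\Adelta_{1})=\delta\epsilon\}$. For the induction step, I would decompose according to the intermediate sign $\eta\in\bbS^{0}$ of $\Lambda_{n}\cdots\Lambda_{1}(\delta)$. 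On the event $\{\sign(\Lambda_{n}\cdots\Lambda_{1}(\delta))=\eta\}$, the recursion yields
\begin{equation*}
\Lambda_{n+1}\cdots\Lambda_{1}(\delta)\ =\ {}^{\eta}\hspace{-3pt}A_{n+1}\cdot\Lambda_{n}\cdots\Lambda_{1}(\delta),
\end{equation*}
so $|\Lambda_{n+1}\cdots\Lambda_{1}(\delta)|^{\theta}=|{}^{\eta}\hspace{-3pt}A_{n+1}|^{\theta}\,|\Lambda_{n}\cdots\Lambda_{1}(\delta)|^{\theta}$ and $\sign(\Lambda_{n+1}\cdots\Lambda_{1}(\delta))=\eta\,\sign({}^{\eta}\hspace{-3pt}A_{n+1})$. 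Using the independence of $\Lambda_{n+1}$ from $(\Lambda_{1},\ldots,\Lambda_{n})$ together with the inductive hypothesis, the expectation then splits as $\sum_{\eta\in\bbS^{0}}p_{\delta\eta}^{n}(\theta)\,p_{\eta\epsilon}(\theta)$, which is exactly $p_{\delta\epsilon}^{n+1}(\theta)$.

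\textbf{Part (2).} The key structural observation is that $\Lambda_{n}\cdots\Lambda_{1}$ is piecewise linear on $\R$, with a single slope on each open half-line and vanishing at $0$; hence its Lipschitz constant equals the larger of the two absolute slopes, namely $\Lip(\Lambda_{n}\cdots\Lambda_{1})=|\Lambda_{n}\cdots\Lambda_{1}(+)|\vee|\Lambda_{n}\cdots\Lambda_{1}(-)|$. The elementary inequality $\tfrac{1}{2}(a+b)\le a\vee b\le a+b$ for $a,b\ge 0$, applied to $a=|\Lambda_{n}\cdots\Lambda_{1}(+)|^{\theta}$ and $b=|\Lambda_{n}\cdots\Lambda_{1}(-)|^{\theta}$, combined with summing \eqref{eq: Ptheta vs Lambda n} over $\epsilon\in\bbS^{0}$, yields the two-sided bracket
\begin{equation*}
\tfrac{1}{2}\,\1^{\top}P(\theta)^{n}\1\ \le\ \Erw\,\Lip(\Lambda_{n}\cdots\Lambda_{1})^{\theta}\ \le\ \1^{\top}P(\theta)^{n}\1,
\end{equation*}
where $\1=(1,1)^{\top}$.

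Since $P(\theta)$ is a $2\times 2$ matrix with non-negative entries, $\1^{\top}P(\theta)^{n}\1$ is equivalent up to universal constants to any matrix norm of $P(\theta)^{n}$, so Gelfand's spectral radius formula gives $\lim_{n\to\infty}\tfrac{1}{n}\log\1^{\top}P(\theta)^{n}\1=\log\rho(\theta)$, and \eqref{eq:Lip vs rho} then follows by squeezing. The overall argument is essentially mechanical; the one point requiring a bit of care is the identification of $\Lip(\Lambda_{n}\cdots\Lambda_{1})$ with the pointwise maximum $|\Lambda_{n}\cdots\Lambda_{1}(+)|\vee|\Lambda_{n}\cdots\Lambda_{1}(-)|$, which rests on continuity of the composition at the origin. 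Because the squeeze exploits only non-negativity of the entries of $P(\theta)$, the conclusion applies uniformly across the irreducible, unilateral, and separated regimes without any case distinction.
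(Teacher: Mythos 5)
Your proposal is correct and follows essentially the same route as the paper: induction on $n$ for \eqref{eq: Ptheta vs Lambda n} (you peel off the outermost factor $\Lambda_{n+1}$ where the paper peels off the innermost $\Lambda_{1}$, an immaterial difference), and for \eqref{eq:Lip vs rho} a two-sided sandwich of $\Erw\,\Lip(\Lambda_{n}\cdots\Lambda_{1})^{\theta}$ between constant multiples of a norm of $P(\theta)^{n}$ followed by Gelfand's formula. The paper uses $\|P(\theta)^{n}\|_{\infty}$ where you use $\1^{\top}P(\theta)^{n}\1$, but these differ only by a factor of at most $2$, so the arguments coincide in substance.
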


\begin{proof}
The relation \eqref{eq: Ptheta vs Lambda n} can be proved by induction over $n$. For $n=1$, it holds true by \eqref{eq: Ptheta vs Lambda}, and for the inductive step we note that
\begin{align*}
p_{\delta\epsilon}^{n+1}(\theta)\,&=\,\sum_{s\in\bbS^{0}}p_{\delta s}(\theta)p_{ s\epsilon}^{n}(\theta)\\
&=\,\sum_{s\in\bbS^{0}}\Erw|\Lambda_{1}(\delta)|^{\theta}\1_{\{\sign(\Lambda_{1}(\delta))= s\}}|\Lambda_{n+1}\cdots\Lambda_2(s)|^{\theta}\1_{\{\sign(\Lambda_{n+1}\cdots\Lambda_2(s))=\epsilon\}}\\
&=\,\Erw|\Lambda_{n+1}\cdots\Lambda_{1}(\delta)|^{\theta}\1_{\{\sign(\Lambda_{n+1}\cdots\Lambda_{1}(\delta))=\epsilon\}}.
\end{align*}
In particular, the norm of $P^{n}(\theta)$ as an operator on $(\R^{2},|\cdot|_{\infty})$ equals
\begin{equation*}
\|P^{n}(\theta)\|_{\infty}\ =\ \max_{\delta}\left(p_{\delta +}^{n}(\theta)+p_{\delta -}^{n}(\theta)\right)\ =\ \max_\delta \Erw\left[|\Lambda_{n}\cdots\Lambda_{1}(\delta)|^{\theta}\right].
\end{equation*}
Hence
$$ \|P^{n}(\theta)\|_{\infty}\ \le\ \Erw\,\Lip(\Lambda_{n}\cdots\Lambda_{1})^{\theta}\ \le\  2\|P^{n}(\theta)\|_{\infty} $$
and Gelfand's formula yields \eqref{eq:Lip vs rho}.
\qed \end{proof}

For further discussion, we consider the three cases as introduced in Section~\ref{sec:main results} separately.

\vspace{.2cm}
\textit{\bfseries Case 1}. $\pmp\wedge\ppm>0$, i.e.~$P$ is irreducible.\\[1mm]
Then there are uniquely determined left and right nonnegative eigenvectors $u(\theta),v(\theta)$, respectively, satisfying
\begin{equation}\label{eq:normalization eigenvectors}
u(\theta)^{\top}v(\theta)\,=\,1\quad\text{and}\quad\uplus(\theta)+\uminus(\theta)\,=\,1.
\end{equation}
Moreover, $\rho(\theta)^{-1}P(\theta)$ has dominant eigenvalue 1 with the same eigenvectors and is therefore a \emph{quasistochastic} matrix in the sense of \cite[p.~360]{Alsmeyer:14}. This means that it is irreducible and nonnegative with maximal eigenvalue 1 and unique (up to positive scalars)  associated left and right eigenvectors. As shown in \cite[Section 2]{Alsmeyer:14}, $\rho(\theta)^{-1}P(\theta)$ can be transformed into a proper stochastic matrix $\wh{P}(\theta)$, namely
\begin{equation}\label{eq:Phat(theta)}
\wh{P}(\theta)\,:=\,\frac{1}{\rho(\theta)}D(\theta)^{-1}P(\theta)D(\theta)\ =\ \left(\frac{p_{\delta\epsilon}(\theta)v_{\epsilon}(\theta)}{\rho(\theta)v_{\delta}(\theta)}\right)_{\delta,\epsilon\in\bbS^{0}}
\end{equation}
with $D(\theta):=\diag(\vminus(\theta),\vplus(\theta))$. $\wh{P}(\theta)$ is irreducible with unique stationary distribution
\begin{equation}\label{eq:pihat(theta)}
\wh{\pi}(\theta)\ =\ D(\theta)u(\theta)\ =\ \big(\uminus(\theta)\vminus(\theta),\uplus(\theta)\vplus(\theta)\big)^{\top},
\end{equation}
which may also be written as
\begin{equation*}\small
\wh{\pi}(\theta)\ =\ \left(\frac{\ppm(\theta)\vminus(\theta)^{2}}{\ppm(\theta)\vminus(\theta)^{2}+\pmp(\theta)\vplus(\theta)^{2}},\frac{\pmp(\theta)\vplus(\theta)^{2}}{\ppm(\theta)\vminus(\theta)^{2}+\pmp(\theta)\vplus(\theta)^{2}}\right)^{\!\top}.
\end{equation*}
Therefore
\begin{equation*}
u(\theta)\ =\ \left(\frac{\ppm(\theta)\vminus(\theta)}{\ppm(\theta)\vminus(\theta)^{2}+\pmp(\theta)\vplus(\theta)^{2}},\frac{\pmp(\theta)\vplus(\theta)}{\ppm(\theta)\vminus(\theta)^{2}+\pmp(\theta)\vplus(\theta)^{2}}\right)^{\!\top},
\end{equation*}
which in combination with $\uminus(\theta)+\uplus(\theta)=1$ further entails
$$ \ppm(\theta)\vminus(\theta)(1-\vminus(\theta))\ +\ \pmp(\theta)\vplus(\theta)(1-\vplus(\theta))\ =\ 0. $$
By the ergodic theorem for positive recurrent Markov chains,
\begin{align}\label{eq:convergence Phat(theta) aperiodic}\small
\lim_{n\to\infty}\wh{P}(\theta)^{n}\ =\ \begin{pmatrix} \whpim(\theta) &\whpip(\theta)\\ \whpim(\theta) &\whpip(\theta)\end{pmatrix}\ =\ \begin{pmatrix} \uminus(\theta)\vminus(\theta) &\uplus(\theta)\vplus(\theta)\\ \uminus(\theta)\vminus(\theta) &\uplus(\theta)\vplus(\theta) \end{pmatrix}
\end{align}
if $\wh{P}(\theta)$ is aperiodic, while
\begin{align}\label{eq:convergence Phat(theta) periodic}\small
\wh{P}(\theta)^{n}\ =\
\begin{cases}
\hfill I\ =\ \begin{pmatrix} 1 &0\\ 0 &1\end{pmatrix},&\text{if $n$ is even},\\[4mm]
\wh{P}(\theta)\ =\ \begin{pmatrix} 0 &1\\ 1 &0\end{pmatrix},&\text{if $n$ is odd}
\end{cases}
\end{align}
for all $n\ge 1$ in the 2-periodic case $(\pmm+\ppp=0)$, noting in passing that all $\wh{P}(\theta)$ have the same period. Now it follows that, with $\1:=(1,1)^{\top}$,
\begin{align*}
\lim_{n\to\infty}\frac{1}{n}\log\left[D(\theta)\wh{P}(\theta)^{n}D(\theta)^{-1}\1\right]_{\delta}\ =\ 0\quad\text{for each }\delta \in  \bbS^{0},
\end{align*}
and this remains true with any other $w=(\wminus,\wplus)^{\top}\in\IRge^{2}\backslash\{(0,0)\}$ instead of $\1$. Since $\rho(\theta)^{-n}P(\theta)^{n}=D(\theta)\wh{P}(\theta)^{n}D(\theta)^{-1}$ by \eqref{eq:Phat(theta)}, we arrive at
\begin{align}\label{eq:limit form of rho(theta)}
\log\rho(\theta)\ =\ \lim_{n\to\infty}\frac{1}{n}\log [P(\theta)^{n}w]_{\delta}.
\end{align}
for each $\delta \in  \bbS^{0}$ and each $w=(\wminus,\wplus)^{\top}\in\IRge^{2}\backslash\{(0,0)\}$ which will be utilized in the proof of the subsequent lemma.

\begin{Lemma}\label{lem:properties rho(theta) case 1}
The function $\bbD\ni\theta\mapsto\log\rho(\theta)$ is continuous, convex, and on $(0,\theta_{\infty})$ also smooth (i.e.~infinitely often differentiable). Moreover,
\begin{equation}\label{eq:rho(theta) limit}
\log\rho(\theta)\ =\ \lim_{n\to\infty}\frac{1}{n}\,\log\Erw_{\pm}|\Lambda_{n}\cdots\Lambda_{1}(X_{0})|^{\theta}.
\end{equation}
\end{Lemma}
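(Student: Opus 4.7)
The plan is to derive the four assertions from two inputs: the explicit quadratic formula \eqref{eq:rho(theta) explicit} for $\rho(\theta)$ together with the irreducibility of $P(\theta)$ in Case~1, and the representation of $\rho(\theta)$ as an exponential growth rate of $[P(\theta)^n\1]_\delta$ supplied by Lemma~\ref{lem:Ptheta vs Lambda} and equation \eqref{eq:limit form of rho(theta)}. For continuity on $\bbD$, I would first argue that each entry $p_{\delta\epsilon}(\theta)=\Erw|\Adelta|^\theta\1_{\{\sign(\Adelta)=\delta\epsilon\}}$ is continuous on $\bbD$ via the usual splitting $|\Adelta|^\theta = |\Adelta|^\theta\1_{\{|\Adelta|\le 1\}} + |\Adelta|^\theta\1_{\{|\Adelta|> 1\}}$ combined with dominated and monotone convergence, the finiteness condition defining $\bbD$ providing the required domination even at the right endpoint if it belongs to $\bbD$. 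Since in Case~1 the integrands defining $\pmp(\theta)$ and $\ppm(\theta)$ are strictly positive on events of positive probability, one has $\pmp(\theta)\ppm(\theta)>0$ for every $\theta\in\bbD$, so the discriminant under the square root in \eqref{eq:rho(theta) explicit} stays strictly positive throughout $\bbD$. Continuity and positivity of $\rho(\theta)$, and hence continuity of $\log\rho(\theta)$, follow immediately.

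For smoothness on $(0,\theta_\infty)$, I invoke the classical fact that each truncated moment generating function $p_{\delta\epsilon}(\theta)$ is real-analytic on the interior of its convergence domain, hence smooth on $(0,\theta_\infty)$. Combined with the strict positivity of the discriminant noted above, the square root in \eqref{eq:rho(theta) explicit} is smooth, and so are $\rho(\theta)$ and $\log\rho(\theta)$. For the limit formula \eqref{eq:rho(theta) limit}, I observe that under $\Prob_\pm$ the standard model from Subsection~\ref{subsec:2.2} forces $X_0=\pm 1$, so summation of identity \eqref{eq: Ptheta vs Lambda n} over $\epsilon\in\bbS^0$ yields
$$ \Erw_\pm|\Lambda_n\cdots\Lambda_1(X_0)|^\theta \,=\, \sum_{\epsilon\in\bbS^0}[P(\theta)^n]_{\pm\epsilon} \,=\, [P(\theta)^n\1]_\pm, $$
(the contribution of the event $\{\sign(\Lambda_n\cdots\Lambda_1(\pm))=0\}$ vanishing because $|0|^\theta=0$ for $\theta>0$, the case $\theta=0$ being handled separately since $[P(\theta)^n\1]_\pm$ is exactly the non-absorption probability there). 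Then \eqref{eq:rho(theta) limit} follows by applying \eqref{eq:limit form of rho(theta)} with $w=\1$.

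For convexity of $\log\rho$ on $\bbD$, I would combine the limit formula just obtained with H\"older's inequality applied to the expectation representation supplied by Lemma~\ref{lem:Ptheta vs Lambda}: for $\theta=\alpha\theta_1+(1-\alpha)\theta_2$ with $\alpha\in(0,1)$ and $\theta_1,\theta_2\in\bbD$, identity \eqref{eq: Ptheta vs Lambda n} gives $[P(\theta)^n\1]_\delta = \Erw\bigl(|\Lambda_n\cdots\Lambda_1(\delta)|\1_{A_n}\bigr)^\theta$ with $A_n=\{\sign(\Lambda_n\cdots\Lambda_1(\delta))\in\bbS^0\}$, whence H\"older yields
$$ [P(\theta)^n\1]_\delta \,\le\, \bigl([P(\theta_1)^n\1]_\delta\bigr)^{\alpha}\,\bigl([P(\theta_2)^n\1]_\delta\bigr)^{1-\alpha}. $$
Hence $\theta\mapsto \frac{1}{n}\log[P(\theta)^n\1]_\delta$ is convex on $\bbD$ for every $n$, and passage to the limit via \eqref{eq:limit form of rho(theta)} transfers convexity to $\log\rho$. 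The main technical point I anticipate is continuity at the right endpoint $\theta_\infty$ when $\theta_\infty\in\bbD$ (and one-sided continuity at $\theta=0$), where the analyticity argument no longer applies; as indicated, this is reduced to the continuity of each $p_{\delta\epsilon}$ at $\theta_\infty$ and handled by the splitting combined with monotone and dominated convergence.
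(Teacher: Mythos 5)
Your proposal is correct and follows essentially the same route as the paper: continuity and smoothness from the explicit formula \eqref{eq:rho(theta) explicit} together with $\pmp(\theta)\ppm(\theta)>0$ by irreducibility, the limit formula \eqref{eq:rho(theta) limit} from Lemma \ref{lem:Ptheta vs Lambda} and \eqref{eq:limit form of rho(theta)} with $w=\1$, and convexity deduced from that limit representation. The only difference is that you spell out the H\"older argument for log-convexity (and the boundary/endpoint technicalities) that the paper delegates to a citation of Saporta, which is a welcome filling-in of detail rather than a different method.
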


\begin{proof}
Since the components of $P(\theta)$ are continuous functions on $\bbD$ and even smooth on the interior of this set, the same properties hold for $\log\rho(\theta)$ because, by irreducibility,  $\ppm(\theta)\pmp(\theta)>0$ for all $\theta\in \bbD$ and thus $\rho(\theta)>0$, and by \eqref{eq:rho(theta) explicit}. The log-convexity of $\rho(\theta)$ is a direct consequence of  \eqref{eq:rho(theta) limit} (see also \cite[Prop.~1 and Cor.~2]{Saporta:05}). Finally, in order to obtain \eqref{eq:rho(theta) limit}, we can argue as follows after the observation that
\begin{align*}
P(\theta)\ =\ \left(p_{\delta\epsilon}\,\Erw\left[|\Adelta|^{\theta}\Big|\sign(\Adelta)\delta=\epsilon\right]\right)_{\delta,\epsilon\in\bbS^{0}}.
\end{align*}
By Lemma \ref{lem:Ptheta vs Lambda},
\begin{align*}
\Erw_{\delta}&|\Lambda_{n}\cdots\Lambda_{1}(X_{0})|^{\theta}=\
\sum_{s\in\bbS^{0}}p^n_{\delta s}(\theta)\
=\ [P(\theta)^{n}\1 ]_{\delta}\ 
\end{align*}
for any $\delta\in\bbS^{0}$. Taking the logarithm on both sides and dividing by $n$, \eqref{eq:rho(theta) limit} follows by means of \eqref{eq:limit form of rho(theta)}.
\qed
\end{proof}

\vspace{.2cm}
\textit{\bfseries Case 2}. $\pmp>0$ and $\ppm=0$ [the case when $\ppm>0$ and $\pmp=0$ can naturally be treated analogously].\\[1mm]
Then $P(\theta)$ is upper triangular for any $\theta\in\bbD$ with eigenvalues $\pmm(\theta),\ppp(\theta)$, giving $\rho(\theta)=\pmm(\theta)\vee\ppp(\theta)$. As a direct consequence, $\rho(\theta)$ is continuous and log-convex as the maximum of two such functions. It is also smooth at any $\theta$ with $\pmm(\theta)\ne\ppp(\theta)$, but may not be so if $\pmm(\theta)=\ppp(\theta)$.

\vspace{.2cm}
\textit{\bfseries Case 2A}. $\ppp(\theta)>\pmm(\theta)$.\\[1mm]
Then the left and right eigenvectors $u(\theta),v(\theta)$ satisfying \eqref{eq:normalization eigenvectors} are
\begin{gather}\label{eq:u(theta),v(theta) case 2A}
u(\theta)\,=\,e_{2}\,:=\,(0,1)^{\top}\quad\text{and}\quad v(\theta)\,=\,\left(\frac{\pmp(\theta)}{\ppp(\theta)-\pmm(\theta)},1\right)^{\top}.
\end{gather}
The matrix $\wh{P}(\theta)$, defined by \eqref{eq:Phat(theta)} and here no longer irreducible, equals
\begin{equation}\label{eq:Phat(theta) case 2A}
\wh{P}(\theta)\ =\ {\small\begin{pmatrix} \displaystyle\frac{\pmm(\theta)}{\ppp(\theta)} &\displaystyle 1-\frac{\pmm(\theta)}{\ppp(\theta)}\\[4mm] 0 & 1\end{pmatrix}},
\end{equation}
with unique stationary distribution $\wh{\pi}(\theta)=e_{2}$. Now it is readily checked that \eqref{eq:convergence Phat(theta) aperiodic} as well as \eqref{eq:rho(theta) limit} from Lemma \ref{lem:properties rho(theta) case 1} remain valid.

\vspace{.2cm}
\textit{\bfseries Case 2B}. $\pmm(\theta)>\ppp(\theta)$.\\[1mm]
Then the left and right eigenvectors $u(\theta),v(\theta)$ satisfying \eqref{eq:normalization eigenvectors} are
\begin{gather*}
u(\theta)\,=\,\left(\frac{\pmm(\theta)-\ppp(\theta)}{\pmm(\theta)+\pmp(\theta)-\ppp(\theta)},\frac{\pmp(\theta)}{\pmm(\theta)+\pmp(\theta)-\ppp(\theta)}\right)^{\top}
\shortintertext{and}
v(\theta)\,=\,\left(\frac{\pmm(\theta)+\pmp(\theta)-\ppp(\theta)}{\pmm(\theta)-\ppp(\theta)},0\right)^{\top},
\end{gather*}
but $\wh{P}(\theta)$ cannot be defined because $D(\theta)$ is not invertible. As $P(\theta)$ is upper triangular in the considered case, the probability that the chain $(\xi_n)_{n\ge 0}$, when starting in state $-$, remains there $n$ times equals $\pmm^{n}(\theta)=\pmm(\theta)^{n}$ for each $n\in\N$. Using this and recalling \eqref{eq: Ptheta vs Lambda n}, we see that
\begin{align}
\begin{split}\label{eq:rho(theta) case 2B}
\log\rho(\theta)\ &=\ \lim_{n\to\infty}\frac{1}{n}\,\log\pmm(\theta)^{n}\\
&=\ \lim_{n\to\infty}\frac{1}{n}\,\log\Eminus|\Lambda_{n}\cdots\Lambda_{1}(X_{0})|^{\theta}\1_{\{\xi_{1}=\ldots=\xi_{n}=-1\}}\\
&=\ \lim_{n\to\infty}\frac{1}{n}\,\log\Eminus|\Lambda_{n}\cdots\Lambda_{1}(X_{0})|^{\theta}\1_{\{\xi_{n}=-1\}}
\end{split}
\end{align}
holds instead of \eqref{eq:rho(theta) limit}.

\vspace{.2cm}
\textit{\bfseries Case 2C}. $\pmm(\theta)=\ppp(\theta)$.\\[1mm]
This is the boundary case where left and right eigenvectors equal $u(\theta)=e_{2}$ and $v(\theta)=e_{1}:=(1,0)^{\top}$, respectively, and are thus orthogonal. Furthermore, \eqref{eq:rho(theta) case 2B} as well as
\begin{equation}\label{eq:rho(theta) case 2C}
\log\rho(\theta)\ =\ \lim_{n\to\infty}\frac{1}{n}\,\log\Eplus \Lambda_{n}\cdots\Lambda_{1}(X_{0})
\end{equation}
hold true.

\subsection{The measure change}\label{subsec:measure change}

Let $\cF$ be the $\sigma$-field generated by $(\xi_{n},S_{n})_{n\ge 0}$ and recall that $\zeta_{1},\zeta_{2},\ldots$ denote the increments of the $S_{n}$.

\vspace{.2cm}
\textit{\bfseries Case 1}. $\pmp\wedge\ppm>0$.\\[1mm]
We make the additional assumption that
\begin{equation}\label{eq:existence kappa}
\rho(\kappa)=1\quad\text{for some }\kappa\in (0,\theta_{\infty}]
\end{equation}
and note that $\kappa$ is unique because $\rho(\theta)$ is convex, $\rho(0)\le 1$, and $\rho(\theta)<1$ for $\theta$ in a right neighborhood of $0$ (Lemma \ref{lem:rho(s)<1} in the Appendix). In particular, the monotonicity of $\rho'(\theta)$ for $\theta\in (0,\theta_{\infty})$
entails $\rho'(\kappa)=\lim_{\theta\uparrow\kappa}\rho'(\theta)>0$, a fact to be used below in the proof of Lemma \ref{lem:stationary drift}. Furthermore, $P(\kappa)$ is quasistochastic and the associated transformation $\wh{P}(\kappa)$, defined by
\begin{align*}
\wh{P}(\kappa)\ =\ \begin{pmatrix} \pmm(\kappa) &\frac{\pmp(\kappa)\vplus(\kappa)}{\vminus(\kappa)}\\ \frac{\ppm(\kappa)\vminus(\kappa)}{\vplus(\kappa)} &\ppp(\kappa) \end{pmatrix}\ =\ \left(\frac{v_{\epsilon}(\kappa)}{v_{\delta}(\kappa)}\,\Erw_{\delta}\big[e^{\kappa S_{1}}\1_{\{\xi_{1}=\epsilon\}}\big]\right)_{\delta,\epsilon\in\bbS^{0}}
\end{align*}
(see \eqref{eq:Phat(theta)}), is an irreducible Markov transition matrix with unique stationary distribution $\wh{\pi}:=\wh{\pi}(\kappa)$ given by \eqref{eq:pihat(theta)}. Since the sequence $(e^{\theta S_n} v_{\xi_n}\rho(\theta)^{-n})_{n\ge 0}$ forms a positive martingale, it allows to define a new probability measure $\Prob_{\delta}^{(\theta)}$  on the path space $\cF=(\bbS^{0}\times\R)^{\N_{0}}$ of $(\xi_{n},\zeta_{n})_{n\ge 0}$ by
\begin{equation}\label{eq:def erw}
\Erw_{\delta}^{(\theta)}f((\xi_{0},\zeta_{0}),\ldots,(\xi_{n},\zeta_{n}))\ =\ \frac{\Erw_{\delta}\big[e^{\theta S_{n}}v_{\xi_{n}}(\theta)f((\xi_{1},\zeta_{1}),\ldots,(\xi_{n},\zeta_{n}))\big]}{v_{\delta}(\theta)\rho(\theta)^{n}}
\end{equation}
for all $n\in\N_{0}$ and bounded measurable $f:(\bbS^{0}\times\R)^{n}\to\R$. As shown in the next lemma, $(\xi_{n},\zeta_{n})_{n\ge 0}$ is still a Markov chain on $\bbS^{0}\times\R$ under $\Prob_{\delta}^{(\theta)}$, but with new transition kernel $Q_{\theta}$ given by
\begin{equation}\label{eq:Qtheta f}
Q_{\theta}f(\delta,x)\,:=\,\frac{1}{v_{\delta}(\theta)\rho(\theta)}\,\Erw_{\delta}\Big[e^{\theta\zeta_{1}}v_{\xi_{1}}(\theta)f(\xi_{1},\zeta_{1})\Big]
\end{equation}
for bounded functions $f:\bbS^{0}\times\R\to\R$. Hence, the conditional law of $(\xi_{1},\zeta_{1})$ given initial state $(\delta,x)$ depends only on $\delta$ but not on $x$.

\begin{Lemma}\label{lem:Pdelta^kappa case 1}
For $\theta\in\bbD$ and $\delta\in\bbS^{0}$, the following assertions hold under the new probability measure $\Prob_{\delta}^{(\theta)}$:
\begin{itemize}\itemsep2pt
\item[(a)] $(\xi_{n},\zeta_{n})_{n\ge 0}$ is a Markov chain with transition operator $Q_{\theta}$ and $\xi_{0}=\delta$.
\item[(b)] $(\xi_{n})_{n\ge 0}$ is an irreducible Markov chain on $\bbS^{0}$ with transition matrix $\wh{P}(\theta)$ and unique stationary distribution $\wh{\pi}(\theta)$.
\item[(c)] $(\xi_{n},S_{n})_{n\ge 0}$ is a \MRW\ with driving chain $(\xi_{n})_{n\ge 0}$ and $S_{0}=0$.
\end{itemize}
\end{Lemma}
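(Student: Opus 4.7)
The plan is to realise $\Prob_\delta^{(\theta)}$ as the Doob $h$-transform of $\Prob_\delta$ along the positive mean-one martingale
$$M_n\,:=\,\frac{e^{\theta S_n}\,v_{\xi_n}(\theta)}{v_\delta(\theta)\,\rho(\theta)^n},\quad n\ge 0,$$
whose martingale property is nothing but the componentwise eigenvector identity $P(\theta)v(\theta)=\rho(\theta)v(\theta)$ combined with the Markov property of $(\xi_n,\zeta_n)$ and the representation \eqref{eq: Ptheta vs Lambda}. Because $M_n>0$ and $\Erw_\delta M_n=1$, formula \eqref{eq:def erw} defines a consistent projective system; Kolmogorov's extension theorem therefore yields a unique probability measure $\Prob_\delta^{(\theta)}$ on $\cF$ with $\xi_0=\delta$ almost surely (and effectively $S_0=0$, since only $(\xi_k,\zeta_k)$ for $k\ge 1$ enter the right-hand side of \eqref{eq:def erw}). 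All three assertions (a)--(c) will then be extracted from a single one-step calculation.

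To prove (a), I would compute, for any bounded measurable $g:\bbS^0\times\R\to\R$ and $n\ge 0$,
\begin{align*}
\Erw_\delta^{(\theta)}\!\left[g(\xi_{n+1},\zeta_{n+1})\,\big|\,\cF_n\right]\,&=\,M_n^{-1}\,\Erw_\delta\!\left[M_{n+1}\,g(\xi_{n+1},\zeta_{n+1})\,\big|\,\cF_n\right]\\
&=\,\frac{1}{v_{\xi_n}(\theta)\rho(\theta)}\,\Erw\!\left[e^{\theta\zeta_{n+1}}v_{\xi_{n+1}}(\theta)g(\xi_{n+1},\zeta_{n+1})\,\big|\,\xi_n\right],
\end{align*}
where the inner conditional expectation collapses to a function of $\xi_n$ alone by the $\Prob$-Markov property of $(\xi_n,\zeta_n)$; the right-hand side matches $Q_\theta g(\xi_n,\cdot)$ exactly and establishes the Markov property with transition kernel $Q_\theta$. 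For (b), marginalising over $\zeta_{n+1}$ in $Q_\theta$ and invoking \eqref{eq: Ptheta vs Lambda} produces the one-step transition
$$\Prob_\delta^{(\theta)}[\xi_{n+1}=\epsilon\,|\,\xi_n=\delta']\ =\ \frac{v_\epsilon(\theta)\,p_{\delta'\epsilon}(\theta)}{v_{\delta'}(\theta)\,\rho(\theta)},$$
which is precisely the $(\delta',\epsilon)$-entry of $\wh{P}(\theta)$ displayed in \eqref{eq:Phat(theta)}; irreducibility of $\wh{P}(\theta)$ and the identification of $\wh{\pi}(\theta)$ as its stationary distribution were already settled in Section~\ref{subsec:Cramer transform}. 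Statement (c) is then immediate: since $Q_\theta((\delta',x),\cdot)$ has been shown to depend on $\delta'$ alone and not on $x$, the conditional distribution of $\zeta_{n+1}$ given $\cF_n$ and $\xi_{n+1}$ depends only on the pair $(\xi_n,\xi_{n+1})$, which is the defining property of an MRW.

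There is no serious obstacle here, as the whole argument is the standard Perron--Frobenius $h$-transform applied to the two-state Cram\'er family $\{P(\theta):\theta\in\bbD\}$. The only point at which a degeneracy could creep in is a vanishing normalisation $v_\delta(\theta)$ in \eqref{eq:def erw}, but in Case~1 the irreducibility of $P$, hence of $P(\theta)$, combined with Perron--Frobenius forces $\vminus(\theta),\vplus(\theta)>0$, so $M_n$ is strictly positive and the transform is unambiguously defined. The remaining bookkeeping item is simply to verify that the single kernel $Q_\theta$ simultaneously encodes the joint Markovianity of $(\xi_n,\zeta_n)$, the marginal chain on $\bbS^0$, and the conditional-independence structure of the additive increments given the driving chain --- which the one-step calculation above accomplishes at once.
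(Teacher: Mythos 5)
Your proposal is correct and is essentially the paper's own argument: the paper likewise observes that $(e^{\theta S_n}v_{\xi_n}(\theta)\rho(\theta)^{-n})_{n\ge 0}$ is a positive mean-one martingale and proves (a) by the same one-step conditional-expectation computation (written there with test functions of the past), after which (b) and (c) are read off exactly as you do. Your added remarks on consistency/Kolmogorov extension and on the strict positivity of $v(\theta)$ via Perron--Frobenius are fine and merely make explicit what the paper leaves implicit.
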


\begin{proof}
(a) It suffices to note that, for arbitrary bounded measurable $f,g$ with obvious domains and $n\in\N$,
\begin{align*}
&\Erw_{\delta}^{(\theta)}[f((\xi_{1},\zeta_{1}),\ldots,(\xi_{n},\zeta_{n}))g(\xi_{n+1},\zeta_{n+1})]\\
&=\ \frac{\Erw_{\delta}\big[e^{\theta S_{n}}f((\xi_{1},\zeta_{1}),\ldots,(\xi_{n},\zeta_{n}))e^{\theta\zeta_{n+1}}v_{\xi_{n+1}}(\theta)g(\xi_{n+1},\zeta_{n+1})\big]}{v_{\delta}(\theta)\rho(\theta)^{n+1}}\\
&=\ \frac{\Erw_{\delta}\big[e^{\theta S_{n}}f((\xi_{1},\zeta_{1}),\ldots,(\xi_{n},\zeta_{n}))v_{\xi_{n}}(\theta)Q_{\theta}g(\xi_{n},\zeta_{n})\big]}{v_{\delta}(\theta)\rho(\theta)^{n}}\\
&=\ \Erw_{\delta}^{(\theta)}[f((\xi_{1},\zeta_{1}),\ldots,(\xi_{n},\zeta_{n}))Q_{\theta}g(\xi_{n},\zeta_{n})\big]
\end{align*}
holds true, implying
$$ \Erw_{\delta}^{(\theta)}\big[g(\xi_{n+1},\zeta_{n+1})\big|\cF_{n}\big]\ =\ Q_{\theta}g(\xi_{n},\zeta_{n})\quad\text{a.s.} $$
for $\cF_{n}:=\sigma((\xi_{k},\zeta_{k}),0\le k\le n)$. 

\vspace{.1cm}
(b) and (c) are  direct consequences of (a).\qed 
\end{proof}

Recall that $\wh{\pi}(\theta)=\big(\uminus(\theta)\vminus(\theta),\uplus(\theta)\vplus(\theta)\big)^{\top}$ defined in \eqref{eq:pihat(theta)} equals the stationary distribution of $\wh{P}(\theta)$ and thus of $(\xi_{n})_{n\ge 0}$ under $\Prob_{\delta}^{(\theta)}$, $\delta\in\bbS^{0}$.

\begin{Lemma}\label{lem:stationary drift}
Under $\Prob_{\wh{\pi}(\theta)}^{(\theta)}$, the \MRW\ $(\xi_{n},S_{n})_{n\ge 0}$ has drift
\begin{equation}\label{eq:stationary drift}
\Erw_{\wh{\pi}(\theta)}^{(\theta)}S_{1}\ =\ \frac{\rho'(\theta)}{\rho(\theta)}
\end{equation}
for any $\theta\in\bbD$ and is finite for $\theta\in (0,\theta_{\infty})$, the interior of $\bbD$.
The drift under $\Prob_{\wh{\pi}(\kappa)}^{(\kappa)}$ is positive, but possibly infinite if $\kappa=\theta_{\infty}$. In the latter case, \eqref{eq:stationary drift} still holds with $\rho'(\kappa):=\lim_{\theta\uparrow\kappa}\rho'(\theta)$.
\end{Lemma}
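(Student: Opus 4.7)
The plan is to compute the expectation directly from the definition of $\Prob^{(\theta)}$ and then identify the resulting bilinear form as the derivative of $\rho(\theta)$ via Perron--Frobenius differentiation.

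First, I will unfold the change-of-measure formula \eqref{eq:def erw} for $n=1$ applied to $f(\xi_1,\zeta_1)=\zeta_1$ and a fixed starting state $\delta$. Since $\zeta_1=\log|A^\delta|$ on $\{\xi_0=\delta\}$ and $\{\xi_1=\epsilon\}=\{\sign(A^\delta)\delta=\epsilon\}$, this yields
\begin{equation*}
\Erw_\delta^{(\theta)}S_1
\,=\,\frac{1}{v_\delta(\theta)\rho(\theta)}\sum_{\epsilon\in\bbS^0}v_\epsilon(\theta)\,\Erw\bigl[|A^\delta|^\theta\log|A^\delta|\,\1_{\{\sign(A^\delta)\delta=\epsilon\}}\bigr].
\end{equation*}
The inner expectation is exactly $p_{\delta\epsilon}'(\theta)$, where the prime denotes differentiation with respect to $\theta$; this differentiation under the integral is justified on the open interior $(0,\theta_\infty)$ by the standard log-convexity/analyticity argument for Laplace transforms of positive measures. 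Averaging with respect to $\wh\pi(\theta)=\bigl(\uminus(\theta)\vminus(\theta),\uplus(\theta)\vplus(\theta)\bigr)^\top$ cancels the $v_\delta(\theta)$ in the denominator and produces the clean bilinear form
\begin{equation*}
\Erw_{\wh\pi(\theta)}^{(\theta)}S_1\,=\,\frac{1}{\rho(\theta)}\sum_{\delta,\epsilon}u_\delta(\theta)\,p_{\delta\epsilon}'(\theta)\,v_\epsilon(\theta)\,=\,\frac{u(\theta)^\top P'(\theta)v(\theta)}{\rho(\theta)}.
\end{equation*}

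The second step is to identify $u(\theta)^\top P'(\theta)v(\theta)$ with $\rho'(\theta)$. Differentiating the right-eigenvector identity $P(\theta)v(\theta)=\rho(\theta)v(\theta)$ in $\theta$ and left-multiplying by $u(\theta)^\top$ yields
\begin{equation*}
u(\theta)^\top P'(\theta)v(\theta)+u(\theta)^\top P(\theta)v'(\theta)\,=\,\rho'(\theta)\,u(\theta)^\top v(\theta)+\rho(\theta)\,u(\theta)^\top v'(\theta).
\end{equation*}
Using $u(\theta)^\top P(\theta)=\rho(\theta)u(\theta)^\top$ and the normalization $u(\theta)^\top v(\theta)=1$ from \eqref{eq:normalization eigenvectors}, the two terms involving $v'(\theta)$ cancel, leaving $u(\theta)^\top P'(\theta)v(\theta)=\rho'(\theta)$. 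Inserting this in the previous display gives \eqref{eq:stationary drift}. Smoothness of $u(\theta),v(\theta),\rho(\theta)$ on $(0,\theta_\infty)$ (from the simplicity of $\rho(\theta)$ as Perron eigenvalue of the irreducible matrix $P(\theta)$ in Case~1, or from the explicit Case~2 formulas) ensures the validity of all differentiations.

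For the finiteness on $(0,\theta_\infty)$ one simply notes that each $p_{\delta\epsilon}'(\theta)=\Erw[|A^\delta|^\theta\log|A^\delta|\,\1_{\{\sign(A^\delta)\delta=\epsilon\}}]$ is finite on the open interior of $\bbD$ because $\theta\mapsto\Erw|A^\delta|^\theta$ is finite on a neighborhood of $\theta$ and the $\log$ is absorbed by an arbitrarily small $\theta$-perturbation. Positivity of the drift at $\theta=\kappa$ has essentially been recorded already: convexity of $\log\rho$ together with $\rho(0)\le 1$, $\rho(\theta)<1$ on a right neighborhood of $0$ (Lemma \ref{lem:rho(s)<1}) and $\rho(\kappa)=1$ forces $\rho'(\kappa)>0$, with the left limit being well defined even if $\kappa=\theta_\infty$.

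The only delicate point is the boundary case $\kappa=\theta_\infty$, where $P'(\kappa)$ may have infinite entries and direct differentiation under the expectation is not guaranteed. Here I would argue by monotone convergence: for $\theta<\kappa$ the formula \eqref{eq:stationary drift} holds and its right-hand side tends to $\rho'(\kappa-)/\rho(\kappa)$ (possibly $+\infty$), while on the left-hand side the integrands converge monotonically in an appropriate sense, so the left-hand side converges to $\Erw_{\wh\pi(\kappa)}^{(\kappa)}S_1$, which is therefore $\rho'(\kappa-)/\rho(\kappa)$ as claimed. This boundary argument is the one place where care beyond the algebraic identity is required.
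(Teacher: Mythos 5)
Your proposal is correct and follows essentially the same route as the paper: you compute $\Erw_{\wh\pi(\theta)}^{(\theta)}S_1=\rho(\theta)^{-1}u(\theta)^{\top}P'(\theta)v(\theta)$ directly from the change-of-measure formula and identify $u(\theta)^{\top}P'(\theta)v(\theta)=\rho'(\theta)$ by differentiating the eigenvalue relation together with the normalization $u(\theta)^{\top}v(\theta)=1$ (the paper differentiates the scalar identity $u^{\top}Pv=\rho$ instead of $Pv=\rho v$, which is the same computation). Your extra monotone-convergence remark for the boundary case $\kappa=\theta_\infty$ is a welcome, slightly more explicit justification of a point the paper only asserts, and the positivity argument via convexity of $\rho$ matches the one the paper records just after \eqref{eq:existence kappa}.
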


\begin{proof}
Recalling that $u(\theta)^{\top}v(\theta)=1$, thus $u(\theta)^{\top}P(\theta)v(\theta)=\rho(\theta)$ for $\theta\in\bbD$, it follows by differentiation that, for $\theta\in (0,\theta_{\infty})$,
\begin{align*}
\rho'(\theta)\ &=\ \frac{d}{d\theta}\left[u(\theta)^{\top}P(\theta)v(\theta)\right]\\
&=\ u'(\theta)^{\top}P(\theta)v(\theta)\,+\,u(\theta)^{\top}P'(\theta)v(\theta)\,+\,u(\theta)^{\top}P(\theta)v'(\theta)\\
&=\ \rho(\theta)\Big(u'(\theta)^{\top}v(\theta)+u(\theta)^{\top}v'(\theta)\Big)\,+\,u(\theta)^{\top}P'(\theta)v(\theta)\\
&=\ \rho(\theta)\frac{d}{d\theta}\left[u(\theta)^{\top}v(\theta)\right]\,+\,u(\theta)^{\top}P'(\theta)v(\theta)\\
&=\ u(\theta)^{\top}P'(\theta)v(\theta),
\end{align*}
where $P'(\theta)=(p_{\delta\epsilon}'(\theta))_{\delta,\epsilon\in\bbS^{0}}$ denotes the (componentwise) derivative of $P(\theta)$. Since, on the other hand,
\begin{align*}
\Erw_{\wh{\pi}(\theta)}^{(\theta)}S_{1}\ &=\ \frac{1}{\rho(\theta)}\sum_{\delta\in\bbS^{0}}\frac{\wh{\pi}_{\delta}(\theta)}{v_{\delta}(\theta)}\,\Erw_{\delta}\big[e^{\theta S_{1}}S_{1}v_{\xi_{1}}(\theta)\big]\\
&=\ \frac{1}{\rho(\theta)}\sum_{\delta,\epsilon\in\bbS^{0}}u_{\delta}(\theta)\,\Erw_{\delta}\big[e^{\theta S_{1}}S_{1}\1_{\{\xi_{1}=\epsilon\}}\big]\,v_{\epsilon}(\theta)\\
&=\ \frac{1}{\rho(\theta)}\sum_{\delta,\epsilon\in\bbS^{0}}u_{\delta}(\theta)p_{\delta\epsilon}'(\theta)v_{\epsilon}(\theta)\\
&=\ \frac{u(\theta)^{\top}P'(\theta)v(\theta)}{\rho(\theta)},
\end{align*}
we see that \eqref{eq:stationary drift} holds. We have already pointed out above (see after \eqref{eq:existence kappa}) that $\rho'(\kappa)=\lim_{\theta\uparrow\kappa}\rho'(\theta)>0$, and we add that $\rho'(\kappa)$ is finite if $\kappa<\theta_{\infty}$, but may be infinite if $\kappa$ equals the upper boundary of $\bbD$.
\qed \end{proof}

\begin{Rem}\rm
The following argument shows that under Condition \eqref{eq:moments AlogA B} of Thm.~\ref{thm:main case 1}, the finiteness of $\Erw_{\wh{\pi}(\kappa)}^{(\kappa)}S_{1}=\rho'(\kappa)$ is always guaranteed, even if $\kappa=\theta_{\infty}$. Indeed, recalling $u(\kappa)=\wh{\pi}(\kappa)$ and $v(\kappa)=(1,1)^{\top}$, it follows that
\begin{align*}
\rho'(\kappa)\ &=\ \Erw_{\wh{\pi}(\kappa)}^{(\kappa)}S_{1}\ =\ \sum_{\delta,\epsilon\in\bbS^{0}}\wh{\pi}_{\delta}(\kappa)\,\Erw_{\delta}\big[\log|\Adelta|\,|\Adelta|^{\kappa}\1_{\{\sign(\Adelta)\delta
=\epsilon\}}\big]\nonumber\\
&=\ \whpim(\kappa)\,\Erw|\Aminus|^{\kappa}\log|\Aminus|\,+\,\whpip(\kappa)\,\Erw|\Aplus|^{\kappa}\log|\Aplus|
\end{align*}
and then that $\Erw_{\wh{\pi}(\kappa)}^{(\kappa)}S_{1}<\infty$ holds iff $\Erw|\Aminus|^{\kappa}\log|\Aminus|+\Erw|\Aplus|^{\kappa}\log|\Aplus|<\infty$,
because $\wh{\pi}(\kappa)$ has positive entries.
\end{Rem}

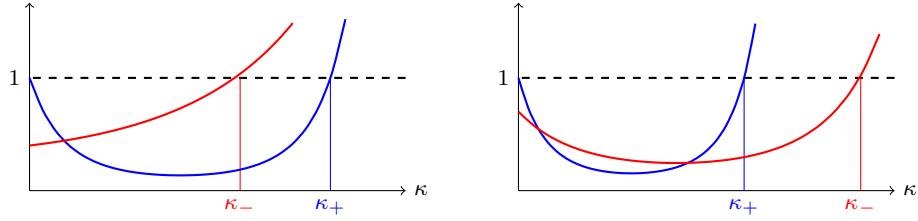
\begin{figure}[t]
\centering
\begin{tikzpicture}
 \draw[->] (0,0) -- (5,0) node[right] {$\kappa$};
 \draw[->] (0,0) -- (0,2.5) node[above] {$ $};
 \draw[-,thick,dashed] (5,1.5) -- (0,1.5) node[left]{${1}$};
 \draw[-,red] (2.8,1.5) -- (2.8,0) node[below]{\color{red} $\kappa_{-}$};
 \draw[-,blue] (4,1.5) -- (4,0) node[below]{\color{blue} $\kappa_{+}$};
 \draw[scale=1,domain=0:4.2,smooth,variable=\x,blue,thick] plot ({\x},
 {1.5*exp(-2.*\x+(\x*\x)/2)});
 \draw[scale=1,domain=0:3.5,smooth,variable=\x,red,thick] plot ({\x},
 {.6*exp(.2*\x+(\x*\x)/20)});
 \draw[->] (6.5,0) -- (11.5,0) node[right] {$\kappa$};
 \draw[->] (6.5,0) -- (6.5,2.5) node[above] {$ $};
 \draw[-,thick,dashed] (11.5,1.5) -- (6.5,1.5) node[left]{${1}$};
 \draw[-,red] (11.05,1.5) -- (11.05,0) node[below]{\color{red} $\kappa_{-}$};
 \draw[-,blue] (9.5,1.5) -- (9.5,0) node[below]{\color{blue} $\kappa_{+}$};
 \draw[domain=0:3.15,smooth,variable=\x,blue,thick] plot ({6.5+\x},
 {1.5*exp(-2.5*\x+(\x*\x)/1.2)});
 \draw[domain=0:4.8,smooth,variable=\x,red,thick] plot ({6.5+\x},
 {1.05*exp(-\x+(\x*\x)/4.2)});
\end{tikzpicture}
\caption{The two functions $\ppp(\theta)$ (blue) and $\pmm(\theta)$ (red) and the two possible constellations for $\kappa_{-}$ and $\kappa_{+}$ if both values exist. }\label{fig2}
\end{figure}

\vspace{.1cm}
\textit{\bfseries Case 2}. $\pmp>0$ and $\ppm=0$.\\[1mm]
Recall that $P(\theta)$ is upper triangular and $\rho(\theta)=\pmm(\theta)\vee\ppp(\theta)$ for any positive $\theta\in\bbD$.

\vspace{.2cm}
\textit{\bfseries Case 2A}. $\ppp(\theta)>\pmm(\theta)$.\\[1mm]
Then $\rho(\theta)=\ppp(\theta)$ and the following lemma is almost identical with Lemma \ref{lem:Pdelta^kappa case 1}, the only difference being that $(\xi_{n})_{n\ge 0}$ is obviously no longer irreducible. It is therefore stated without proof.

\begin{Lemma}\label{lem:Pdelta^kappa case2A}
For positive $\theta\in\bbD$ and $\delta\in\bbS^{0}$, define the probability measure $\Prob_{\delta}^{(\theta)}$ on $\cF$ as in Lemma \ref{lem:Pdelta^kappa case 1}. Then the following holds under each $\Prob_{\delta}^{(\theta)}$:
\begin{itemize}\itemsep2pt
\item[(a)] Lemma \ref{lem:Pdelta^kappa case 1}(a) and (c) remain valid with $\wh{P}(\theta),\,v(\theta)$ as stated in \eqref{eq:u(theta),v(theta) case 2A} and \eqref{eq:Phat(theta) case 2A}.
\item[(b)] State $-1$ is transient and $+1$ absorbing for $(\xi_{n})_{n\ge 0}$ on $\bbS^{0}$.
\end{itemize}
\end{Lemma}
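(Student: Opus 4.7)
The plan is to follow the structure of the proof of Lemma \ref{lem:Pdelta^kappa case 1} almost verbatim, with only cosmetic changes reflecting the fact that $P(\theta)$ is now upper triangular with $\rho(\theta)=\ppp(\theta)$, and that the driving chain is reducible. The point to verify at the outset is that the vectors $u(\theta)=e_{2}$ and $v(\theta)=(\pmp(\theta)/(\ppp(\theta)-\pmm(\theta)),1)^{\top}$ from \eqref{eq:u(theta),v(theta) case 2A} indeed satisfy $u(\theta)^{\top}P(\theta)=\rho(\theta)u(\theta)^{\top}$ and $P(\theta)v(\theta)=\rho(\theta)v(\theta)$ together with the normalization \eqref{eq:normalization eigenvectors}. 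In the hypothesis $\ppp(\theta)>\pmm(\theta)$ both components of $v(\theta)$ are strictly positive, so the measure change through \eqref{eq:def erw} is well-defined.

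With these eigenvectors in hand, the sequence $M_{n}:=e^{\theta S_{n}}v_{\xi_{n}}(\theta)\rho(\theta)^{-n}$ is a positive $(\cF_{n},\Prob_{\delta})$-martingale, since the usual one-step computation reduces to $\Erw_{\xi_{n-1}}\!\big[e^{\theta\zeta_{n}}v_{\xi_{n}}(\theta)\big]=[P(\theta)v(\theta)]_{\xi_{n-1}}=\rho(\theta)v_{\xi_{n-1}}(\theta)$. This is exactly what is needed to carry over part (a) of Lemma \ref{lem:Pdelta^kappa case 1}: the identical bookkeeping as in that proof shows that under $\Prob_{\delta}^{(\theta)}$ the process $(\xi_{n},\zeta_{n})_{n\ge 0}$ is Markov with transition operator $Q_{\theta}$ from \eqref{eq:Qtheta f}, that the driving chain has transition matrix $\wh{P}(\theta)$, and that $(\xi_{n},S_{n})_{n\ge 0}$ remains a \MRW\ with $S_{0}=0$. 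The explicit form \eqref{eq:Phat(theta) case 2A} is recovered by plugging the components of $v(\theta)$ into $\wh{P}(\theta)_{\delta\epsilon}=p_{\delta\epsilon}(\theta)v_{\epsilon}(\theta)/(\rho(\theta)v_{\delta}(\theta))$: the upper-triangularity is preserved, the $(+,+)$-entry becomes $\ppp(\theta)/\ppp(\theta)=1$, and the $(-,-)$-entry becomes $\pmm(\theta)/\ppp(\theta)$.

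Part (b) is then immediate from inspecting \eqref{eq:Phat(theta) case 2A}: the bottom row of $\wh{P}(\theta)$ is $(0,1)$, so state $+1$ is absorbing, while the top row gives $\wh{P}(\theta)_{-,+}=1-\pmm(\theta)/\ppp(\theta)>0$ under the standing hypothesis $\ppp(\theta)>\pmm(\theta)$. Hence each visit to state $-1$ has a positive, uniformly bounded below probability of being followed by an immediate transition to the absorbing state $+1$, from which there is no return; a trivial geometric argument makes $-1$ transient under every $\Prob_{\delta}^{(\theta)}$.

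There is no genuine obstacle here: once one has verified that $v(\theta)$ has strictly positive entries (so the harmonic transform makes sense) and that $v(\theta)$ is the Perron eigenvector of $P(\theta)$ at eigenvalue $\rho(\theta)=\ppp(\theta)$, every line of the proof of Lemma \ref{lem:Pdelta^kappa case 1} transfers without modification. The only conceptually new observation is in (b), and it follows purely from the form of the transformed transition matrix. The resulting lemma should therefore be stated without giving a separate proof, just as is done in the excerpt.
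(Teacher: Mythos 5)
Your proposal is correct and is exactly the argument the paper intends: the lemma is stated without proof precisely because, once one checks that $u(\theta)=e_{2}$ and $v(\theta)$ from \eqref{eq:u(theta),v(theta) case 2A} are the (strictly positive) left and right Perron eigenvectors for $\rho(\theta)=\ppp(\theta)$, the martingale change of measure and the Markov-property computation of Lemma \ref{lem:Pdelta^kappa case 1} transfer verbatim, and (b) is read off from \eqref{eq:Phat(theta) case 2A}. No gaps.
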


The fact that state $+$ is absorbing for $(\xi_{n})_{n\ge 0}$ entails that $(S_{n})_{n\ge 0}$ forms an ordinary random walk under $\IPplus^{(\theta)}$ and
\begin{align*}
\Eplus ^{(\theta)}g(\zeta_{1},\ldots,\zeta_{n})\ =\ \Eplus \big[e^{\theta  S_{n}}g(\zeta_{1},\ldots,\zeta_{n})\big]/\ppp(\theta)^{n}
\end{align*}
for any bounded measurable $g:\R^{n}\to\R$. Since $\wh{\pi}(\theta)=e_{2}$, the stationary drift of $(S_{n})_{n\ge 0}$ is given by
\begin{equation}\label{eq:drift positive case 2A}
\Erw_{\wh{\pi}(\theta)}^{(\theta)}S_{1}\ =\ \Eplus ^{(\theta)}S_{1}\ =\ \ppp'(\theta).
\end{equation}
and thus positive if $\ppp'(\theta)>0$.

\vspace{.2cm}
\textit{\bfseries Case 2B and 2C}. $\pmm(\theta)\ge\ppp(\theta)$.\\[1mm]
In the remaining two subcases 2B and 2C, which can be treated together, the chain $\Xi$ is constant under each $\Prob_{\delta}^{(\theta)}$ as defined below and thus $(S_{n})_{n\ge 0}$ is an ordinary random walk under these probability measures. The result is summarized in the subsequent lemma which we state again without proof. 

\begin{Lemma}\label{lem:Pdelta^kappa case 2BC}
For positive $\theta\in\bbD$ and $\delta\in\bbS^{0}$, define $\Prob_{\delta}^{(\theta)}$ on $\cF$ by
\begin{align*}
\Erw_{\delta}^{(\theta)}f((\xi_{1},\zeta_{1}),\ldots,(\xi_{n},\zeta_{n}))\ =\ \frac{\Erw_{\delta}\big[e^{\theta S_{n}}f((\xi_{1},\zeta_{1}),\ldots,(\xi_{n},\zeta_{n}))\1_{\{\xi_{n}=\delta\}}\big]}{p_{\delta\delta}(\theta)^{n}}
\end{align*}
for all $n\in\N$ and bounded measurable $f:(\bbS^{0}\times\R)^{n}\to\R$. Then the following holds true under each $\Prob_{\delta}^{(\theta)}$:
\begin{itemize}\itemsep2pt
\item[(a)] $\xi_{n}=\delta$ a.s.~for all $n\ge 0$.
\item[(b)] $(S_{n})_{n\ge 0}$ is an ordinary random walk with $S_{0}=0$ and drift
\begin{equation*}
\Erw_{\delta}^{(\theta)}S_{1}\ =\ \frac{p_{\delta\delta}'(\theta)}{p_{\delta\delta}(\theta)}
\end{equation*}
which equals $\pmm'(\theta)$ and is positive if $\delta=-1$ and $\pmm(\theta)=1$.
\end{itemize}
\end{Lemma}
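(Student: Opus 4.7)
The plan is to define the claimed measure $\Prob_\delta^{(\theta)}$ via a tilted-martingale construction and then read off assertions (a) and (b) directly from the structure of the tilted joint distribution. The crucial simplifying feature of Cases 2B and 2C is that $P$ is upper triangular ($\ppm=0$), which forces the event $\{\xi_n=\delta\}$ under $\Prob_\delta$ to coincide with $\{\xi_1=\ldots=\xi_n=\delta\}$: for $\delta=+1$ the chain starting at $+$ can only transit to $+$ or to the absorbing state $0$, while for $\delta=-1$ once the chain leaves $-$ it cannot return (since $\ppm=0$ and $0$ is absorbing). I would record this structural observation at the outset.

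Next, I would set $M_n:=e^{\theta S_n}\1_{\{\xi_n=\delta\}}/p_{\delta\delta}(\theta)^n$ and verify, using the Markov property, the i.i.d.\ structure of the $\Psi_k$, and the observation above, that $(M_n)$ is a non-negative $\Prob_\delta$-martingale with $\Erw_\delta M_n=1$. On $\{\xi_n=\delta\}$ the Markov property directly gives $\Erw_\delta[e^{\theta\zeta_{n+1}}\1_{\{\xi_{n+1}=\delta\}}\mid\cF_n]=p_{\delta\delta}(\theta)$, while on $\{\xi_n\ne\delta\}$ both $M_n$ and $\1_{\{\xi_{n+1}=\delta\}}$ vanish by the structural observation. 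Kolmogorov's extension theorem then produces $\Prob_\delta^{(\theta)}$ with density $M_n$ on $\cF_n$, and assertion (a) is immediate since $\1_{\{\xi_n=\delta\}}=\1_{\{\xi_1=\ldots=\xi_n=\delta\}}$ annihilates all other paths.

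For (b), I would compute, for any bounded measurable $g:\R^n\to\R$,
\begin{equation*}
\Erw_\delta^{(\theta)}g(\zeta_1,\ldots,\zeta_n)\ =\ \int g(z_1,\ldots,z_n)\prod_{k=1}^n\wh{\mu}_\delta(dz_k),
\end{equation*}
where the factorization uses the Markov property and the fact that, given $\xi_{k-1}=\delta$, the joint law of $(\1_{\{\xi_k=\delta\}},\zeta_k)$ equals that of $(\1_{\{\Adelta>0\}},\log|\Adelta|)$, and where $\wh{\mu}_\delta(dz):=p_{\delta\delta}(\theta)^{-1}e^{\theta z}\Prob[\log|\Adelta|\in dz,\,\Adelta>0]$ is a probability measure on $\R$ by the very definition of $p_{\delta\delta}(\theta)$. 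This identifies $(\zeta_k)_{k\ge 1}$ as i.i.d.\ under $\Prob_\delta^{(\theta)}$, so $(S_n)_{n\ge 0}$ is an ordinary random walk with drift
\begin{equation*}
\Erw_\delta^{(\theta)}\zeta_1\ =\ p_{\delta\delta}(\theta)^{-1}\,\Erw\!\left[\log|\Adelta|\,|\Adelta|^{\theta}\1_{\{\Adelta>0\}}\right]\ =\ \frac{p_{\delta\delta}'(\theta)}{p_{\delta\delta}(\theta)},
\end{equation*}
the differentiation under the integral being justified on the interior of $\bbD$.

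The last claim, positivity of the drift when $\delta=-1$ and $\pmm(\theta)=1$, reduces to $\pmm'(\theta)>0$. This I expect to be the one delicate point: log-convexity of $\pmm$ together with $\pmm(0)=\Prob[\Aminus>0]\le 1=\pmm(\theta)$ yields $\pmm'(\theta)\ge 0$, and the strict inequality follows from excluding the degenerate case $|\Aminus|=1$ a.s.\ on $\{\Aminus>0\}$, which is implicit in the non-arithmeticity and moment hypotheses of Thm.~\ref{thm:main case 2}. This parallels the argument given immediately after \eqref{eq:existence kappa} in Case 1 for $\rho'(\kappa)>0$.
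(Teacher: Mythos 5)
Your proof is correct, and it follows the route the paper itself intends: the lemma is stated in the paper \emph{without proof}, the only accompanying remark being the justification of $\pmm'(\kappaminus)>0$. Your key structural observation that $\{\xi_{n}=\delta\}=\{\xi_{1}=\cdots=\xi_{n}=\delta\}$ under $\Prob_{\delta}$ (forced by $\ppm=0$ together with the absorption at $0$) is exactly what makes the tilted finite-dimensional laws consistent and turns them into a product measure, and your identification of the increment law $\wh{\mu}_{\delta}$ and of the drift $p_{\delta\delta}'(\theta)/p_{\delta\delta}(\theta)$ is the standard computation.

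The one place where you are more roundabout than necessary is the positivity of the drift. You write $\pmm(0)\le 1$ and then appeal to non-arithmeticity to exclude a degenerate case. In the unilateral case one has $\pmp=\Prob[\Aminus<0]>0$, hence $\pmm(0)=\Prob[\Aminus>0]<1$ \emph{strictly}; convexity of $\pmm(\cdot)$ on $\bbD$ then gives $\pmm'(\kappaminus)\ge\big(\pmm(\kappaminus)-\pmm(0)\big)/\kappaminus>0$ directly, with no need to invoke the non-arithmeticity hypothesis. This is precisely the one-line argument the paper records after the lemma ($\pmm(0)<1$ and convexity).
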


For the very last assertion, we note that $\pmm(\theta)=1$, i.e. $\theta=\kappaminus$,
does indeed entail $\pmm'(\theta)>0$ because $\pmm(0)<1$ and $\pmm(\cdot)$ is a convex function on $\bbD$. If $\pmm(\theta)=\ppp(\theta)=1$ (Case 2C), then $\Eplus ^{(\theta)}S_{1}$ equals $\ppp'(\theta)$ and is also positive. However, positivity may fail otherwise.

\subsection{Renewal theorems}\label{subsec:MRT}

We are now ready to state the renewal theorems for the \MRW\ $(\xi_{n},S_{n})_{n\ge 0}$ with driving chain $\Xi=(\xi_{n})_{n\ge 0}$ needed to derive our main results stated in Section \ref{sec:main results}.

\vspace{.2cm}
\textit{\bfseries Case 1}. $\pmp\wedge\ppm>0$.\\[1mm]
We assume \eqref{eq:existence kappa}, thus $\rho(\kappa)=1$ for some $\kappa\in\bbD$.
By Lemma \ref{lem:Pdelta^kappa case 1}, $\Xi$ is an irreducible finite Markov chain under $\Prob_{\delta}^{(\kappa)}$. Therefore, the subsequent lemma for $(\xi_{n},S_{n})_{n\ge 0}$ follows from essentially any version of the Markov renewal theorem that has appeared in the literature, see e.g.~\cite{Jacod:71,Shurenkov:84,Athreya+et.al.:78a,Alsmeyer:94} and most recently \cite{Alsmeyer:14} where it is derived probabilistically from the classical Blackwell theorem. Only the usual lattice-type assumption requires a little more care:

\vspace{.1cm}
Following Shurenkov \cite{Shurenkov:84}, the \MRW\ $(\xi_{n},S_{n})_{n\ge 0}$ is called nonarithmetic with respect to the probability measures $\Prob_{\delta}^{(\kappa)}$, $\delta\in\bbS^{0}$, if
\begin{equation*}
\Prob_{\wh{\pi}(\kappa)}^{(\kappa)}(S_{1}\in g(\xi_{1})-g(\xi_{0})+d\Z)\ <\ 1
\end{equation*}
for all $g:\bbS^{0}\to\R$ and $d\in (0,\infty)$, where $d\Z:=\{0,\pm d,\pm 2d,\ldots\}$. Equivalently, see \cite[Lemma 3.3]{Alsmeyer:94}, $\Prob_{\delta}^{(\kappa)}(S_{\tau(\delta)}\in\cdot)$ is nonarithmetic in the usual sense for each $\delta\in\bbS^{0}$, where
$$ \tau(\delta)\,:=\,\inf\{n\ge 1:\xi_{n}=\delta\}. $$
Since $S_{1}=\log|{}^{\xi_{0}}\!A|$, we see that, putting $a_{\pm}:=g(\pm 1)$, \eqref{eq:log A nonlattice} in Thm.~\ref{thm:main case 1} is indeed equivalent to $(\xi_{n},S_{n})_{n\ge 0}$ being nonarithmetic.

\begin{Lemma}\label{lem:RThm case 1}
Given the stated assumptions, suppose further that the \MRW\ $(\xi_{n},S_{n})_{n\ge 0}$ is nonarithmetic. Then
\begin{equation}\label{eq:KRT case 1}
\sum_{n\ge 0}\Erw_{\epsilon}^{(\kappa)}g(\xi_{n},t-S_{n})\ \xrightarrow{t\to\infty}\ \frac{1}{\rho'(\kappa)}\sum_{\delta\in\bbS^{0}}\wh{\pi}_{\delta}(\kappa)\int_{\R}g(\delta,x)\ dx
\end{equation}
for any $\epsilon\in\bbS^{0}$ and any measurable $g:\bbS^{0}\times\R\to\R$ such that $x\mapsto g(\delta,x)$ is directly Riemann integrable (dRi) for each $\delta\in\bbS^{0}$.
\end{Lemma}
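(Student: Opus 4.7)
The plan is to verify that all the standing hypotheses needed for a Markov renewal theorem for the \MRW\ $(\xi_n,S_n)_{n\ge 0}$ are in place under $\Prob_\epsilon^{(\kappa)}$, and then simply invoke such a theorem. First, Lemma \ref{lem:Pdelta^kappa case 1} already gives that, under $\Prob_\epsilon^{(\kappa)}$, $(\xi_n,S_n)_{n\ge 0}$ is a Markov random walk whose driving chain $(\xi_n)_{n\ge 0}$ is an irreducible two-state Markov chain on $\bbS^{0}$ with transition matrix $\wh{P}(\kappa)$ and unique stationary distribution $\wh{\pi}(\kappa)$. Lemma \ref{lem:stationary drift} combined with $\rho(\kappa)=1$ then identifies the stationary drift as $\Erw^{(\kappa)}_{\wh{\pi}(\kappa)}S_1=\rho'(\kappa)$, which is positive and, under the moment condition \eqref{eq:moments AlogA B} (see the remark after Lemma \ref{lem:stationary drift}), also finite.

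Secondly, I would recall Shurenkov's nonarithmeticity condition for a \MRW, namely
$$\Prob_{\wh{\pi}(\kappa)}^{(\kappa)}\!\left(S_1\in g(\xi_1)-g(\xi_0)+d\Z\right)\ <\ 1$$
for all $g:\bbS^{0}\to\R$ and $d>0$. Since $S_1=\log|{}^{\xi_{0}}\!A|$ and $\bbS^{0}=\{-1,+1\}$, setting $a_\pm:=g(\pm 1)$ translates this verbatim into the hypothesis \eqref{eq:log A nonlattice} of Thm.~\ref{thm:main case 1}, as already pointed out in the paragraph preceding the lemma.

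With positive finite drift, a finite irreducible driving chain, nonarithmeticity, and direct Riemann integrability of $x\mapsto g(\delta,x)$ for each $\delta\in\bbS^{0}$, all hypotheses of any standard formulation of the Markov renewal theorem are met; for instance, the result follows from the probabilistic Blackwell-to-key-renewal reduction in \cite{Alsmeyer:14} or from the direct formulations in \cite{Jacod:71,Shurenkov:84,Athreya+et.al.:78a,Alsmeyer:94}. The identification of the limit as $\rho'(\kappa)^{-1}\sum_\delta\wh{\pi}_\delta(\kappa)\int g(\delta,x)\,dx$ is the standard form of the key renewal theorem in the Markov setting.

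The only genuine point requiring care is reconciling the various lattice-type and integrability conventions in the literature with ours; the finiteness of $\bbS^{0}$ renders this transparent, since componentwise direct Riemann integrability coincides with all of them. Beyond this bookkeeping, there is nothing deeper to prove: the work of Sections \ref{subsec:Cramer transform} and \ref{subsec:measure change}—namely constructing the harmonic transform, identifying $\wh{P}(\kappa)$ as a genuine stochastic matrix with stationary law $\wh{\pi}(\kappa)$, and computing the drift as $\rho'(\kappa)$—is precisely what makes the off-the-shelf Markov renewal theorem applicable here.
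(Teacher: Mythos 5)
Your proposal is correct and follows essentially the same route as the paper: the paper likewise gives no self-contained proof but verifies the hypotheses (irreducible finite driving chain under $\Prob_{\delta}^{(\kappa)}$ via Lemma \ref{lem:Pdelta^kappa case 1}, positive drift $\rho'(\kappa)$ via Lemma \ref{lem:stationary drift}, and Shurenkov's nonarithmeticity condition, which it identifies with \eqref{eq:log A nonlattice}) and then invokes a standard Markov renewal theorem from the cited literature. Nothing further is needed.
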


As a direct consequence of this lemma, we obtain a key renewal theorem for the two-sided \MRW\ $(\xi_{n},S_{n})_{n\ge 0}$ under the original probability measures $\Prob_{\epsilon}$.

\begin{Prop}\label{prop:RThm case 1}
Under the assumptions of the previous lemma,
\begin{equation}\label{eq:KRT2 case 1}
e^{\kappa t}\sum_{n\ge 0}\Erw_{\epsilon}g(\xi_{n},t-S_{n})\ \xrightarrow{t\to\infty}\ \frac{v_{\epsilon}(\kappa)}{\rho'(\kappa)}\sum_{\delta\in\bbS^{0}}u_{\delta}(\kappa)\int_{\R}e^{\kappa x}g(\delta,x)\ dx
\end{equation}
for any $\epsilon\in\bbS^{0}$ and any measurable $g:\bbS^{0}\times\R\to\R$ such that $x\mapsto e^{\kappa x}g(\delta,x)$ is dRi for each $\delta\in\bbS^{0}$.
\end{Prop}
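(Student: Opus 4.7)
The plan is to derive Proposition \ref{prop:RThm case 1} directly from Lemma \ref{lem:RThm case 1} by means of the change-of-measure formula \eqref{eq:def erw}, applied at $\theta=\kappa$ (so that $\rho(\kappa)=1$). The strategy is an exponential tilt: rewrite expectations under $\Prob_{\epsilon}$ as expectations under $\Prob_{\epsilon}^{(\kappa)}$, with the exponential factor $e^{\kappa t}$ absorbed into a modified integrand, and then invoke the Markov renewal theorem already available under the tilted measure.

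Concretely, I would first rewrite the inner expectation. Setting $\theta=\kappa$ in \eqref{eq:def erw} and inverting the Radon--Nikodym relation gives, for any bounded measurable $g:\bbS^{0}\times\R\to\R$,
\begin{equation*}
\Erw_{\epsilon}\,g(\xi_{n},t-S_{n})\ =\ v_{\epsilon}(\kappa)\,\Erw_{\epsilon}^{(\kappa)}\!\left[\frac{g(\xi_{n},t-S_{n})}{e^{\kappa S_{n}}\,v_{\xi_{n}}(\kappa)}\right].
\end{equation*}
Multiplying by $e^{\kappa t}$ and pulling the exponential inside to combine with $e^{-\kappa S_{n}}$ yields
\begin{equation*}
e^{\kappa t}\,\Erw_{\epsilon}\,g(\xi_{n},t-S_{n})\ =\ v_{\epsilon}(\kappa)\,\Erw_{\epsilon}^{(\kappa)}\,h(\xi_{n},t-S_{n}),\quad h(\delta,x)\,:=\,\frac{e^{\kappa x}\,g(\delta,x)}{v_{\delta}(\kappa)}.
\end{equation*}

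Next, I would sum over $n\ge 0$ and apply Lemma \ref{lem:RThm case 1} to the auxiliary function $h$. The hypothesis of that lemma requires $x\mapsto h(\delta,x)$ to be directly Riemann integrable for each $\delta\in\bbS^{0}$; since $v_{-}(\kappa),v_{+}(\kappa)$ are positive constants, this is equivalent to the dRi-assumption on $x\mapsto e^{\kappa x}g(\delta,x)$ imposed in the proposition. Lemma \ref{lem:RThm case 1} then delivers
\begin{equation*}
\sum_{n\ge 0}\Erw_{\epsilon}^{(\kappa)}h(\xi_{n},t-S_{n})\ \xrightarrow{t\to\infty}\ \frac{1}{\rho'(\kappa)}\sum_{\delta\in\bbS^{0}}\wh{\pi}_{\delta}(\kappa)\int_{\R}\frac{e^{\kappa x}\,g(\delta,x)}{v_{\delta}(\kappa)}\,dx.
\end{equation*}

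Finally, recalling from \eqref{eq:pihat(theta)} that $\wh{\pi}_{\delta}(\kappa)=u_{\delta}(\kappa)v_{\delta}(\kappa)$, the factors $v_{\delta}(\kappa)$ cancel and I obtain
\begin{equation*}
e^{\kappa t}\sum_{n\ge 0}\Erw_{\epsilon}\,g(\xi_{n},t-S_{n})\ \xrightarrow{t\to\infty}\ \frac{v_{\epsilon}(\kappa)}{\rho'(\kappa)}\sum_{\delta\in\bbS^{0}}u_{\delta}(\kappa)\int_{\R}e^{\kappa x}\,g(\delta,x)\,dx,
\end{equation*}
which is exactly \eqref{eq:KRT2 case 1}. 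The main (minor) technical point to verify is the interchange of limit and summation, i.e.\ that the passage from Lemma \ref{lem:RThm case 1} to its tilted version is legitimate for the function $h$; this is the reason for formulating the dRi-hypothesis on $e^{\kappa x}g(\delta,x)$ rather than on $g$ itself. Beyond this bookkeeping, the proof is a routine exponential change of measure — the real work has been done in Lemma \ref{lem:RThm case 1}.
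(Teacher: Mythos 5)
Your proposal is correct and coincides with the paper's own proof: the paper likewise rewrites $e^{\kappa t}\Erw_{\epsilon}g(\xi_{n},t-S_{n})$ via the tilt \eqref{eq:def erw} as $v_{\epsilon}(\kappa)\,\Erw_{\epsilon}^{(\kappa)}\big[e^{\kappa(t-S_{n})}g(\xi_{n},t-S_{n})/v_{\xi_{n}}(\kappa)\big]$, applies Lemma \ref{lem:RThm case 1} to exactly your function $h$, and cancels the $v_{\delta}(\kappa)$ using $\wh{\pi}_{\delta}(\kappa)=u_{\delta}(\kappa)v_{\delta}(\kappa)$. Your closing worry about interchanging limit and summation is moot, since Lemma \ref{lem:RThm case 1} already asserts convergence of the full sum once $h(\delta,\cdot)$ is dRi, which it is because $v_{\pm}(\kappa)>0$.
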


\begin{proof}
In view of the previous lemma, \eqref{eq:KRT2 case 1} follows from
\begin{align*}
e^{\kappa t}\sum_{n\ge 0}\Erw_{\epsilon}g(\xi_{n},t-S_{n})\ &=\ \sum_{n\ge 0}\Erw_{\epsilon}\left[\frac{e^{\kappa(t-S_{n})}}{v_{\xi_{n}}(\kappa)}g(\xi_{n},t-S_{n})e^{\kappa S_{n}}v_{\xi_{n}}(\kappa)\right]\\
&=\ \sum_{n\ge 0}v_{\epsilon}(\kappa)\Erw_{\epsilon}^{(\kappa)}\left[\frac{e^{\kappa(t-S_{n})}}{v_{\xi_{n}}(\kappa)}g(\xi_{n},t-S_{n})\right]\\
&\xrightarrow{t\to\infty}\ \frac{v_{\epsilon}(\kappa)}{\rho'(\kappa)}\sum_{\delta\in\bbS^{0}}\wh{\pi}_{\delta}(\kappa)\int_{\R}\frac{e^{\kappa x}}{v_{\delta}(\kappa)}g(\delta,x)\ dx
\end{align*}
when recalling that $\wh{\pi}_{\delta}(\kappa)=u_{\delta}(\kappa)v_{\delta}(\kappa)$.
\qed \end{proof}

\vspace{.2cm}
\textit{\bfseries Case 2}. $\ppm=0<\pmp$.\\[1mm]
Recall from before Thm.~\ref{thm:main case 2} that $\kappaminus$ and $\kappaplus$ are the unique positive numbers satisfying $\pmm(\kappaminus)=1$ and $\ppp(\kappaplus)=1$, respectively, provided that these numbers exist. Otherwise, we put $\kappa_{\pm}:=\infty$ but make the additional assumption that at least one of them is finite, thus
\begin{equation}\label{eq:kappa finite case 2}
\kappa\,:=\,\kappaminus\wedge\kappaplus\,<\,\infty.
\end{equation}

\vspace{.2cm}
\textit{\bfseries Case 2A}. $\ppm=0<\pmp$, \eqref{eq:kappa finite case 2} holds and $\ppp(\kappa)=1>\pmm(\kappa)$.\\[1mm]

In this case $\kappa=\kappaplus$ and $-1$ is a transient state for the chain $\Xi$. Therefore, $\Xi$ will eventually reach $+1$ or $0$. The state $0$ is absorbing and so $S_{n}=-\infty$ whenever $\xi_{n}=0$.  Put $\overline{\tau}:=\tau(0)\wedge\tau(1)$ and write $\tau$ as shorthand for $\tau(1)$. Defining
\begin{align*}\IUminus(D)\ :&=\ \sum_{n\ge 0}\IPminus[S_{n}\in D, \xi_{n}=-1]\\
&=\sum_{n\ge 0}\pmm^{n}\,\IPminus[S_{n}\in D|\xi_{n}=-1] \ =\ \Eminus\Bigg[\sum_{n=0}^{\overline{\tau}-1}\1_{\{S_{n}\in D\}}\Bigg]
\end{align*}
for measurable $D\subset\R$, we obtain the renewal measure associated with a defective distribution, namely $\pmm\,\IPminus[S_{1}\in\cdot|\xi_{1}=-1]$. On the other hand, $(S_{\tau+n})_{n\ge 0}$ forms an ordinary random walk on $\R\cup\{-\infty\}$ under both $\IPminus$ and $\IPplus$, with initial value $S_{\tau}$ and  increment distribution $\IPplus[\xi_{1}=1,S_{1}\in\cdot]=\IPplus[S_{1}\in\cdot]$. In particular
\begin{align*}
\sum_{n\ge 0}\IPminus[S_{n}\in D, \xi_{n}=+1]\,&=\,\Eminus\!\Bigg[\sum_{n\ge\tau}\1_{\{S_{n}\in D\}}\Bigg]\,=\,\Eminus\left[ \IUplus(D-S_{\tau}) 1_{\{\tau<\infty\}}\right]
\end{align*}
where
$$ \IUplus(D)\,:=\,\Eplus\Bigg[\sum_{n\ge 0}\1_{\{S_{n}\in D\}}\Bigg]. $$

After these observations, the subsequent result is proved by a standard combination of measure change with classical renewal theory. For an arbitrary function $g:\R\to\R$ and $\theta\in\R$, we put $g_{\theta}(x):=e^{\theta x}g(x)$ and stipulate as usual $\infty^{-1}:=0$. We further denote by $\cR_{\theta}$ the space of functions $g:\R\to\R$ such that $g_{\theta}$ is dRi.

\begin{Prop}\label{prop:RThm case 2A}
Under the stated assumptions, the following assertions hold true (with $\kappa=\kappaplus$):
\begin{gather}\small
e^{\theta t}\,g*\IUminus(t)\ =\ e^{\theta t}\,\Eminus\Bigg[\sum_{n=0}^{\tau-1}g(t-S_{n})\Bigg]\ \xrightarrow{|t|\to\infty}\ 0\label{eq:KRT case 2A-}
\end{gather}
for each $\theta>0$ such that $\pmm(\theta)<1$ and $g\in\cR_{\theta}$. If $\kappaminus<\infty$, $g\in\cR_{\kappaminus}$ and $\IPminus[S_{1}\in\cdot|\xi_{1}=-1]$ is nonarithmetic, then in addition to \eqref{eq:KRT case 2A-}
\begin{equation}\label{eq2:KRT case 2A-}
e^{\kappaminus t}\,g*\IUminus(t)\ \xrightarrow{t\to\infty}\ \frac{1}{\pmmprime(\kappaminus)}\int_{\R}g_{\kappaminus}(x)\ dx.
\end{equation}
Furthermore, if $S_{1}$ is nonarithmetic under $\IPplus$, then
\begin{gather}\small
e^{\kappa t}\,\Eminus\left[\sum_{n\ge\tau}g(t-S_{n})\right]\ \xrightarrow{t\to\infty}\ \frac{\pmp(\kappa)}{(1-\pmm(\kappa))\pppprime(\kappa)}\int_{\R}g_{\kappa}(x)\ dx\label{eq:KRT case 2A-+}
\shortintertext{and}\small
e^{\kappa t}\,g*\IUplus(t)\ =\ e^{\kappa t}\,\Eplus\left[\sum_{n\ge 0}g(t-S_{n})\right]\ \xrightarrow{t\to\infty}\ \frac{1}{\pppprime(\kappa)}\int_{\R}g_{\kappa}(x)\ dx\label{eq:KRT case 2A+}
\end{gather}
for any function $g\in\cR_{\kappa}$.
\end{Prop}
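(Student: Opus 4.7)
The overall strategy is to reduce every assertion to classical renewal theorems for ordinary random walks by applying the exponential change of measure of Lemma \ref{lem:Pdelta^kappa case 2BC} (and its analogue with $\delta=+$). In the upper triangular regime of Case 2A the chain $\Xi$ under $\Prob_{-}^{(\theta)}$ (resp.~$\Prob_{+}^{(\theta)}$) is degenerate at $-1$ (resp.~$+1$), so that $(S_{n})_{n\ge 0}$ becomes a genuine ordinary random walk on $\R$; the Markov renewal theorem is thereby bypassed entirely. The nonarithmeticity hypothesis on $\IPminus[S_{1}\in\cdot\mid\xi_{1}=-1]$ (resp.~$\IPplus[S_{1}\in\cdot]$) transfers to the tilted walk because the tilted law is absolutely continuous with strictly positive density with respect to the original defective law $\pmm\IPminus[S_{1}\in\cdot\mid\xi_{1}=-1]$ (resp.~the probability law $\IPplus[S_{1}\in\cdot]$), hence has the same support.

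For \eqref{eq:KRT case 2A-} and \eqref{eq2:KRT case 2A-} I would first write, using Lemma \ref{lem:Pdelta^kappa case 2BC} with $\delta=-$ and the identity $e^{\theta t}g(t-s)=e^{\theta s}g_{\theta}(t-s)$,
\begin{equation*}
e^{\theta t}\,g*\IUminus(t)\,=\,\sum_{n\ge 0}\Eminus\big[e^{\theta S_{n}}g_{\theta}(t-S_{n})\1_{\{\xi_{n}=-1\}}\big]\,=\,\sum_{n\ge 0}\pmm(\theta)^{n}\,\Eminus^{(\theta)}[g_{\theta}(t-S_{n})].
\end{equation*}
If $\pmm(\theta)<1$, dRi implies $g_{\theta}$ is bounded and vanishes at $\pm\infty$, so each term tends to $0$ as $|t|\to\infty$ by bounded convergence (with $S_{n}$ a.s. finite under $\Prob_{-}^{(\theta)}$), and geometric summability of $\pmm(\theta)^{n}\|g_{\theta}\|_{\infty}$ yields \eqref{eq:KRT case 2A-} by dominated convergence on $\N$. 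For $\theta=\kappaminus$, i.e.~$\pmm(\theta)=1$, the drift $\pmm'(\kappaminus)/\pmm(\kappaminus)=\pmm'(\kappaminus)$ is finite and positive (Lemma \ref{lem:Pdelta^kappa case 2BC}(b)), and the classical key renewal theorem on $\R$ applied to the nonarithmetic random walk $(S_{n})$ under $\Prob_{-}^{(\kappaminus)}$ delivers \eqref{eq2:KRT case 2A-}.

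For \eqref{eq:KRT case 2A+} the same tilting with $\delta=+$ and $\ppp(\kappa)=1$ gives
\begin{equation*}
e^{\kappa t}\,g*\IUplus(t)\,=\,\sum_{n\ge 0}\Eplus^{(\kappa)}[g_{\kappa}(t-S_{n})],
\end{equation*}
and since the tilted walk has drift $\ppp'(\kappa)>0$ and is nonarithmetic, the classical key renewal theorem produces the stated limit. For \eqref{eq:KRT case 2A-+} I would decompose the post-$\tau$ portion by the strong Markov property: on $\{\tau<\infty\}$, $(S_{\tau+k}-S_{\tau})_{k\ge 0}$ is an ordinary random walk independent of $\cF_{\tau}$ with $\Prob_{+}$-distributed increments, so
\begin{equation*}
\Eminus\!\left[\sum_{n\ge\tau}g(t-S_{n})\right]\,=\,\Eminus\!\left[\1_{\{\tau<\infty\}}(g*\IUplus)(t-S_{\tau})\right].
\end{equation*}
Multiplying by $e^{\kappa t}=e^{\kappa S_{\tau}}e^{\kappa(t-S_{\tau})}$ and letting $h(s):=e^{\kappa s}(g*\IUplus)(s)$, the already proved \eqref{eq:KRT case 2A+} shows $h(s)\to c:=\pppprime(\kappa)^{-1}\int g_{\kappa}$ as $s\to\infty$. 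A geometric computation exploiting the Markov property yields
\begin{equation*}
\Eminus\!\big[e^{\kappa S_{\tau}}\1_{\{\tau<\infty\}}\big]\,=\,\sum_{n\ge 1}\pmm(\kappa)^{n-1}\pmp(\kappa)\,=\,\frac{\pmp(\kappa)}{1-\pmm(\kappa)},
\end{equation*}
which is finite by the assumptions $\pmm(\kappa)<1$ and $\pmp(\kappa)<\infty$. A final dominated convergence argument then gives \eqref{eq:KRT case 2A-+}.

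The main obstacle is justifying the dominated convergence step in the proof of \eqref{eq:KRT case 2A-+}: one needs $h$ to be uniformly bounded on $\R$ in order to dominate $\1_{\{\tau<\infty\}}e^{\kappa S_{\tau}}h(t-S_{\tau})$ by the integrable variable $\|h\|_{\infty}e^{\kappa S_{\tau}}\1_{\{\tau<\infty\}}$. Uniform boundedness of $h$ is a standard but nontrivial byproduct of \eqref{eq:KRT case 2A+}: since $h(s)=\sum_{n\ge 0}\Eplus^{(\kappa)}[g_{\kappa}(s-S_{n})]$ is the convolution of a dRi function with the renewal measure of a random walk with positive drift, it is bounded on $\R$, converges to $c$ at $+\infty$ and to $0$ at $-\infty$. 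Modulo this standard fact, the remaining verifications (nonarithmeticity transfer, the geometric series for $\Eminus[e^{\kappa S_{\tau}}\1_{\{\tau<\infty\}}]$, and the validity of the measure change in each regime) are routine.
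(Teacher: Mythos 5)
Your proposal is correct and follows essentially the same route as the paper: exponential tilting to turn each piece into an ordinary (possibly defective) renewal problem, the key renewal theorem for \eqref{eq2:KRT case 2A-} and \eqref{eq:KRT case 2A+}, and the strong Markov decomposition at $\tau$ combined with the geometric identity $\Eminus[e^{\kappa S_{\tau}}\1_{\{\tau<\infty\}}]=\pmp(\kappa)/(1-\pmm(\kappa))$ and dominated convergence for \eqref{eq:KRT case 2A-+}. The boundedness of $h=g_{\kappa}*\IUplus^{(\kappa)}$ that you flag as the key domination input is exactly the fact the paper invokes ("if $g_{\kappa}$ is dRi, then $g*\IUplus^{(\kappa)}(t)$ is bounded"), so no gap remains.
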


\begin{proof}
Then
$$ \bUminus^{(\theta)}\,:=\,\Eminus\left[\sum_{n\ge 0}e^{\theta S_{n}}\1_{\{S_{n}\in\cdot\,,\,\xi_{n}=-1\}}\right]\ =\ \sum_{n\ge 0}\pmm(\theta)^{n}\,\IPminus^{(\theta)}[S_{1}\in\cdot|\xi_{1}=-1]^{*n} $$
defines a defective and thus finite ordinary renewal measure. As a consequence,
\begin{align*}
e^{\theta t}\,g*\IUminus(t)\ &=\ \Eminus\left[\sum_{n\ge 0}e^{\theta S_{n}}g_{\theta}(t-S_{n})\1_{\{\xi_{n}=-1\}}\right]\ =\ g_{\theta}*\bUminus^{(\theta)}(t)
\end{align*}
converges to 0 as $t\to\infty$ for any $g\in\cR_{\theta}$, which proves \eqref{eq:KRT case 2A-}.

\vspace{.1cm}
If $\kappaminus<\infty$, then $\pmm(\kappaminus)=1$ and therefore, by \eqref{eq:def P(theta)},
$$ \IPminus^{(\kappaminus)}[S_{1}\in D]\,:=\,\Eminus[e^{\kappaminus S_{1}}\1_{D}(S_{1})\1_{\{\xi_{1}=-1\}}] $$
for measurable $D\subset\R$ defines a probability measure. Moreover,
\begin{gather*}
e^{\kappaminus t}\,g*\IUminus(t)=\Eminus\left[\sum_{n=0}^{\infty}e^{\kappaminus S_{n}}g_{\kappaminus}(t-S_{n})\1_{\{\xi_{n}=-1\}}\right]=g_{\kappaminus}*\IUminus^{(\kappaminus)}(t),
\shortintertext{where}
\IUminus^{(\kappaminus)}\,:=\,\sum_{n\ge 0}\IPminus^{(\kappaminus)}[S_{1}\in\cdot]^{*n}.
\end{gather*}
Hence, \eqref{eq2:KRT case 2A-} follows by another appeal to the key renewal theorem.

\vspace{.1cm}
Turning to \eqref{eq:KRT case 2A-+} and \eqref{eq:KRT case 2A+} which can be shown together, we note that
\begin{align*}
e^{\kappa t}\,\Eminus\Bigg[\sum_{n\ge\tau}g(t-S_{n})\Bigg]\ &=\ \Eminus\Bigg[\sum_{n\ge\tau}e^{\kappa S_{n}}g_{\kappa}(t-S_{n})\Bigg]\\
&=\ \Eminus\Big[ e^{\kappa S_\tau}\,g_{\kappa}*\IUplus^{(\kappa)}(t-S_\tau)\1_{\{\tau<\infty\}}\Big]
\end{align*}
where
$$ \IUplus^{(\kappa)}\,:=\,\sum_{n\ge 0}\IPplus^{(\kappa)}[S_{n}\in\cdot,\xi_{n}=1]\ =\ \sum_{n\ge 0}\IPplus^{(\kappa)}[S_{n}\in\cdot]\ =\ \sum_{n\ge 0}\IPplus^{(\kappa)}[S_{1}\in\cdot]^{*n} $$
is an ordinary renewal measure of a random walk with nonarithmetic increment distribution $\IPplus[S_{1}\in\cdot]$ and positive drift $\pppprime(\kappa)$. We also note that $g_{\kappa}*\IUplus^{(\kappa)}(t)=e^{\kappa t}\,g_{\kappa}*\IUplus(t)$. Hence, if $g_{\kappa}$ is dRi, then $g*\IUplus^{(\kappa)}(t)$ is bounded and converges to the limit stated in \eqref{eq:KRT case 2A+} by the key renewal theorem. Further, it then follows by the dominated convergence theorem that
$$ \Eminus\Big[e^{\kappa S_\tau}\,g*\IUplus^{(\kappa)}(t-S_\tau) 1_{\{\tau<\infty\}}\Big] \ \xrightarrow{t\to\infty}\ \frac{\Eminus e^{\kappa S_{\tau}}1_{\{\tau<\infty\}}}{\pppprime(\kappa)}\int_{\R}g_{\kappa}(x)\ dx $$
and thereby \eqref{eq:KRT case 2A-+} if
\begin{equation*}
\Eminus e^{\kappa S_{\tau}}\1_{\{\tau<\infty\}}\ =\ \frac{\pmp(\kappa)}{1-\pmm(\kappa)}.
\end{equation*}
But this follows from
\begin{align*}
\Eminus e^{\kappa S_{\tau}}\1_{\{\tau<\infty\}}\ &=\ \sum_{n\ge 1}\Eminus\big[e^{\kappa S_{n}}\1_{\{\tau=n\}}\big]\ =\,\sum_{n\ge 1}\IPminus^{(\kappa)}[\tau=n]\\
&=\ \sum_{n\ge 1}\pmm(\kappa)^{n-1}\ppm(\kappa)\ =\ \frac{\pmp(\kappa)}{1-\pmm(\kappa)}
\end{align*}
which completes the proof.
\qed \end{proof}

\vspace{.2cm}
\textit{\bfseries Case 2B}. $\ppm=0<\pmp$, \eqref{eq:kappa finite case 2} holds and $\pmm(\kappa)=1>\ppp(\kappa)$.\\[1mm] 
In this case $\kappa=\kappaminus$ and $(S_{n})_{n\ge 0}$ forms an ordinary random walk under each $\Prob_{\delta}^{(\kappa)}$ because $\Prob_{\delta}^{(\kappa)}[\xi_{n}=\delta$ for all $n\ge 0]=1$ (Lemma \ref{lem:Pdelta^kappa case 2BC}). With $\IUminus,\IUplus$ as defined before, the following result holds and is again shown by standard renewal-theoretic arguments. In order to state it, we need to define
$$ \IUminplus(D)\,:=\,\sum_{n\ge 1}\IPminus[\xi_{1}=1,\,S_{n}\in D]=\Eminus\big[\IUplus(D-S_{1})\1_{\{\xi_{1}=1\}}\big] $$
for measurable $D\subset\R$. For later use (see the proof of Thm.~\ref{thm:main case 2}), we also point out that
\begin{align}
\begin{split}\label{eq:mgf Uminplus at kappa}
\int_{\R}e^{\kappa x}\ \IUminplus(dx)\ &=\ \sum_{n\ge 1}\Eminus\big[\1_{\{\xi_{1}=1\}}e^{\kappa S_{n}}\big]\\
&=\ \Eminus\big[\1_{\{\xi_{1}=1\}}e^{\kappa S_{1}}\big]\sum_{n\ge 1}\Eplus e^{\kappa S_{n-1}}\\
&=\ \frac{\pmp(\kappa)}{1-\ppp(\kappa)}.
\end{split}
\end{align}
Hence, it is finite iff $\pmp(\kappa)<\infty$, as $\ppp(\kappa)<\pmm(\kappa)=1$.

\begin{Prop}\label{prop:RThm case 2B}
Under the stated assumptions, the following assertions hold true for any function $g:\R\to\R$ such that $g\in\cR_{\kappa}$ (with $\kappa=\kappaminus$): If $S_{1}$ is nonarithmetic under $\IPminus$, then $g*\IUminplus\in\cR_{\kappa}$ and
\begin{gather}
e^{\kappa t}\,g*\IUminus(t)\ \xrightarrow{t\to\infty}\ \frac{1}{\pmmprime(\kappa)}\int_{\R}g_{\kappa}(x)\ dx.\label{eq:KRT case 2B-}\\
\shortintertext{Moreover,}
e^{\theta t}\,g*\IUplus(t)\ =\ e^{\theta t}\,\Eplus\left[\sum_{n\ge 0}g(t-S_{n})\right]\ \xrightarrow{|t|\to\infty}\ 0,\label{eq:KRT case 2B+}
\end{gather}
for each $\theta>0$ such that $\ppp(\theta)<1$, $\pmp(\theta)<\infty$ and $g\in\cR_{\theta}$. If $\ppp(\theta)<1$, $\pmp(\theta)<\infty$ hold for some $\theta\in (\kappaminus,\kappaplus)$ and $g\in\cR_{\kappa}$, then $g*\IUminplus\in\cR_{\kappa}$ and
\begin{align}
\begin{split}\label{eq:KRT case 2B-+}
e^{\kappa t}\,\Eminus g*\IUplus(t-S_{\tau})\1_{\{\tau<\infty\}}\ =~&e^{\kappa t}\,\Eminus\left[\sum_{n\ge 0}g(t-S_{\tau+n})\1_{\{\tau<\infty\}}\right]\\
\xrightarrow{t\to\infty}~&\frac{1}{\pmmprime(\kappa)}\int_{\R}(g*\IUminplus)_{\kappa}(x)\ dx
\end{split}
\end{align}
Finally, if $\kappaplus<\infty$, $\IPplus[S_{1}\in\cdot|\xi_{1}=1]$ is nonarithmetic and $g\in\cR_{\kappaplus}$, then
\begin{equation}
e^{\kappaplus t}\,g*\IUplus(t)\ \xrightarrow{t\to\infty}\ \frac{1}{\pppprime(\kappaplus)}\int_{\R}g_{\kappaplus}(x)\ dx\label{eq2:KRT case 2B++}
\end{equation}
holds in addition to \eqref{eq:KRT case 2B+}.
\end{Prop}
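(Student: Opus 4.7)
The plan is to mirror the proof of Proposition \ref{prop:RThm case 2A}: each of the four convergences is reduced, via the change of measure of Lemma \ref{lem:Pdelta^kappa case 2BC}, to a convolution $h*M(t)$ of an exponentially tilted dRi function $h$ with either a finite defective renewal measure (so that dominated convergence delivers a vanishing limit) or the renewal measure of a proper nonarithmetic random walk with positive drift (so that the classical Blackwell key renewal theorem applies). Triangularity of $P(\theta)$ in Case 2B and the resulting product identity $\Erw_{\delta}[e^{\theta S_n}\1_{\{\xi_n=\delta\}}]=p_{\delta\delta}(\theta)^n$ are the basic algebraic ingredients throughout.

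\textbf{Proof of \eqref{eq:KRT case 2B-} and \eqref{eq2:KRT case 2B++}.} Since $\pmm(\kappa)=1$, the measure $\IPminus^{(\kappa)}[S_1\in\cdot]$ is a proper probability, and under $\Prob_-^{(\kappa)}$ we have $\xi_n\equiv -$ while $(S_n)_{n\ge 0}$ is an ordinary nonarithmetic random walk with positive drift $\pmmprime(\kappa)$. Writing
\begin{equation*}
e^{\kappa t}\,g*\IUminus(t)\,=\,\Eminus\sum_{n\ge 0}e^{\kappa S_n}g_{\kappa}(t-S_n)\1_{\{\xi_n=-\}}\,=\,g_{\kappa}*U^{(\kappa)}(t)
\end{equation*}
with $U^{(\kappa)}:=\sum_{n\ge 0}\IPminus^{(\kappa)}[S_1\in\cdot]^{*n}$ and invoking the classical key renewal theorem yields \eqref{eq:KRT case 2B-}. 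The proof of \eqref{eq2:KRT case 2B++} is completely parallel, working at state $+$ with parameter $\kappaplus$ and using $\ppp(\kappaplus)=1$.

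\textbf{Proof of \eqref{eq:KRT case 2B+}.} For any $\theta>0$ with $\ppp(\theta)<1$, the tilted measure $\mu_{\theta}(dx):=\Eplus[e^{\theta S_1}\1_{\{\xi_1=+\}}\1_{\{S_1\in dx\}}]$ is defective with total mass $\ppp(\theta)<1$, so $V_{\theta}:=\sum_{n\ge 0}\mu_{\theta}^{*n}$ is a finite measure on $\R$. Since $\Eplus[e^{\theta S_n}\1_{\{\xi_n=+\}}]=\ppp(\theta)^n$, one has $e^{\theta t}\,g*\IUplus(t)=g_{\theta}*V_{\theta}(t)$, and because $g\in\cR_{\theta}$ makes $g_{\theta}$ bounded with $g_{\theta}(x)\to 0$ at $\pm\infty$, dominated convergence yields $g_{\theta}*V_{\theta}(t)\to 0$ as $|t|\to\infty$.

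\textbf{Proof of \eqref{eq:KRT case 2B-+} and main obstacle.} This is the only genuinely new step. By conditioning on $\{\tau=m\}=\{\xi_1=\cdots=\xi_{m-1}=-,\,\xi_m=+\}$ and successively applying the Markov property at times $m-1$ and $m$ together with the defining identity $g*\IUminplus(u)=\Eminus[\1_{\{\xi_1=+\}}\,g*\IUplus(u-S_1)]$, one obtains
\begin{equation*}
\Eminus[g*\IUplus(t-S_{\tau})\1_{\{\tau<\infty\}}]\,=\,(g*\IUminplus)*V(t),
\end{equation*}
where $\mu(ds):=\IPminus[\xi_1=-,S_1\in ds]$ and $V:=\sum_{m\ge 0}\mu^{*m}$. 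Exponential tilting then turns $\mu$ into $\widetilde{\mu}(ds):=e^{\kappa s}\mu(ds)$, which has total mass $\pmm(\kappa)=1$ and coincides with the one-step law of $(S_n)$ under $\Prob_-^{(\kappa)}$, so it is nonarithmetic with positive drift $\pmmprime(\kappa)$. Consequently $\widetilde{V}:=\sum_{m\ge 0}\widetilde{\mu}^{*m}$ is the renewal measure of a proper random walk, and
\begin{equation*}
e^{\kappa t}\,(g*\IUminplus)*V(t)\,=\,(g*\IUminplus)_{\kappa}*\widetilde{V}(t)\,\xrightarrow{t\to\infty}\,\frac{1}{\pmmprime(\kappa)}\int_{\R}(g*\IUminplus)_{\kappa}(x)\,dx
\end{equation*}
by the classical key renewal theorem, provided $(g*\IUminplus)_{\kappa}$ is dRi. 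The latter claim $g*\IUminplus\in\cR_{\kappa}$ follows from \eqref{eq:mgf Uminplus at kappa}: the hypothesis $\pmp(\theta)<\infty$ for some $\theta>\kappaminus$ forces $\pmp(\kappa)<\infty$ by log-convexity, so $\widetilde{\IUminplus}(ds):=e^{\kappa s}\IUminplus(ds)$ is a finite measure; since $(g*\IUminplus)_{\kappa}=g_{\kappa}*\widetilde{\IUminplus}$ and $g_{\kappa}$ is dRi, direct Riemann integrability is preserved under convolution with a finite measure. The main technical obstacle is the correct identification of the decomposition as $(g*\IUminplus)*V$, which rests on carefully tracking the last visit of $\xi$ to state $-$ before absorption at $+$; once this is established, the exponential tilt and the key renewal theorem deliver the remaining work routinely.
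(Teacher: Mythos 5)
Your proposal is correct and follows the same architecture as the paper's proof: the first, second and fourth limits are handled exactly as in Prop.~\ref{prop:RThm case 2A} (tilting to a proper nonarithmetic random walk with drift $\pmmprime(\kappa)$, resp.\ $\pppprime(\kappaplus)$, or to a defective renewal measure), and for \eqref{eq:KRT case 2B-+} you use the identical last-exit decomposition at state $-$, yielding $\Eminus\big[g*\IUplus(t-S_{\tau})\1_{\{\tau<\infty\}}\big]=(g*\IUminplus)*\IUminus(t)$ followed by the $\kappa$-tilt and the key renewal theorem. The one place where you diverge is the verification that $(g*\IUminplus)_{\kappa}$ is dRi, which is the substantive new step of this proof: the paper derives an explicit exponential bound $(g*\IUminplus)_{\kappa}(t)\le C\big(\pmp(\kappa-\eps)\vee\pmp(\kappa+\eps)\big)e^{-\eps|t|}$ by applying \eqref{eq:KRT case 2B+} with $\theta=\kappa\pm\eps$ (using $\kappa+\eps\le\theta$ to get $\ppp(\kappa+\eps)<1$ and $\pmp(\kappa+\eps)<\infty$), whereas you factor $(g*\IUminplus)_{\kappa}=g_{\kappa}*\widetilde{\U}$ with $\widetilde{\U}(ds)=e^{\kappa s}\IUminplus(ds)$ finite by \eqref{eq:mgf Uminplus at kappa} and invoke the preservation of direct Riemann integrability under convolution with a finite measure. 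Both routes are valid; yours is shorter and uses the hypothesis $\pmp(\theta)<\infty$ only through convexity to get $\pmp(\kappa)<\infty$, but it rests on the convolution-stability lemma for dRi functions, which you assert without proof (it is standard, and its a.e.-continuity part requires the same Fubini argument that the paper also leaves implicit), while the paper's exponential bound makes the decay of $(g*\IUminplus)_{\kappa}$ completely explicit.
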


\begin{proof}
In view of the proof of Prop.~\ref{prop:RThm case 2A}, only \eqref{eq:KRT case 2B-+} needs our attention.
Without loss of generality, let $g$ be nonnegative. Note also that $g$ is $\llam$-almost everywhere continuous ($\llam$ Lebesgue measure) because $g_{\kappa}$ is dRi. For each $t\in\R$, we have
\begin{align*}
e^{\kappa t}\,&\Eminus g*\IUplus(t-S_{\tau})\1_{\{\tau<\infty\}}\ =~e^{\kappa t}\,\Eminus\Bigg[\sum_{n\ge 0}g(t-S_{\tau+n})\1_{\{\tau<\infty\}}\Bigg]\\
&=\ e^{\kappa t}\sum_{k\ge 0}\Eminus\big[\1_{\{\xi_{k}=-1,\,\xi_{k+1}=1\}}\,g*\IUplus(t-S_{k}-X_{k+1})\big]\\
&=\ e^{\kappa t}\sum_{k\ge 0}\Eminus\big[\1_{\{\overline{\tau}>k\}}\,g*\IUminplus(t-S_{k})\big]\\
&=\ e^{\kappa t}\,\Eminus\Bigg[\sum_{k=0}^{\overline{\tau}-1}\,g*\IUminplus(t-S_{k})\Bigg]\ =\ (g*\IUminplus)_{\kappa}*\IUminus^{(\kappa)}(t).
\end{align*}
Hence, \eqref{eq:KRT case 2B-+}  follows by the key renewal theorem if we can verify that $(g*\IUminplus)_{\kappa}$ is dRi. To this end, pick $\eps>0$ so small that $\kappa+\eps\le\theta$, thus $\ppp(\kappa+\eps)<1$ and
$$ \pmp(\kappa + \eps)\ =\ \Eminus\big[\1_{\{\xi_{1}=1\}}e^{(\kappa + \eps)S_{1}}\big]\ <\ \infty. $$
Since $g*\IUminplus(t)=\Eminus\big[g*\IUplus(t-S_{1})\1_{\{\xi_{1}=1\}}\big]$
for $t\in\R$, we infer with the help of \eqref{eq:KRT case 2B+} that
\begin{align*}
(g*\IUminplus)_{\kappa}(t)\ &=\ e^{\kappa t}\,g*\IUminplus(t)\\
&\le\ C\,e^{\kappa t}\,\Eminus\big[e^{-(\kappa\pm\eps)(t-S_{1})}\1_{\{\xi_{1}=1\}}\big]\\
&\le\ C\,\big(\pmp(\kappa-\eps)\vee\pmp(\kappa+\eps)\big)\,e^{-\eps|t|}
\end{align*}
for some $C\in\IRg$, and this in combination with the $\llam$-almost everywhere continuity of $g$ mentioned above yields the desired result.
\qed \end{proof}

\section{Proof of Thm.~\ref{thm:main case 1}}\label{sec:proof main case 1}

For $\phi\in\cC^{*}(\R)$, let as usual $\|\phi\|_{\infty}$ be its supremum norm and $K_{\phi}$ the maximal positive value such that
\begin{equation}\label{eq:def Kphi}
|\phi(x)-\phi(y)|\ \le\ \Lip(\phi)\,|x-y|\,\1_{[K_{\phi},\infty)}(|x|\vee |y|)
\end{equation}
for all $x,y\in\IRg$. Note that in addition to \eqref{eq:def Kphi}, we have
\begin{equation}\label{eq:2nd estimate phi}
|\phi(x)-\phi(y)|\ \le\ 2\,\|\phi\|_{\infty}\,\1_{[K_{\phi},\infty)}(|x|\vee |y|)
\end{equation}
for all $x,y\in\IRg$. Given a stationary distribution $\nu$ of $(X_{n})_{n\ge 0}$, let $R$ be a generic random variable with this law independent of all other occurring random variables, notably $\Psi,\Lambda,\Aminus,\Aplus$ and $B$.

\vspace{.2cm}
The following two lemmata do not require the assumption $\pmp\wedge\ppm>0$ and will also be used in the proof of our main result in the unilateral case.

\begin{Lemma}\label{lem:moments of R}
Assuming $\rho(\theta)<1$ and $\Erw B^{\theta} <\infty$, the random variable $R$ with law $\nu$ satisfies $\Erw|R|^{\theta}<\infty$ for all $0<\theta<\kappa$.
\end{Lemma}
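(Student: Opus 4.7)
The plan is to establish a one-step recursive inequality for the truncated moment $\phi(M):=\Erw[|R|^{\theta}\wedge M]$, which is \emph{a priori} finite, and then to extract a uniform bound in $M$ via the concavity of $\phi$. By Lemma \ref{lem:Ptheta vs Lambda}, $\Erw L_{n}^{\theta}\to 0$ as $n\to\infty$ with $L_{n}:=\Lip(\Lambda_{n}\cdots\Lambda_{1})$, so we would fix $n$ large enough that $c:=\Erw L_{n}^{\theta}<1$ when $0<\theta\le 1$ (and so that $2^{\theta-1}c<1$ when $\theta>1$). The finiteness of $\Erw Y_{n}^{\theta}$ for the error term $Y_{n}$ from Lemma \ref{lem:comparison with LIFS} then follows from $\Erw B^{\theta}<\infty$ by subadditivity (if $\theta\le 1$) or Minkowski's inequality (if $\theta>1$), reducing the matter to the finite sum $\sum_{k<n}\Erw L_{k}^{\theta}$.

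By stationarity, $R\eqdist X_{n}^{R}$ with $R$ independent of $(\Psi_{k})_{1\le k\le n}$, and Lemma \ref{lem:comparison with LIFS} yields $|X_{n}^{R}|\le L_{n}|R|+Y_{n}$. Combining $(a+b)^{\theta}\le a^{\theta}+b^{\theta}$ with the elementary inequality $(u+v)\wedge M\le(u\wedge M)+v$ for $u,v\ge 0$, the independence of $L_{n}$ and $R$, and the identity $\Erw[\ell^{\theta}|R|^{\theta}\wedge M]=\ell^{\theta}\phi(M/\ell^{\theta})$, I would deduce from $\phi(M)=\Erw[|X_{n}^{R}|^{\theta}\wedge M]$ that
\begin{equation*}
\phi(M)\,\le\,\Erw\bigl[L_{n}^{\theta}\phi(M/L_{n}^{\theta})\bigr]+\Erw Y_{n}^{\theta}.
\end{equation*}

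The next step uses that $\phi(M)=\int_{0}^{M}\Prob[|R|^{\theta}>t]\,dt$ is nondecreasing and concave with $\phi(0)=0$. Applying the tangent inequality $\phi(x)\le\phi(M)+\phi'(M)(x-M)$ at $x=M/L_{n}^{\theta}$ and multiplying by $L_{n}^{\theta}$ gives
\begin{equation*}
L_{n}^{\theta}\phi(M/L_{n}^{\theta})\,\le\, L_{n}^{\theta}\phi(M)+\phi'(M)\,M\,(1-L_{n}^{\theta}).
\end{equation*}
Taking expectations and rearranging turn the recursive bound into $(1-c)\bigl(\phi(M)-M\phi'(M)\bigr)\le\Erw Y_{n}^{\theta}$. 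Since $\phi'(M)=\Prob[|R|^{\theta}>M]$ one obtains the telescoping identity $\phi(M)-M\phi'(M)=\Erw[|R|^{\theta}\1_{\{|R|^{\theta}\le M\}}]$, so letting $M\to\infty$ and invoking monotone convergence will yield $\Erw|R|^{\theta}\le\Erw Y_{n}^{\theta}/(1-c)<\infty$.

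The hard part will be the concavity step: since $L_{n}$ may exceed one, the naive estimate $\Erw[L_{n}^{\theta}\phi(M/L_{n}^{\theta})]\le c\,\phi(M)$ fails, and the argument is rescued only because $\phi(0)=0$ forces the tangent ordinate $\phi(M)-M\phi'(M)$ to be nonnegative, so after integration the error $\phi'(M)M(1-L_{n}^{\theta})$ absorbs into $(1-c)M\phi'(M)$ and telescopes against $\phi(M)$, converting the differential estimate into a genuine moment bound.
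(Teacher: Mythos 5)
Your proof is correct, but it follows a genuinely different route from the paper's. The paper argues by stochastic domination via the backward iteration: by stationarity and \eqref{eq:AL backward}, $\Prob[|R|>t]=\Prob[|\Psi_{1}\cdots\Psi_{n}(R)|>t]\le\Prob[\wh{Y}_{n}+|\Lambda_{1}\cdots\Lambda_{n}(R)|>t]$; the linear part tends to $0$ in probability because $n^{-1}\log\|\Lip(\Lambda_{1}\cdots\Lambda_{n})\|_{\theta}\to\log\rho(\theta)^{1/(\theta\vee 1)}<0$, so $|R|$ is dominated in distribution by $\wh{Y}_{\infty}$, whose $\theta$-moment is then shown finite by the triangle inequality for $\|\cdot\|_{\theta}$ together with the geometric decay of $\Erw\,\Lip(\Lambda_{1}\cdots\Lambda_{n})^{\theta}$ (see \eqref{eq:theta-norm whYinfty}). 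You instead fix a single large $n$, work with the forward iteration and the truncated moments $\phi(M)=\Erw[|R|^{\theta}\wedge M]$, and convert the resulting recursive inequality into a uniform bound via the supergradient inequality for the concave function $\phi$ --- a Grincevi\v{c}ius--Goldie-type contraction argument. Both are sound under the stated hypotheses. The paper's version has the side benefit that the bound $\Erw\wh{Y}_{\infty}^{\theta}<\infty$ is reused elsewhere (in Prop.~\ref{prop:stationary distribution} and in Section~\ref{sec:positivity}), whereas yours is more self-contained in that it never invokes the infinite series $\wh{Y}_{\infty}$, at the price of the concavity trick. Two cosmetic points in your write-up: on the event $\{L_{n}=0\}$ the term $L_{n}^{\theta}\phi(M/L_{n}^{\theta})$ should be read as $0$ (consistent with $\Erw[0\wedge M]=0$), and where $\phi$ fails to be differentiable one should take $\phi'(M)$ to be the right derivative $\Prob[|R|^{\theta}>M]$, which is a valid supergradient of the concave $\phi$; neither point affects the validity of the argument.
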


\begin{proof}
With $\wh{Y}_{n}$ as defined in \eqref{eq:hatY_{n}} for $n\in\N_{0}$ and
$$ \wh{Y}_{\infty}\,:=\,B_{1}\ +\ \sum_{n\ge 1}\Lip(\Lambda_{1}\cdots\Lambda_{n})B_{n+1}, $$
Lemma \ref{lem:comparison with LIFS} provides us with
\begin{align*}
\Prob[|R|>t]\ &=\ \Prob[|\Psi_{1}\cdots\Psi_{n}(R)|>t]\\
&\le\ \Prob[\wh{Y}_{n}+|\Lambda_{1}\cdots\Lambda_{n}(R)|>t]\ \xrightarrow{n\to\infty}\ \Prob[\wh{Y}_{\infty}>t]
\end{align*}
for all $t\ge 0$, where $\Lambda_{1}\cdots\Lambda_{n}(R)\to 0$ in probability has also been used which in turn holds because, by Lemma \ref{lem:Ptheta vs Lambda},
\begin{gather}\label{WLLN Lambda}
n^{-1}\log\|\Lip(\Lambda_{1}\cdots\Lambda_{n})\|_{\theta}\ \xrightarrow{n\to\infty}\ \log\rho(\theta)^{1/(\theta\vee 1)}\,<\,0
\end{gather}
for $\theta\in (0,\kappa)$. Hence, it suffices to show $\Erw\wh{Y}_{\infty}^{\theta}<\infty$ for $\theta\in (0,\kappa)$. Putting $\|X\|_{\theta}:=\Erw|X|^{\theta}$ for $0<\theta\le 1$ and $:=(\Erw|X|^{\theta})^{1/\theta}$ for $\theta\ge 1$, we find
\begin{gather}\label{eq:theta-norm whYinfty}
\|\wh{Y}_{\infty}\|_{\theta}\ \le\ \|B\|_{\theta}\left(1+\sum_{n\ge 1}\big(e^{\log\|\Lip(\Lambda_{1}\cdots\Lambda_{n})\|_{\theta}/n}\big)^{n}\right)\ <\ \infty
\end{gather}
where \eqref{WLLN Lambda} has once again been utilized.
\qed \end{proof}

The following lemma about the direct Riemann integrability of certain functions appearing in the proofs of our main results is crucial and formulated in such a way that it can be used in any of these proofs, there for $\kappa=\kappaminus\wedge\kappaplus$ as one should expect. We also note that the moment condition on $R$ is guaranteed by Lemma \ref{lem:moments of R}.

\begin{Lemma}\label{lem:dRi case 1}
Let $R$ be as stated before Lemma \ref{lem:moments of R} and $\kappa>0$ such that
\begin{align}\label{eq:dRi moment assumptions}
\Erw|\Apm|^{\kappa}\,<\,\infty,\quad\Erw|B|^{\kappa}<\infty\quad\text{and}\quad\Erw|R|^{\theta}\,<\,\infty\text{ for }\theta\in(0,\kappa).
\end{align}
Further defining
\begin{gather*}
h_{\phi}\suppm(x)\,:=\,\Erw\big[\phi(\delta e^{-x}|\Psi(R)|)\1_{\{\pm\Psi(R)>0\}}-\phi(\delta e^{-x}|\Lambda(R)|)\1_{\{\pm\Lambda(R)>0\}}\big]
\end{gather*}
for $x\in\R$ and any $\phi\in\cC_\delta^{*}(\R)$, $\delta\in\bbS^{0}$, the function $\wh{h}_{\phi}^{(\epsilon)}(x):=e^{\kappa x}h_{\phi}^{(\epsilon)}(x)$ is dRi, i.e.~$h_{\phi}^{(\epsilon)}\in\cR_{\kappa}$ for each $\epsilon\in\bbS^{0}$.
Furthermore the function
$$ \overline{h}_{\varphi}(x)\,:=\,\Erw\left[|\varphi(e^{-x}\Psi(R))-\varphi(e^{-x}\Lambda(R))|\right] $$
is also in $\cR_{\kappa}$ for any $\varphi\in\cC^{*}(\R)$.
\end{Lemma}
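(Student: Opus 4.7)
The plan is to verify the two defining conditions of direct Riemann integrability for $\wh{h}_\phi^{(\epsilon)}(x) := e^{\kappa x}h_\phi^{(\epsilon)}(x)$ and $e^{\kappa x}\overline{h}_\varphi(x)$: continuity, together with finiteness of a suitable summable majorant. Continuity of $h_\phi^{(\epsilon)}$ and $\overline{h}_\varphi$ will follow by dominated convergence, since the integrands are continuous in $x$ for $\Prob$-a.e.\ $\omega$ and uniformly bounded by $2\|\phi\|_\infty$, resp.\ $2\|\varphi\|_\infty$. The same uniform bound yields $e^{\kappa x}|h_\phi^{(\epsilon)}(x)|\le 2\|\phi\|_\infty e^{\kappa x}$, which is geometrically summable on $(-\infty,0]$; so the whole issue is to control $\sum_{n\ge 0}\sup_{x\in[n,n+1]}e^{\kappa x}|h_\phi^{(\epsilon)}(x)|$.

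Setting $M := |\Psi(R)|\vee|\Lambda(R)|$ and letting $K_\phi>0$ be the largest constant such that $\phi$ vanishes on $[-K_\phi,K_\phi]$, I would first establish the pointwise estimate
\begin{equation*}
|h_\phi^{(\epsilon)}(x)| \le \Lip(\phi)\,e^{-x}\,\Erw\big[B\,\1_{\{M\ge K_\phi e^x\}}\big]\,+\,\|\phi\|_\infty\,\Prob(B\ge K_\phi e^x)
\end{equation*}
by splitting the modulus of the integrand according to whether $\sign(\Psi(R))=\sign(\Lambda(R))$. In the same-sign case one has $\big||\Psi(R)|-|\Lambda(R)|\big| = |\Psi(R)-\Lambda(R)|\le B$, and combining the Lipschitz inequality with the vanishing of $\phi$ yields the first term. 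In the mismatched-sign case one has $|\Psi(R)|+|\Lambda(R)| = |\Psi(R)-\Lambda(R)|\le B$, so $\phi$ can only be nonzero on the event $\{B\ge K_\phi e^x\}$, producing the second term. The analogous estimate for $\overline{h}_\varphi$ is strictly cleaner, namely $\overline{h}_\varphi(x) \le \Lip(\varphi)\,e^{-x}\,\Erw[B\,\1_{\{M\ge K_\varphi e^x\}}]$, since no indicator is involved in its definition.

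Exchanging summation and expectation in the resulting dRi sum on $[0,\infty)$ reduces the task to bounding two quantities: one is $\Erw[\sum_{n\ge 0}e^{\kappa n}\1_{\{B\ge K_\phi e^n\}}] \le C\,\Erw B^\kappa$, which is immediate; the other is $\Erw[B\sum_{n\ge 0}e^{(\kappa-1)n}\1_{\{M\ge K_\phi e^n\}}]$, which summed pointwise produces a factor $M^{\kappa-1}$ for $\kappa>1$, a constant for $\kappa<1$, and $\log(1+M)$ at the borderline $\kappa=1$. The non-routine step, and in my view the main obstacle, is verifying $\Erw[B\,M^{\kappa-1}]<\infty$ for $\kappa>1$, because the stationary law of $R$ need not have a $\kappa$-th moment. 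The plan is to bound $M \le (|\Aminus|+|\Aplus|)|R|+B$, use $(a+b)^{\kappa-1}\le C_\kappa(a^{\kappa-1}+b^{\kappa-1})$, exploit independence of $R$ from $(B,\Aminus,\Aplus)$ and the finiteness of $\Erw|R|^{\kappa-1}$ (since $\kappa-1<\kappa$), and finally apply H\"older's inequality with conjugate exponents $(\kappa,\kappa/(\kappa-1))$ to handle $\Erw[B(|\Aminus|+|\Aplus|)^{\kappa-1}]$ using $\Erw B^\kappa,\Erw|\Apm|^\kappa<\infty$. The case $\kappa<1$ follows immediately from $\Erw B<\infty$; the borderline $\kappa=1$ is slightly more delicate but yields to the same splitting. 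The identical argument gives $\overline{h}_\varphi\in\cR_\kappa$.
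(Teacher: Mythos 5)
Your overall strategy coincides with the paper's: reduce to the summability of $\sup_{x\in[n,n+1]}e^{\kappa x}|\cdot|$ over $n\ge 0$, bound the integrand by a Lipschitz term $\Lip(\phi)\,e^{-x}B$ on the event $\{M\ge K_{\phi}e^{x}\}$ plus a sign-mismatch term, and for $\kappa>1$ conclude via $M\le\Lip(\Lambda)|R|+B$, independence, $\Erw|R|^{\kappa-1}<\infty$ and H\"older. For $\kappa>1$ your argument is complete and essentially the paper's.

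There is, however, a genuine gap for $\kappa\le 1$. Your pointwise estimate never truncates the same-sign Lipschitz term at the trivial bound $2\|\phi\|_{\infty}$, and this truncation is exactly what makes the small-$\kappa$ cases work. For $\kappa<1$ you bound $\sum_{n\ge 0}e^{(\kappa-1)n}\1_{\{M\ge K_{\phi}e^{n}\}}$ by the full geometric series and are then left with $\Erw B$, which you assert is finite; but the hypothesis is only $\Erw B^{\kappa}<\infty$ with $\kappa<1$, which does not imply $\Erw B<\infty$. Similarly, at $\kappa=1$ your route produces $\Erw\big[B\log(1+M)\big]$, and after splitting $M\le\Lip(\Lambda)|R|+B$ this contains $\Erw\big[B\log(1+B)\big]$, which is not implied by $\Erw B<\infty$. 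The repair (and the paper's actual argument) is to use the bound $\big(\Lip(\phi)Be^{-x}\big)\wedge\big(2\|\phi\|_{\infty}\big)$ and split the sum over $n$ at $n=\log B$: on $\{0\le n\le\log B\}$ the uniform bound gives $2\|\phi\|_{\infty}\sum_{n\le\log B}e^{\kappa n}\le C\,B^{\kappa}$, while on $\{n>\log B\}$ the Lipschitz bound yields $B\sum_{\log B<n\le\log(M/K_{\phi})}e^{(\kappa-1)n}$, which is $O(B^{\kappa})$ for $\kappa<1$ and $O\big(B\log(M/K_{\phi}B)\big)$ for $\kappa=1$; the crucial gain in the latter case is the division by $B$ inside the logarithm, after which the elementary inequality $x\log(1+y/x)\le y$ reduces everything to $\Erw B$ and $\Erw\Lip(\Lambda)$, both finite when $\kappa=1$. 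With this modification your proof goes through in all three regimes.
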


\begin{proof} Define $\varphi(t)=\phi(\delta|t|)\1_{\{\epsilon t>0\}}$ for $\epsilon=\pm 1$ and
$$ h_{\varphi}(x)\,:=\,\Erw\left[\varphi(e^{-x}\Psi(R))-\varphi(e^{-x}\Lambda(R))\right]. $$
Then $h_{\varphi}=h_{\phi}^{\epsilon}$ because $\Psi(R)$ and $e^{-x}\Psi(R)$ (resp.~$\Lambda(R)$ and $e^{-x}\Lambda(R)$) have the same sign, and it suffices to show that, for any Lipschitz function $\varphi$ in $\cC^{*}(\R)$,
\begin{equation}\label{eq: sup}
\sum_{n\in\Z}\sup_{n<x\le n+1}e^{\kappa x}|\overline{h}_{\varphi}(x)|\ <\ \infty.
\end{equation}
Furthermore, the range of summation may be reduced to $n\in\N_{0}$ because
$$ \sum_{n\ge 1}\sup_{-n<x\le-n+1}e^{\kappa x}|\overline{h}_{\varphi}(x)|\ \le\ 2
\,\|\varphi\|_{\infty}\sum_{n\ge 0}e^{-\kappa n}\ <\ \infty. $$

\vspace{.1cm}
Put $M:=|\Psi(R)|\vee|\Lambda(R)|$ and observe that $M\le|\Lambda(R)|+B$ by \eqref{eq:def2 AL}, and
\begin{align}
\overline{h}_{\varphi}(x)\ &= \Erw\left[ |\varphi(e^{-x}\Psi(R))-\varphi(e^{-x}\Lambda(R))|\right]\nonumber\\
&\le\ \,\Erw\left[(\Lip(\varphi)\,e^{-x}|\Psi(R)-\Lambda(R)|\,\1_{[K_{\varphi},\infty)}(e^{-x}M))\wedge (2\|\varphi\|_{\infty})\right]\nonumber\\
&\le\ \Erw\left[(\Lip(\varphi)B\,e^{-x})\wedge (2\|\varphi\|_{\infty})\,\1_{[K_{\varphi},\infty)}(e^{-x}M)\right]\label{eq:dRi phi init}
\end{align}
by \eqref{eq:def2 AL}, \eqref{eq:def Kphi} and \eqref{eq:2nd estimate phi}. Since
\begin{align*}
\Erw\Bigg[\sum_{0\le n\le\log B}\sup_{n<x\le n+1}&e^{\kappa x}|\varphi(e^{-x}\Psi(R))-\varphi(e^{-x}\Lambda(R))|\Bigg]\\
&\le\ 2\|\varphi\|_{\infty}\,e^{\kappa}\,\Erw\Bigg[B^{\kappa}\sum_{0\le n\le\log B}e^{\kappa(n-\log B)}\Bigg]\\
&\le\ 2\|\varphi\|_{\infty}\,e^{\kappa}(1-e^{-\kappa})^{-1}\,\Erw B^{\kappa}\ <\ \infty,
\end{align*}
it remains to show that
\begin{align*}
J\,:=\,\Erw\Bigg[\sum_{n>\log B}\sup_{n<x\le n+1}&e^{\kappa x}\big|\varphi(e^{-x}|\Psi(R)|)-\varphi(e^{-x}|\Lambda(R)|)\big|\Bigg]\ <\ \infty
\end{align*}
for the desired conclusion \eqref{eq: sup}. To this end, we further estimate
\begin{align}
J\ &\le\ \Lip(\varphi)\,\Erw\Bigg[\sum_{n>\log B}e^{(\kappa-1)n}|\Psi(R)-\Lambda(R)|\,\1_{[K_{\varphi},\infty)}(e^{-n}M)\Bigg]\nonumber\\
&\le\ \Lip(\varphi)\,\Erw\Bigg[B\sum_{\log B<n\le\log(M/K_{\varphi})}e^{(\kappa-1)n}\Bigg]\label{eq:bound for J}\\
&\le\
\begin{cases}
\hfill\displaystyle\frac{\Lip(\varphi)}{1-e^{\kappa-1}}\,\Erw B^{\kappa},&\text{if }\kappa<1\\[1mm]
\hfill \Lip(\varphi)\,\Erw B\log(M/K_{\varphi}B),&\text{if }\kappa=1\\[1mm]
\displaystyle\frac{\Lip(\varphi)\,K_{\varphi}^{1-\kappa}}{e^{\kappa-1}-1}\,\Erw B(M\vee 1)^{\kappa-1},&\text{if }\kappa>1
\end{cases}.\nonumber
\end{align}

If $\kappa<1$, then $\Erw B^{\kappa}<\infty$ is guaranteed by \eqref{eq:dRi moment assumptions}. For the case $\kappa=1$, we point out that
\begin{align*}
\Erw B\log(M/B)\ &\le\ \Erw B\log(1+\Lambda(R)/B)\ \le\ \Erw B\log(1+\Lip(\Lambda)|R|/B)\\
&\le\ \Erw B\log(1+\Lip(\Lambda)/B)\,+\,\Erw\log(1+|R|)
\end{align*}
where $\Erw\log(1+|R|)<\infty$ by another appeal to \eqref{eq:dRi moment assumptions}. Left with the first expectation in the previous display, the inequality
$$ x\log\left(1+\frac{y}{x}\right)\ =\ y\left[\frac{x}{y}\,\log\left(1+\frac{y}{x}\right)\right]\ \le\ y $$
for $0<x<y$ combined with $\Lip(\Lambda)=|\Aminus|\vee|\Aplus|$ and \eqref{eq:dRi moment assumptions} provides us with
\begin{align*}
\Erw B\log(1+\Lip(\Lambda)/B)\ &\le\  (\log 2)\,\Erw B\,+\,\Erw B\log(1+\Lip(\Lambda)/B)\1_{\{B<\Lip(\Lambda)\}}\\
&\le\ 2\,\Erw B\,+\,\Erw\Lip(\Lambda)\ <\ \infty.
\end{align*}

Finally, if $\kappa>1$, note first that $\Erw B(M\vee 1)^{\kappa-1}$ is bounded by a constant times $\Erw B\Lip(\Lambda)^{\kappa-1}\,\Erw|R|^{\kappa-1}+\Erw B^{\kappa}$. Use H\"older's inequality to infer
\begin{align*}
\Erw B\Lip(\Lambda)^{\kappa-1}\ \le\ (\Erw B^{\kappa})^{1/\kappa}\,(\Erw\Lip(\Lambda)^{\kappa})^{(\kappa-1)/\kappa}.
\end{align*}
Now $\Erw B(M\vee 1)^{\kappa-1}<\infty$ follows again by \eqref{eq:dRi moment assumptions}.
\qed \end{proof}

\textsc{Proof} \textit{(of Thm.~\ref{thm:main case 1})}
As \eqref{eq:tails main case 1} is an almost immediate consequence of \eqref{eq:KRT main case 1}, it suffices to prove the latter and identity \eqref{eq:relation Cplusminus}.

\vspace{.1cm}
Let $R$ be as in the previous lemma and independent of the i.i.d.~random variables $(\Psi,\Lambda),(\Psi_{1},\Lambda_{1}),(\Psi_{2},\Lambda_{2}),\ldots$ Observe that $(\xi_{n},S_{n})_{n\ge 0}$ and $\Psi(R),\Lambda(R)$ are  independent, thus
\begin{align*}
&\Prob\left[ (\sign(\Lambda_{n}\cdots\Lambda_{1}(x)),\log|\Lambda_{n}\cdots\Lambda_{1}(x)|)_{n\ge 0}\in\cdot\ \right]\\
&=\ \Prob_{\sign(x)}[(\xi_{n},S_{n}+\log|x|)_{n\ge 0}\in\cdot\ ]\\
&=\Prob\left[ (\xi_{n},S_{n}+\log|x|)_{n\ge 0}\in\cdot\ |\xi_{0}=\sign(x),\Psi(R),\Lambda(R)\right]\
\end{align*}
for each $x\in\R$. Defining
$$ \phi\star\nu(t)\,:=\,\int\phi(e^{-t}x)\,\nu(dx)\quad \text{and}\quad\phi_{\delta}(t)\,:\,=\phi(t)\1_{\IRg}(\delta t)\,\in\,\cC_{\delta}^{*}(\R) $$
for $\delta\in\bbS^{0}$, we have
\begin{gather}
\phi\star\nu(t)\ =\ \phi_{-1}\star\nu(t)+\phi_{1}\star\nu(t)\label{eq:potential formula nu(phi)}
\intertext{and will prove that $\rho(\theta)<1$ for $\theta\in (0,\kappa)$ and $\Erw B^\kappa <\infty$ are enough to infer}
\phi_{\delta}\star\nu(t)\,=\,\sum_{\epsilon\in\bbS^{0}}\sum_{n\ge 0}\Erw_{\epsilon} h_{\phi_{\delta}}^{(\epsilon)}(t-S_{n})\1_{\{\xi_{n}=\delta\}}\label{eq2:potential formula nu(phi)}
\end{gather}
for all $t\in\R$ and $\delta\in\bbS^{0}$. In particular, irreducibility $(\ppm\wedge \pmp>0)$ is not required, a fact we will take advantage of later when dealing with the other cases.
Note that, by Lemmata \ref{lem:Ptheta vs Lambda} and \ref{lem:moments of R},
$$ \left(\Erw|\Lambda_{n}\cdots\Lambda_{1}(R)|^{\theta}\right)^{1/n}\ \le\ \left(\Erw \Lip(\Lambda_{n}\cdots\Lambda_{1})^{\theta}\Erw|R|^{\theta}\right)^{1/n}\ \xrightarrow{n\to\infty}\ \rho(\theta)\ <\ 1$$
This entails that, for any $\phi\in\cC^*(\R)$ and with $C$ such that $\phi(x)\le C|x|^{\theta}$,
$$ \Erw\phi(e^{-t}\Lambda_{n}\cdots\Lambda_{1}(R))\ \le\ Ce^{-\theta t}  \Erw|\Lambda_{n}\cdots\Lambda_{1}(R)|^{\theta}\ \xrightarrow{n\to\infty}\ 0, $$
and then
\begin{align}
\phi_{\delta}&\star\nu(t)\ =\ \sum_{j=0}^{n-1}\left[\Erw\phi_{\delta}(e^{-t}\Lambda_{j}\cdots\Lambda_{1}(R))\,-\,\Erw\phi_{\delta}(e^{-t}\Lambda_{j+1}\cdots\Lambda_{1}(R))\right]\nonumber\\
&\hspace{1.7cm}+\ \Erw\phi_{\delta}(e^{-t}\Lambda_{n}\cdots\Lambda_{1}(R))\nonumber\\
&=\ \sum_{j=0}^{n-1}\Erw\!\left[\phi_{\delta}(e^{-t}\Lambda_{j}\cdots\Lambda_{1}(\Psi(R)))-
\phi_{\delta}(e^{-t}\Lambda_{j}\cdots\Lambda_{1}(\Lambda(R)))\right]\,+\,o(1)
\label{eq:tail equal potential}
\end{align}
as $n\to\infty$. Moreover,
\begin{align*}
&\Erw\!\left[\phi_{\delta}(e^{-t}\Lambda_{j}\cdots\Lambda_{1}(x))\right]\ =\ \Erw_{\sign(x)}\Big[\phi_{\delta}(\xi_{j}e^{-(t-S_{j})}|x|)\Big]\\
&=\ \Erw_{\sign(x)}\Big[\phi_{\delta}(\delta e^{-(t-S_{j})}|x|)\1_{\{\xi_{j}=\delta\}}\Big]\\
&=\ \Eplus\Big[\phi_{\delta}(\delta e^{-(t-S_{j})}|x|)\1_{\{x>0\}}\1_{\{\xi_{j}=\delta\}}\Big]+\Eminus\Big[\phi_{\delta}(\delta e^{-(t-S_{j})}|x|)\1_{\{x<0\}}\1_{\{\xi_{j}=\delta\}}\Big]
\end{align*}
for all $x\in \R$, hence
\begin{align*}
&\Erw\Big[\phi_{\delta}(e^{-t}\Lambda_{j}\cdots\Lambda_{1}(\Psi(R)))-
\phi_{\delta}(e^{-t}\Lambda_{j}\cdots\Lambda_{1}(\Lambda(R)))\Big]\\
&=\ \Eplus\Big[\Big(\phi_{\delta}(\delta e^{-(t-S_{j})}|\Psi(R)|)\1_{\{\Psi(R)>0\}}- \phi_{\delta}(\delta e^{-(t-S_{j})}|\Lambda(R)|)\1_{\{\Lambda(R)>0\}}\Big)\1_{\{\xi_{j}=\delta\}}\Big]\\
&+\ \Eminus\Big[\Big(\phi_{\delta}(\delta e^{-(t-S_{j})}|\Psi(R)|)\1_{\{\Psi(R)<0\}}-\phi_{\delta}(\delta e^{-(t-S_{j})}|\Lambda(R)|)\1_{\{\Lambda(R)<0\}}\Big)\1_{\{\xi_{j}=\delta\}}\Big]\\
&=\ \Eplus h_{\phi_{\delta}}^{(+)}(t-S_{j})\1_{\{\xi_{j}=\delta\}}\ +\ \Eminus h_{\phi_{\delta}}^{(-)}(t-S_{j})\1_{\{\xi_{j}=\delta\}}
\end{align*}
for each $\delta\in\bbS^{0}$ and $j\in\N$. By combining this with \eqref{eq:tail equal potential}, we obtain \eqref{eq2:potential formula nu(phi)}.

\vspace{.1cm}
Since, by Lemma \ref{lem:dRi case 1}, the $\wh{h}_{\phi_{\delta}}^{(\epsilon)}(t):=e^{\kappa t}h_{\phi_{\delta}}^{(\epsilon)}(t)$ are dRi for $\delta,\epsilon\in\bbS^{0}$, we infer with the help of Prop.~\ref{prop:RThm case 1} that
$$ e^{\kappa t}\,\phi\star\nu(t)\ \xrightarrow{t\to\infty}\ \Theta(\phi_{1})+\Theta(\phi_{-1}), $$ where
\begin{gather*}
\Theta(\phi_{\delta})\,:=\,\frac{u_{\delta}(\kappa)\sum_{\epsilon\in\bbS^{0}}v_{\epsilon}(\kappa)\int_{-\infty}^{\infty}\wh{h}_{\phi_{\delta}}^{(\epsilon)}(x)\ dx}{\whpim(\kappa)\,\Erw|\Aminus|^{\kappa}\log|\Aminus|\,+\,\whpip(\kappa)\,\Erw|\Aplus|^{\kappa}\log|\Aplus|}
\end{gather*}
where, using two substitutions and Fubini's Theorem (absolute integrability is guaranteed by Lemma \ref{lem:dRi case 1}),
\begin{align*}
&\int_{-\infty}^{\infty}\wh{h}_{\phi_{\delta}}^{(\epsilon)}(x)\ dx\\
&=\ \int_{-\infty}^{\infty} e^{\kappa x}\,\Erw\big[\phi_{\delta}(\delta e^{-x}|\Psi(R)|)\1_{\{\epsilon\Psi(R)>0\}}-\phi_{\delta}(\delta e^{-x}|\Lambda(R)|\1_{\{\epsilon\Lambda(R)>0\}})\big]\,dx\\
&=\ \int_{0}^{\infty} \frac{1}{x^{\kappa+1}}\Erw\big[\phi_{\delta}(\delta |\Psi(R)|x)\1_{\{\epsilon\Psi(R)>0\}}-\phi_{\delta}(\delta |\Lambda(R)|x)\1_{\epsilon\Lambda(R)>0\}}\big]\,dx\\
&=\ \Erw\Big[|\Psi(R)|^{\kappa}\1_{\{\epsilon\Psi(R)>0\}}-|\Lambda(R)|^{\kappa}\1_{\{\epsilon\Lambda(R)>0\}}\Big]\int_{0}^{\infty}\frac{\phi_\delta(\delta x)}{x^{\kappa+1}}\,dx.
\end{align*}
This yields $\Theta(\phi_{1})=\Cplus\int_{0}^{\infty}\frac{\phi(x)}{x^{\kappa+1}}\,dx$ with
\begin{equation}\label{eq:def Cplus case 1}
\Cplus\ =\ \frac{\uplus(\kappa)\sum_{\epsilon}v_{\epsilon}(\kappa)\,\Erw\big[|\Psi(R)|^{\kappa}\1_{\{\epsilon\Psi(R)>0\}}-|\Lambda(R)|^{\kappa}\1_{\{\epsilon\Lambda(R)>0\}}\big]}{\whpim(\kappa)\,\Erw|\Aminus|^{\kappa}\log|\Aminus|\,+\,\whpip(\kappa)\,\Erw|\Aplus|^{\kappa}\log|\Aplus|}
\end{equation}
and accordingly  $\Theta(\phi_{-1})=\Cminus\int_{0}^{\infty}\frac{\phi(-x)}{x^{\kappa+1}}\,dx$ with
\begin{equation}\label{eq:def Cminus case 1}
\Cminus\ =\ \frac{\uminus(\kappa)\sum_{\epsilon}v_{\epsilon}(\kappa)\,\Erw\big[|\Psi(R)|^{\kappa}\1_{\{\epsilon\Psi(R)>0\}}-|\Lambda(R)|^{\kappa}\1_{\{\epsilon\Lambda(R)>0\}}\big]}{\whpim(\kappa)\,\Erw|\Aminus|^{\kappa}\log|\Aminus|\,+\,\whpip(\kappa)\,\Erw|\Aplus|^{\kappa}\log|\Aplus|}.
\end{equation}
This completes the proof of \eqref{eq:KRT main case 1}, and Relation \eqref{eq:relation Cplusminus} is now a direct consequence of the two formulae for $\Cminus$ and $\Cplus$.
\hfill $\square$

\section{Proof of Thm.~\ref{thm:main case 2}, Parts (b) and (c)}\label{sec:proof main case 2}

Let $\phi\in\cC^{*}(\R)$. Being in the unilateral case, $\ppm=0<\pmp$ entails that $\IPplus[\xi_{n}=-1]=0$ for all $n\ge 0$ and therefore that decomposition
\eqref{eq:potential formula nu(phi)} of $\phi*\nu$ simplifies to
\begin{align}\label{eq:unilateral decomp of phi*nu}
\phi\star\nu(t)\ =\ h_{\phi_{-1}}\supminus*\IUminus(t)\ +\ \Eminus\Bigg[\sum_{n\ge\tau}h_{\phi_{1}}\supminus(t-S_{n})\Bigg]\ +\ h_{\phi_{1}}\supplus*\IUplus(t)
\end{align}
with $\tau=\tau(1)$ and $\IUminus,\,\IUplus$ as defined before Prop.~\ref{prop:RThm case 2A}. In particular,
\begin{align}\label{eq:unilateral decomp of phi*nu simplified}
\phi\star\nu(t)\ =\ h_{\phi}\supminus*\IUminus(t)
\end{align}
if $\phi\in\cC_{-}^{*}(\R)$ and thus $\phi=\phi_{-1}$. In order to prove Thm.~\ref{thm:main case 2}, we will use the above decomposition \eqref{eq:unilateral decomp of phi*nu} and determine asymptotics for its terms on the right-hand side with the help of Props. \ref{prop:RThm case 2A} and \ref{prop:RThm case 2B} in combination with Lemmata \ref{lem:moments of R} and \ref{lem:dRi case 1} which ensure the direct Riemann integrability of the functions $\wh{h}_{\phi_{\delta}}^{(\epsilon)}(t)=e^{\kappa t}h_{\phi_{\delta}}^{(\epsilon)}(t)$ for $\delta,\epsilon\in\bbS^{0}$. As usual, let $\kappa=\kappaminus\wedge\kappaplus$. In particular $\rho(\kappa)=1$ and $\rho(\theta)<1$ for any $\theta\in [0,\kappa)$.
\vspace{.2cm}

(b) Here $\kappa=\kappaplus$. Given any $\phi\in\cC^{*}(\R)$, we use \eqref{eq:unilateral decomp of phi*nu}. By Prop.~\ref{prop:RThm case 2A}, the first term on the right-hand side is of the order $o(e^{-\kappaplus t})$ as $t\to\infty$, the second term convergent to $\Cplus^{(1)}\int_{0}^{\infty}x^{-(\kappaplus+1)}\,\phi(x)\,dx$ with
\begin{align}\label{Cplus1 Thm 2.2(b)}
\Cplus^{(1)}\,:=\,\frac{\pmp(\kappaplus)\,\Erw\big[|\Psi(R)|^{\kappaplus}\1_{\{\Psi(R)<0\}}-|\Lambda(R)|^{\kappaplus}\1_{\{\Lambda(R)<0\}}\big]}{(1-\pmm(\kappaplus))\ppp'(\kappaplus)},
\end{align}
and the last one convergent to $\Cplus^{(2)}\int_{0}^{\infty}x^{-(\kappaplus+1)}\,\phi(x)\,dx$ with
\begin{align}\label{Cplus2 Thm 2.2(b)}
\Cplus^{(2)}\,:=\,\frac{\Erw\big[|\Psi(R)|^{\kappaplus}\1_{\{\Psi(R)>0\}}-|\Lambda(R)|^{\kappaplus}\1_{\{\Lambda(R)>0\}}\big]}{\ppp'(\kappaplus)}.
\end{align}
Consequently, \eqref{eq:KRT2 main case 2} holds with
\begin{gather}
\Cplus\ =\ \Cplus^{(1)}\,+\,\Cplus^{(2)}.\label{Cplus Thm 2.2(b)}
\end{gather}

\vspace{.2cm}
(c) Again, pick any $\phi\in\cC^{*}(\R)$ and note that $\kappa=\kappaminus$. In this case, $e^{\kappa t}$ times the first term on the left-hand side in \eqref{eq:unilateral decomp of phi*nu} converges to $\Cminus\int_{0}^{\infty}\phi(-x)\,dx$ with $\Cminus$ given by
\begin{equation}\label{Cminus Thm 2.2(a)}
\Cminus\ =\ \frac{\Erw\big[|\Psi(R)|^{\kappaminus}\1_{\{\Psi(R)<0\}}-|\Lambda(R)|^{\kappaminus}\1_{\{\Lambda(R)<0\}}\big]}{\pmm'(\kappaminus)}.
\end{equation}
By \eqref{eq:KRT case 2B+} of Prop.~\ref{prop:RThm case 2B}, $e^{\kappa t}$ times the last term in \eqref{eq:unilateral decomp of phi*nu} converges to $0$. Left with an inspection of the middle term multiplied by $e^{\kappa t}$, use \eqref{eq:KRT case 2B-+} of Prop.~\ref{prop:RThm case 2B} to infer
\begin{align*}
e^{\kappa t}\,\Eminus\Bigg[\sum_{n\ge\tau}h_{\phi_{1}}\supminus*\IUplus(t-S_{n})\Bigg]\ \xrightarrow{t\to\infty}\ \frac{1}{\pmmprime(\kappa)}\int_{\R}(h_{\phi_{1}}\supminus*\IUminplus)_{\kappa}(x)\ dx.
\end{align*}
By proceeding in a similar manner as for the derivation of \eqref{eq:def Cplus case 1} and \eqref{eq:def Cminus case 1} (using partial integration and substitution), we find that
\begin{gather*}
\int_{\R}(h_{\phi_{1}}\supminus*\IUminplus)_{\kappa}(x)\ dx\ =\ \int_{\R}e^{\kappa x}\,h_{\phi_{1}}\supminus*\IUminplus(x)\ dx
\end{gather*}
and this is readily evaluated as $\int_{0}^{\infty}\frac{\phi(x)}{x^{\kappa+1}}\,dx$ times
$$ \Erw\big[|\Psi(R)|^{\kappa}\1_{\{\Psi(R)<0\}}-|\Lambda(R)|^{\kappa}\1_{\{\Lambda(R)<0\}} \cdot\int_{\R}e^{\kappa x}\,\IUminplus(x)\,dx. $$
Recalling \eqref{eq:mgf Uminplus at kappa} for the last term in the previous line, this shows that
\begin{gather}
e^{\kappa t}\,\Eminus\Bigg[\sum_{n\ge\tau}h_{\phi_{1}}\supminus*\IUplus(t-S_{n})\Bigg]\ \xrightarrow{t\to\infty}\ \Cminplus\int_{0}^{\infty}\frac{\phi(x)}{x^{\kappa+1}}\ dx\nonumber
\shortintertext{with}
\Cminplus\ =\ \frac{\pmp(\kappa)\,\Erw\big[|\Psi(R)|^{\kappa}\1_{\{\Psi(R)<0\}}-|\Lambda(R)|^{\kappa}\1_{\{\Lambda(R)<0\}}\big]}{\pmmprime(\kappa)(1-\ppp(\kappa))}.\label{Cminplus Thm 2.2(c)}
\end{gather}
A combination of the previous results yields \eqref{eq:KRT3 main case 2}.
\hfill $\square$

\section{Proofs of Thm.~\ref{thm:main case 2}(a) and Thm.~\ref{thm:main case 3}}\label{sec:proof main case 3}

Part (a) of Thm.~\ref{thm:main case 2} and Thm.~\ref{thm:main case 3} can both be deduced from Goldie's implicit renewal theory (see \cite[Thm. 2.3 and Cor 2.4]{Goldie:91} and also \cite{Mirek:11} and \cite{Alsmeyer:16}). We confine ourselves to details regarding Thm.~\ref{thm:main case 2}(a) because those for Thm.~\ref{thm:main case 3} are similar.

\begin{proof}[\it Proof of Thm.~\ref{thm:main case 2}(a)]
The claimed left tail behavior of $\nu$ at $-\infty$ follows directly with the help of Cor.~2.4 in \cite{Goldie:91} after checking the following conditions: With $\wtil{A}:=\Aminus\vee 0$,
\begin{gather*}
\Erw{\wtil{A}}^{\kappaminus}=1,\quad\Erw{\wtil{A}}^{\kappaminus}\log\wtil{A}<\infty,\quad
\text{the law of $\wtil{A}$ is nonarithmetic},
\shortintertext{and}
\Erw\left|\big|\Psi(R)\wedge 0\big|^{\kappaminus}-\big|\wtil{A}R\wedge 0\big|^{\kappaminus}\right|\,<\,\infty.
\end{gather*}
But the first three of them are immediate by the assumptions of Thm.~\ref{thm:main case 2}, and the last condition follows from the observation that
\begin{gather*}
\sup_{x\in\R}|\Psi(x)\wedge 0-\wtil{A}x\wedge 0|\ \le\ B\quad\text{a.s.}
\end{gather*}
and $\Erw B^{\kappaminus}<\infty$ (see \eqref{eq:moments AlogAminus B}).
\qed \end{proof}

As already said, the proof of Thm.~\ref{thm:main case 3} follows along the same lines, for part (b) using a conjugation with a homeomorphism $r:\R\to [1,+\infty)$.

\section{Positivity of the constants} \label{sec:positivity}

The purpose of this section is to provide conditions that entail positivity of the constants $\Cplus,\Cminus$ figuring in Thms.~\ref{thm:main case 1}, \ref{thm:main case 2} and \ref{thm:main case 3}. This does usually not follow from the existence of the limit and therefore requires additional arguments. Our approach here is based on a recent paper \cite{buraczewski:damek} and consists in proving that, if the support of the stationary measure $\nu$ is unbounded and some contraction property holds, then the constants are indeed positive. The results are stated in Props.~\ref{prop:tail constants positive}, \ref{prop:tail constants positive2} and \ref{prop:tail constants positive3} below, but we 
confine ourselves to the proof in the irreducible case because the remaining ones can be either  treated in an analogous way or reduced to Goldie's implicit renewal theory \cite{Goldie:91}.

\medskip

\textit{\bfseries Case 1 (irreducible case)}. $p_{-+}>0$ and $p_{+-}>0$.

\begin{Prop}\label{prop:tail constants positive}
In the situation of Thm.~\ref{thm:main case 1}, the constants $\Cplus$ and $\Cminus$ in \eqref{eq:tails main case 1} are strictly positive if the stationary measure $\nu$ has unbounded support.
\end{Prop}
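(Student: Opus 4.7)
In Case~1, $P$ is irreducible, so both components $u_{-}(\kappa),u_{+}(\kappa)$ of the left Perron eigenvector are strictly positive; the relation \eqref{eq:relation Cplusminus} then implies $C_{+}>0\Leftrightarrow C_{-}>0$, and it suffices to exclude the joint degeneracy $C_{+}=C_{-}=0$, which I will do by contradiction.

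Assume $C_{+}=C_{-}=0$. From the explicit formulae \eqref{eq:def Cplus case 1}--\eqref{eq:def Cminus case 1}, this amounts to the vanishing of the common finite numerator
\[
\Gamma \;:=\; \sum_{\epsilon\in\bbS^{0}}v_{\epsilon}(\kappa)\,\Erw\bigl[|\Psi(R)|^{\kappa}\1_{\{\epsilon\Psi(R)>0\}}-|\Lambda(R)|^{\kappa}\1_{\{\epsilon\Lambda(R)>0\}}\bigr],
\]
which is a well-defined real number thanks to Lemma~\ref{lem:dRi case 1}. A direct conditional computation, using the right-eigenvector identity $P(\kappa)v(\kappa)=v(\kappa)$ arising from $\rho(\kappa)=1$, gives $\Erw[v_{\xi(\Lambda(R))}(\kappa)|\Lambda(R)|^{\kappa}\mid R]=v_{\xi(R)}(\kappa)|R|^{\kappa}$; combined with stationarity $R\eqdist\Psi(R)$, this forces the two bracketed expectations inside $\Gamma$ to coincide with $\Erw[v_{\xi(R)}(\kappa)|R|^{\kappa}]$ whenever this quantity is finite, so that $\Gamma=0$ is automatic in the regime $\Erw|R|^{\kappa}<\infty$. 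Hence the task reduces to excluding a delicate ``$\infty-\infty$'' cancellation in $\Gamma$ in the regime $\Erw|R|^{\kappa}=\infty$.

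The plan, following the strategy of \cite{buraczewski:damek}, is to use unbounded support of $\nu$ to rule out this cancellation. By Theorem~\ref{thm:main case 1}, $C_{\pm}=0$ is equivalent to $t^{\kappa}\int\phi(t^{-1}x)\,\nu(dx)\to 0$ for every $\phi\in\cC^{*}(\R)$, whereas the Markov renewal representation \eqref{eq2:potential formula nu(phi)} together with the positive drift $\rho'(\kappa)>0$ of $(\xi_{n},S_{n})$ under $\Prob^{(\kappa)}$ (Lemma~\ref{lem:stationary drift}) and Proposition~\ref{prop:RThm case 1} provides a strictly positive lower bound for the same quantity, provided one can exhibit a nonnegative bump $\phi\in\cC^{*}(\R)$ and an $\epsilon\in\bbS^{0}$ such that $\int e^{\kappa x}h_{\phi_{\delta}}^{(\epsilon)}(x)\,dx>0$. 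The ingredients making such a construction possible are: backward convergence $R=\lim_{n}\Psi_{1}\cdots\Psi_{n}(R_{0})$ almost surely, which holds by the contraction $\rho(\theta)<1$ for $\theta\in(0,\kappa)$ (Lemma~\ref{lem:rho(s)<1}) together with the comparison Lemma~\ref{lem:comparison with LIFS}; and the fact that unbounded support places $\nu$-mass on arbitrarily large scales.

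The main obstacle is the quantitative step: bounding below the residual in $\Gamma$ after the stationarity--eigenvector cancellation. The plan is to approximate $\Gamma$ by truncations $\Gamma_{K}$ obtained by restricting to $\{|R|\le K\}$, on which the identities above force exact cancellation of the two bracketed terms, and to show that the complementary tail contribution is bounded below by a positive constant multiple of $\nu(\{|R|>K\})$, uniformly in $K$. Unboundedness of the support guarantees $\nu(\{|R|>K\})>0$ for every $K$ and prevents the lower bound from vanishing in the limit $K\to\infty$, yielding $\Gamma>0$ and contradicting $C_{\pm}=0$.
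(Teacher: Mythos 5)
Your reduction is sound as far as it goes: the relation \eqref{eq:relation Cplusminus} with $\uminus(\kappa),\uplus(\kappa)>0$ (irreducibility) does reduce the problem to excluding $\Cplus=\Cminus=0$, and your eigenvector computation $\Erw\big[v_{\sign(\Lambda(x))}(\kappa)|\Lambda(x)|^{\kappa}\big]=v_{\sign(x)}(\kappa)|x|^{\kappa}$ correctly shows that the numerator $\Gamma$ vanishes identically whenever $\Erw|R|^{\kappa}<\infty$, so that positivity can only come from an $\infty-\infty$ cancellation failing. But the step on which everything hinges — the truncation argument — does not work as described, and this is a genuine gap rather than a missing detail. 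First, restricting $\Gamma$ to $\{|R|\le K\}$ does \emph{not} produce exact cancellation: the identity you invoke uses stationarity $R\eqdist\Psi(R)$, and the events $\{|R|\le K\}$ and $\{|\Psi(R)|\le K\}$ do not match, so the telescoping breaks on every truncation. Second, the complementary contribution $\Erw\big[(|\Psi(R)|^{\kappa}-|\Lambda(R)|^{\kappa})\1_{\{|R|>K\}}\big]$ has no definite sign (only $|\Psi(R)|-|\Lambda(R)|$ is controlled, by $B$, in absolute value), so there is no reason it should be bounded \emph{below} by $c\,\nu(\{|R|>K\})$; asserting such a lower bound is essentially asserting the conclusion. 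Your earlier remark that it would suffice to exhibit $\phi$ with $\int e^{\kappa x}h_{\phi_{\delta}}^{(\epsilon)}(x)\,dx>0$ is likewise circular, since by the computation in Section \ref{sec:proof main case 1} that integral is a positive multiple of exactly the quantity whose positivity is in question.

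The paper's proof takes an entirely different route that avoids $\Gamma$ altogether. One observes that $M_{n}=\Prob[\Psi_{1}\cdots\Psi_{n}(R)>t\,|\,\cG_{n}]$ is a bounded martingale with mean $\nu((t,\infty))$, and applies optional stopping at the first passage time $T_{t}=\inf\{n:\wh{Z}_{n}>t\}$ of the \emph{backward} linear iteration. Together with Lemma \ref{lem:comparison with LIFS} this yields
\begin{equation*}
\nu((t,\infty))\ \ge\ \Big(\IPplus[T_{t}<\infty]-\Prob[\wh{Y}_{\infty}>(K-1)t]\Big)\,\nu([K,\infty)),
\end{equation*}
and the proof then consists of two genuinely quantitative inputs that your plan does not supply: a lower bound $\IPplus[T_{t}<\infty]\ge K_{1}t^{-\kappa}$, obtained from the Markov renewal first-passage theorem for $(\xi_{n},S_{n})$ under $\Prob^{(\kappa)}$ together with a delicate argument reversing the order of the random iterations and tracking signs (Lemma \ref{lem:aux1 pos const}); and an upper bound $\Prob[\wh{Y}_{\infty}>t]\le K_{2}t^{-\kappa}$ for the perturbation series (Lemma \ref{lem:aux2 pos const}). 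Choosing $K$ large and using that unbounded support gives $\nu([K,\infty))>0$ closes the argument. If you want to salvage your approach, you would need to replace the truncation step by estimates of comparable strength; as written, the core of the proof is missing.
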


The matrix $P(0)=P$ has dominant eigenvalue $\rho(0)=1$ because it is a transition matrix. It follows that the function $s\mapsto \rho(s)$ is smooth, convex (Lemma \ref{lem:properties rho(theta) case 1}) and satisfies $\rho(0)=\rho(\kappa)=1$ under our hypotheses which in combination with Lemma \ref{lem:rho(s)<1} in the Appendix further entails $\rho(s)<1$ for each $s\in (0,\kappa)$, a fact that will be used for the proof of the proposition (see after \eqref{eq:star}).

\vspace{.1cm}
Recall from Section \ref{sec:associated LIFS} that $\Lip(\Lambda_{n})=|\Aminus_{n}|\vee|\Aplus_{n}|$, $\wh{Z}_{n}=\Lambda_{1}\cdots\Lambda_{n}(X_{0})$., and let
\begin{equation*}
\wh{Y}_{\infty}= \sum_{n\ge 0}\Lri_{n}B_{n+1}
\end{equation*}
with $\Lri_{n}$ as defined after \eqref{eq:hatY_{n}}. Finally, let $X_{0}=\xi_{0}$ throughout this section and
$$ T_{t}\,:=\,\inf\{n\ge 1:\wh{Z}_{n}>t\} $$
for $t>0$. The proof of Prop.~\ref{prop:tail constants positive} is based on the subsequent lemma which does not require irreducibility.

\begin{Lemma}\label{lem:lower bound t^kappa nu}
Let $\nu$ be a stationary distribution with support unbounded to the right. If
\begin{gather}
\IPplus\big[T_{t}<\infty\big]\ \ge\ K_{1}t^{-\kappa}\label{eq1:lower bound t^kappa nu}
\shortintertext{and}
\Prob\big[\wh{Y}_{\infty}>t\big]\ \le\ K_{2}\,t^{-\kappa},\label{eq2:lower bound t^kappa nu}
\end{gather}
for suitable constants $K_{1},K_{2}>0$ and all $t\ge 1$, then
$$ \liminf_{t\to\infty}t^{\kappa}\,\nu((t,\infty))\,>\,0. $$
\end{Lemma}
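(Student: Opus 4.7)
The plan is to combine stationarity of $\nu$ under backward iteration with the strong Markov property at the stopping time $T:=T_{2t/M}$, where $M>0$ is a parameter chosen at the end. Since $\nu$ has unbounded support to the right, $p:=\nu((M,\infty))>0$ for every $M$. Let $R^{\ast}$ have law $\nu$ and be independent of the i.i.d.\ sequence $(\Psi_{n},\Lambda_{n},B_{n})_{n\ge 1}$, and fix a deterministic integer $N$ (to be sent to infinity). Iterating the stationarity $\Psi(R)\eqdist R$ for $R\sim\nu$ gives $\Psi_{1}\cdots\Psi_{N}(R^{\ast})\sim\nu$, hence $\Prob[\Psi_{1}\cdots\Psi_{N}(R^{\ast})>t]=\nu((t,\infty))$ for each $N$.

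On the event $\{T\le N\}$ I would decompose $\Psi_{1}\cdots\Psi_{N}(R^{\ast})=\Psi_{1}\cdots\Psi_{T}(U)$ with $U:=\Psi_{T+1}\cdots\Psi_{N}(R^{\ast})$. Splitting according to $\{T=k\}$ for each $k\le N$ and using both that $(\Psi_{k+1},\ldots,\Psi_{N},R^{\ast})$ is independent of $\calF_{k}=\sigma(\Psi_{1},\ldots,\Psi_{k})$ and that $\Psi_{k+1}\cdots\Psi_{N}(R^{\ast})\sim\nu$ (by the same iteration of stationarity), one obtains the key mixing/independence identity
\[
\Prob[U>M,\,A]\,=\,p\cdot\Prob[A]\quad\text{for every }A\in\calF_{T}\text{ with }A\subseteq\{T\le N\}.
\]

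For the comparison step, the observation is that $\Lambda_{1}\cdots\Lambda_{n}$ is linear on $(0,\infty)$: the sign path of the iteration starting from $x>0$ coincides with that starting from $1$, so $\Lambda_{1}\cdots\Lambda_{n}(x)=x\cdot\wh{Z}_{n}$ where $\wh{Z}_{n}=\Lambda_{1}\cdots\Lambda_{n}(1)$ has the same distribution as $\wh{Z}_{n}$ under $\IPplus$. By Lemma~\ref{lem:comparison with LIFS}, on $\{U>0\}$ we have $\Psi_{1}\cdots\Psi_{T}(U)\ge U\cdot\wh{Z}_{T}-\wh{Y}_{T}$. Since $\wh{Z}_{T}>2t/M$ by definition of $T$, on the event $\{U>M,\,T\le N,\,\wh{Y}_{T}\le t\}$ this expression is strictly greater than $M\cdot(2t/M)-t=t$. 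Combining with the identity of the previous paragraph and choosing $A=\{T\le N,\,\wh{Y}_{T}\le t\}\in\calF_{T}$,
\[
\nu((t,\infty))\,\ge\,\Prob[U>M,\,T\le N,\,\wh{Y}_{T}\le t]\,=\,p\cdot\Prob[T\le N,\,\wh{Y}_{T}\le t].
\]
Letting $N\to\infty$, using $\wh{Y}_{T}\le\wh{Y}_{\infty}$, and applying the two hypotheses \eqref{eq1:lower bound t^kappa nu} and \eqref{eq2:lower bound t^kappa nu} then yields
\[
\nu((t,\infty))\,\ge\,p\cdot\bigl(\IPplus[T_{2t/M}<\infty]-\Prob[\wh{Y}_{\infty}>t]\bigr)\,\ge\,p\,t^{-\kappa}\bigl(K_{1}(M/2)^{\kappa}-K_{2}\bigr)
\]
for all sufficiently large $t$. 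Choosing $M$ so large that $K_{1}(M/2)^{\kappa}>K_{2}$ delivers the desired strictly positive $\liminf$.

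The principal obstacle is the mixing identity in the second paragraph: the length $N-T$ of the terminal composition is random, and the clean factorization requires the (nonobvious) observation that $\Psi_{k+1}\cdots\Psi_{N}(R^{\ast})\sim\nu$ for \emph{every} $k\le N$, so that summing over $\{T=k\}$ gives a product of $p$ and $\Prob[T\le N,\,\wh{Y}_{T}\le t]$. Everything else is a routine combination of the Lipschitz comparison in Lemma~\ref{lem:comparison with LIFS}, the positivity-preserving homogeneity of $\Lambda$ on $(0,\infty)$, and the two hypotheses of the lemma.
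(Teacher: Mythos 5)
Your proposal is correct and follows essentially the same route as the paper: the paper realizes your ``mixing identity'' by optional sampling of the bounded martingale $M_{n}=\int\1_{(t,\infty)}(\Psi_{1}\cdots\Psi_{n}(x))\,\nu(dx)$ at $T$, which is exactly your sum over $\{T=k\}$ using independence and $\Psi_{k+1}\cdots\Psi_{N}(R^{*})\sim\nu$, and then applies the same comparison $\Psi_{1}\cdots\Psi_{T}(x)\ge x\wh{Z}_{T}-\wh{Y}_{T}$ for $x$ large. The only (cosmetic) difference is the parametrization: the paper keeps $T=T_{t}$ and takes the initial value $x\ge K$ with $K$ large, whereas you take $T=T_{2t/M}$ with $U>M$; both choices serve to absorb the error term $\wh{Y}_{T}\le\wh{Y}_{\infty}$ and lead to the same conclusion.
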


\begin{proof}
We first show that
\begin{equation}\label{eq:lower bound nu}
\nu((t,\infty))\ \ge\ \Big(\Prob[\wh{Z}_{T_t}>t]-\Prob[\wh{Y}_{\infty}>(K-1)t]\Big)\,\nu((K,\infty))
\end{equation}
for all $t,K>0$. Fix $t$, write $T$ as shorthand for $T_{t}$, let $\cG_{n}$ denote the $\sigma$-field generated by $\Psi_{1},\ldots,\Psi_{n}$ for $n\ge 1$, and put $\Psi_{1}\cdots\Psi_{T}(x)=\Lambda_{1}\cdots\Lambda_{T}(x):=0$ for any $x\in\R$ if $T=\infty$. Observe that, by the $\Psi$-invariance of $\nu$ and the independence of $(\Psi_{n})_{n\ge 1}$ and $R$, the sequence
$$ M_{n}\,:=\,\Prob[\Psi_{1}\cdots\Psi_{n}(R)>t|\cG_{n}]\ =\ \int\1_{(t,\infty)}(\Psi_{1}\cdots\Psi_{n}(x))\ \nu(dx),\quad n\ge 1 $$
forms a bounded martingale under $\IPplus$ with mean $\nu(t)$. Therefore the optional sampling theorem provides  
$$ \nu((t,\infty))\ =\ \Eplus\left[\int\1_{\{\Psi_{1}\cdots\Psi_{T\wedge n}(x)>t\}}\ \nu(dx)\right] $$
for each $n\in\N$. Now use \eqref{eq:AL backward} of Lemma \ref{lem:comparison with LIFS} and
$$ \Lambda_{1}\cdots\Lambda_{T}(x)\ =\ x\Lambda_{1}\cdots\Lambda_{T}(1)\ =\ x\wh{Z}_{T}\quad\IPplus\text{-a.s.~for all }x>0 $$
to obtain upon passing to the limit $n\to\infty$ that
\begin{align*}
\nu((t,\infty))\ &=\ \Eplus\left[\int\1_{\{\Psi_{1}\cdots\Psi_{T}(x)>t,\ 
T<\infty  \}}\ \nu(dx)\right]\\
&\ge\ \Eplus\left[\int_{[K,\infty)}\1_{\{\Lambda_{1}\cdots\Lambda_{T}(x)-\wh{Y}_{T}>t,\ T<\infty\}}\ \nu(dx)\right]\\
&\ge\ \IPplus\big[T<\infty,\,K\wh{Z}_{T}-\wh{Y}_{T}>t\big]\,\nu([K,\infty))\\
&\ge\ \IPplus\big[T<\infty,\,\wh{Y}_{T}\le (K-1)t\big]\,\nu([K,\infty))\\
&=\ \Big(\IPplus\big[T<\infty\big]-\Prob\big[\wh{Y}_{T}>(K-1)t\big]\Big)\,\nu([K,\infty))\\
&\ge\ \Big(\IPplus\big[T<\infty\big]-\Prob\big[\wh{Y}_{\infty}>(K-1)t\big]\Big)\,\nu([K,\infty))
\end{align*} 
for any $K>0$ and thus \eqref{eq:lower bound nu}. By finally combining this for sufficiently large $K$ with \eqref{eq1:lower bound t^kappa nu}, \eqref{eq2:lower bound t^kappa nu}, we conclude the assertion of the lemma.
\qed \end{proof}

In view of this lemma, the proof of Prop.~\ref{prop:tail constants positive} reduces to a verification of \eqref{eq1:lower bound t^kappa nu} and \eqref{eq2:lower bound t^kappa nu}. The first of these conditions is shown as part of the next lemma, the second one in Lemma \ref{lem:aux2 pos const}.

\begin{Lemma}\label{lem:aux1 pos const}
Under the hypotheses of Prop.~\ref{prop:tail constants positive}, there exists a constant $K>0$ such that, for all $t\ge 1$,
\begin{gather}
t^{\kappa}\,\Prob\left[\sup_{n\ge 1}\Lri_{n}>t\right]\ \ge\ K^{-1},\label{eq1:aux1 pos const}
\shortintertext{and}
\sup_{n\ge 1}t^{\kappa}\,\Prob\left[\Lri_{n}>t\right]\ \le\ K.\label{eq3:aux1 pos const}
\end{gather}
Moreover, \eqref{eq1:lower bound t^kappa nu} holds, in fact
\begin{align}\label{eq2:aux1 pos const}
t^{\kappa}\,\Prob_{\delta}[T_{t}<\infty]\ =\ t^{\kappa}\,\Prob\left[\sup_{n\ge 1}\Lambda_{1}\cdots\Lambda_{n}(\delta)>t\right]\ \ge\ K
\end{align}
for some $K>0$ and each $\delta\in\bbS^{0}$.
\end{Lemma}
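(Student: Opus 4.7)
I will establish the three inequalities in the order \eqref{eq3:aux1 pos const}, \eqref{eq2:aux1 pos const}, \eqref{eq1:aux1 pos const}. Part \eqref{eq3:aux1 pos const} is a direct exponential-Markov bound; part \eqref{eq2:aux1 pos const} is the main work and relies on the harmonic change of measure from Lemma~\ref{lem:Pdelta^kappa case 1} combined with Markov-renewal tightness of overshoots; part \eqref{eq1:aux1 pos const} is an immediate consequence of \eqref{eq2:aux1 pos const}.

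For \eqref{eq3:aux1 pos const}, the i.i.d.\ symmetry of the $\Lambda_i$ yields $\Lri_n=\Lip(\Lambda_1\cdots\Lambda_n)\eqdist\Lip(\Lambda_n\cdots\Lambda_1)$, and by the proof of Lemma~\ref{lem:Ptheta vs Lambda}, $\Erw\Lip(\Lambda_n\cdots\Lambda_1)^\kappa\le 2\|P(\kappa)^n\|_\infty$. In the irreducible case, $\rho(\kappa)=1$ and $P(\kappa)^n v(\kappa)=v(\kappa)$ with $v(\kappa)$ having strictly positive, bounded entries, giving the uniform bound $\|P(\kappa)^n\|_\infty\le (\vplus(\kappa)\vee\vminus(\kappa))/(\vplus(\kappa)\wedge\vminus(\kappa))$. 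Markov's inequality then yields \eqref{eq3:aux1 pos const}.

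For \eqref{eq2:aux1 pos const}, consider the MRW $(\xi_n,S_n)$ under the tilted measure $\Prob_\delta^{(\kappa)}$ from Lemma~\ref{lem:Pdelta^kappa case 1}: the driving chain $(\xi_n)$ is irreducible on $\bbS^0$ and, by Lemma~\ref{lem:stationary drift}, $(S_n)$ has positive stationary drift $\rho'(\kappa)$. Hence $\widetilde\tau:=\inf\{n\ge 1:S_n>\log t\}$ is $\Prob_\delta^{(\kappa)}$-almost surely finite. The nonarithmeticity \eqref{eq:log A nonlattice} together with Prop.~\ref{prop:RThm case 1} guarantees that the joint overshoot $(\xi_{\widetilde\tau},\,S_{\widetilde\tau}-\log t)$ converges in distribution under $\Prob_\delta^{(\kappa)}$ to a proper limit law that charges $\{+\}\times[0,M]$ with mass at least $c>0$ for some $M>0$, uniformly in $t$. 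Undoing the change of measure yields
\[
\Prob_\delta\bigl[\widetilde\tau<\infty,\,\xi_{\widetilde\tau}=+,\,S_{\widetilde\tau}-\log t\le M\bigr]\,\ge\,\frac{v_\delta(\kappa)\,e^{-\kappa M}\,c}{\vplus(\kappa)}\,t^{-\kappa}.
\]
On the indicated event, $\Lambda_{\widetilde\tau}\cdots\Lambda_1(\delta)=\xi_{\widetilde\tau}\,e^{S_{\widetilde\tau}}>t$, giving the same lower bound for $\Prob_\delta[\sup_n\Lambda_n\cdots\Lambda_1(\delta)>t]$; the identification with $\Prob[\sup_n\Lambda_1\cdots\Lambda_n(\delta)>t]$ is then completed using the reversal symmetry $(\Lambda_1,\ldots,\Lambda_N)\eqdist(\Lambda_N,\ldots,\Lambda_1)$ at each finite truncation. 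Finally, \eqref{eq1:aux1 pos const} follows from $\sup_n\Lri_n\ge\sup_n|\Lambda_1\cdots\Lambda_n(\delta)|\ge\sup_n\Lambda_1\cdots\Lambda_n(\delta)$ and \eqref{eq2:aux1 pos const}.

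\textbf{Main obstacle.} The delicate step is the final identification in \eqref{eq2:aux1 pos const}: the forward trajectory $\Lambda_n\cdots\Lambda_1(\delta)$ and the backward trajectory $\Lambda_1\cdots\Lambda_n(\delta)$ are equidistributed only marginally at each fixed $n$, not jointly as processes, so passing from the Markov-renewal bound on the former to the supremum of the latter requires an additional symmetrization argument as in \cite{buraczewski:damek}.
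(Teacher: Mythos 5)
Your treatment of \eqref{eq3:aux1 pos const} is correct and in fact more elementary than the paper's: Markov's inequality combined with $\Erw\,\Lip(\Lambda_{n}\cdots\Lambda_{1})^{\kappa}\le 2\|P(\kappa)^{n}\|_{\infty}$ and the uniform bound on $\|P(\kappa)^{n}\|_{\infty}$ coming from the positive right eigenvector at $\rho(\kappa)=1$ does the job, whereas the paper derives it from the stronger limit statement $\lim_{t}t^{\kappa}\Prob[\sup_{n}|\Lambda_{n}\cdots\Lambda_{1}(\delta)|>t]=C(\delta)$. Your lower bound for the \emph{forward} supremum via the tilted measure $\Prob_{\delta}^{(\kappa)}$, positive drift, and the joint convergence of $(\xi_{N(t)},S_{N(t)}-t)$ is also essentially the paper's own computation of $C(\delta)$ (it cites Kesten~\cite{Kesten:74} and \cite{Alsmeyer:97} for exactly this first-passage version of the Markov renewal theorem).

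The proof is nevertheless incomplete at its central step, and you have located the gap yourself without closing it. The reversal symmetry $(\Lambda_{1},\ldots,\Lambda_{N})\eqdist(\Lambda_{N},\ldots,\Lambda_{1})$ sends the prefix compositions $\Lambda_{1}\cdots\Lambda_{n}$ of the backward product to the \emph{suffix} compositions $\Lambda_{N}\cdots\Lambda_{N-n+1}$ of the forward product, not to the prefix compositions $\Lambda_{n}\cdots\Lambda_{1}$; hence $\sup_{n\le N}\Lambda_{1}\cdots\Lambda_{n}(\delta)$ and $\sup_{n\le N}\Lambda_{n}\cdots\Lambda_{1}(\delta)$ are \emph{not} equidistributed, and "reversal at each finite truncation" does not complete the identification in \eqref{eq2:aux1 pos const}. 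Converting the marginal (fixed-$n$) equality in law into a bound on the supremum is precisely what the paper flags as "the main difficulty... to reverse the order of random iterations," and its resolution is the actual content of the lemma: one bounds the occupation sum $\sum_{n}\Prob[t<\Lri_{n}\le mt]$ from above by $\Prob[\tau_{m,t}<\infty]\cdot\sum_{n}\Prob[\Lri_{n}>1/m]$ via the stopping time $\tau_{m,t}=\inf\{n:t<\Lri_{n}<mt\}$ (the second factor being finite by $\rho(\vth)<1$ for some $\vth$), rewrites the occupation sum using the fixed-$n$ identity $\Lri_{n}\eqdist\Lle_{n}$, and evaluates it from below by the Markov renewal theorem; then \eqref{eq2:aux1 pos const} is deduced from \eqref{eq1:aux1 pos const} by a sign-flipping argument exploiting irreducibility ($\Prob[\Aplus<-\eta]\wedge\Prob[\Aminus>\eta]>0$). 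Deferring this to "an additional symmetrization argument as in \cite{buraczewski:damek}" leaves both \eqref{eq1:aux1 pos const} and \eqref{eq2:aux1 pos const} unproved, since in your logical ordering \eqref{eq1:aux1 pos const} is derived from \eqref{eq2:aux1 pos const}.
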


\begin{proof}
Recall that $(\xi_{n},S_{n})_{n\ge 0}$ with $S_{n}=\log|\Lambda_{n}\cdots\Lambda_{1}(\xi_{0})|$ constitutes a \MRW, which is nonarithmetic under the hypotheses of Thm.~\ref{thm:main case 1}. Put
\begin{gather*}
N(t)\,:=\,\inf\{n\ge 1:S_{n}>t\},\quad R(t)\,:=\,(S_{N(t)}-t)\1_{\{N(t)<\infty\}}
\shortintertext{and}
Z(t)\,:=\,\xi_{N(t)}\1_{\{N(t)<\infty\}}.
\end{gather*}
We first show that
\begin{equation}\label{eq:d3}
C(\delta)\,:=\,\lim_{t\to\infty}t^{\kappa}\,\Prob\!\left[\sup_{n\ge 1}|\Lambda_{n}\cdots\Lambda_{1}(\delta)|>t\right]
\end{equation} 
is strictly positive for each $\delta\in\bbS^{0}$.
This implies \eqref{eq3:aux1 pos const} when combined with the fact that $\Lle_{n}\eqdist\Lri_{n}$ for each $n\ge 1$, where
\begin{equation}\label{eq:lle}
\Lle_{n}\,:=\,\Lip(\Lambda_{n}\cdots\Lambda_{1})
\end{equation}
for $n\in\N$ and $\Lle_{0}:=1$. On the other hand, it does not directly imply \eqref{eq1:aux1 pos const} because the equality in law holds for fixed $n$ only and the main difficulty in our proof is indeed to reverse the order of random iterations.

\vspace{.1cm}
To prove our claim, define $f(\delta,t):=e^{-\kappa t}/v_{\delta}(\kappa)\,\1_{\IRge}(t)$ for $(\delta,t)\in\bbS^{0}\times\R$. Then
\begin{align*}
\frac{e^{\kappa t}}{v_{\delta}(\kappa)}\,&\Prob\!\left[\sup_{n\ge 1}|\Lambda_{n}\cdots\Lambda_{1}(\delta)|>e^{t}\right]\ =\ \frac{e^{\kappa t}}{v_{\delta}(\kappa)}\,\Prob_{\delta}\!\left[\sup_{n\ge 1}S_{n}>t\right]\\
&=\ \frac{e^{\kappa t}}{v_{\delta}(\kappa)}\,\Prob_{\delta}[N(t)<\infty]\\
&=\ \sum_{n\ge 1}\Erw_{\delta}^{(\kappa)}\!\left[\1_{\{N(t)=n\}}e^{-\kappa(S_{n}-t)}/ v_{\xi_{n}}(\kappa)\right]\\
&=\ \sum_{n\ge 1}\Erw_{\delta}^{(\kappa)}\!\left[\1_{\{N(t)=n\}}f(\xi_{n},S_{n}-t)\right]\\
&=\ \Erw_{\delta}^{(\kappa)}\!\left[\1_{\{N(t)<\infty\}}f(Z(t),R(t))\right],
\end{align*}
and the last expectation converges to a positive limit by an extension of the Markov Renewal  Lemma \ref{lem:RThm case 1}, see \cite[Thm.~1]{Kesten:74} and \cite[Cor.~3.2]{Alsmeyer:97}.

\vspace{.1cm}
Turning to the proof of \eqref{eq1:aux1 pos const}, we  fix $m\in\N$, define the stopping time
$$ \tau_{m,t}\,:=\,\inf\{  n\ge 0:\; t < \Lri_{n} < mt\} $$
and point out that
\begin{equation}\label{eq:star}
\begin{split}
\sum_{n\ge 1}\,&\Prob[t<\Lri_{n}\le mt]\\
&=\ \sum_{n\ge 0}\Prob[\tau_{m,t}<\infty,\,t<\Lri_{\tau_{m,t}+n}\le mt]\\
&\le\ \sum_{n\ge 0}\Prob[\tau_{m,t}<\infty,\,\Lri_{\tau_{m,t}}\cdot\Lip(\Lambda_{\tau_{m,t}+1}\cdots\Lambda_{\tau_{m,t}+n})>t]\\
&\le\ \sum_{n\ge 0}\Prob[\tau_{m,t}<\infty,\,\Lip(\Lambda_{\tau_{m,t}+1}\cdots\Lambda_{\tau_{m,t}+n})>1/m]\\
&\le\ \Prob[\tau_{m,t}<\infty]\sum_{n\ge 1}\Prob[\Lri_{n}>1/m].
\end{split}
\end{equation}
Since $\rho(\vth)<1$ for some $\vth>0$, Lemma \ref{lem:Ptheta vs Lambda} ensures the existence of constants $\rho<1$ and $K<\infty$ such that
\begin{equation}\label{eq:mean Lip power n}
\Erw\big[{\Lri_{n}}^{\vth}\big]\,\le\,K \rho^{n}
\end{equation}
for all $n\in \N$, which in turn implies that
$$ \beta\,:=\,\sum_{n\ge 1}\Prob[\Lri_{n}>1/m] $$
is finite.
Then \eqref{eq:star} yields
\begin{align*}
\Prob\bigg[\sup_{n\ge 1}\Lri_{n}>t\bigg]\ &\ge\ \Prob[\tau_{m,t}<\infty]\\
&\ge\ \beta^{-1}\,\sum_{n\ge 1}\Prob[t<\Lri_{n}\le mt].
\end{align*}
Next, use $\Lri_{n}\eqdist\Lle_{n}$ and
\begin{equation}\label{eq:def of Lip(Lambda_{n})}
\Lle_{n}\ =\ |\Lambda_{n}\cdots\Lambda_{1}(1)|\vee|\Lambda_{n}\cdots\Lambda_{1}(-1)|
\end{equation}
for each $n\in\N$ to obtain
\begin{align*}
\sum_{n\ge 1}\,&\Prob[t<\Lri_{n}\le mt]\ =\ \sum_{n\ge 1}\,\Prob[t<\Lle_{n}\le mt]\\
&\ge\ \sum_{n\ge 1}\Prob[t<|\Lambda_{n}\cdots\Lambda_{1}(1)|\le mt,|\Lambda_{n}\cdots\Lambda_{1}(-1)|\le mt]\\
&\ge\ \sum_{n\ge 1}\Big(\Prob[t<|\Lambda_{n}\cdots\Lambda_{1}(1)|\le mt]-\Prob[|\Lambda_{n}\cdots\Lambda_{1}(-1)|>mt]\Big)\\
&=\ \sum_{n\ge 1}\Big(\IPplus[0<S_{n}-\log t\le\log m]-\IPminus[S_{n}-\log t>\log m]\Big)\\
&=:~I_{1}(t)+I_{2}(t).
\end{align*}
Moreover, with $v^{*}(\kappa):=\vminus(\kappa)\vee\vplus(\kappa)>0$,
\begin{gather*}
I_{1}(t)\ \ge\ \frac{t^{-\kappa}\vplus(\kappa)}{v^{*}(\kappa)}\sum_{n\ge 1}\Eplus^{(\kappa)}e^{\kappa(\log t-S_{n})}\1_{[0,\log m)}(S_{n}-\log t)
\shortintertext{and, similarly,}
I_{2}(t)\ \ge\ \frac{t^{-\kappa}\vminus(\kappa)}{v^{*}(\kappa)}\sum_{n\ge 1}\Eminus^{(\kappa)}e^{\kappa(\log t-S_{n})}\1_{(\log m,\infty)}(S_{n}-\log t)
\end{gather*}
By another appeal to Lemma \ref{lem:RThm case 1},
\begin{gather*}
\lim_{t\to\infty}t^{\kappa}I_{1}(t)\ \ge\ \frac{\vplus(\kappa)}{v^{*}(\kappa)\rho'(\kappa)}\int_{0}^{\log m}e^{-\kappa x}\ dx\ =\ \frac{\vplus(\kappa)(1-m^{-\kappa})}{\kappa v^{*}(\kappa)\rho'(\kappa)}
\shortintertext{and}
\lim_{t\to\infty}t^{\kappa}I_{2}(t)\ \ge\ \frac{\vminus(\kappa)}{v^{*}(\kappa)\rho'(\kappa)}\int_{\log m}^{\infty}e^{-\kappa x}\ dx\ =\ \frac{\vminus(\kappa)m^{-\kappa}}{\kappa v^{*}(\kappa)\rho'(\kappa)}.
\end{gather*}
By putting the previous estimates together and fixing $m$ sufficiently large, we see that,
\begin{align*}
\liminf_{t\to\infty}t^{\kappa}\,\Prob\left[\sup_{n\ge 1}\Lri_{n}>t\right]\ \ge\ \liminf_{t\to\infty}\frac{t^{\kappa}}{\beta}(I_{1}(t)+I_{2}(t))\ >\ 0,
\end{align*}
and thus \eqref{eq1:aux1 pos const} holds true.

\vspace{.2cm}
Left with the proof of \eqref{eq2:aux1 pos const}, we first verify the weaker assertion
\begin{align}\label{eq3:aux1 pos const2}
t^{\kappa}\,\Prob\left[\sup_{n\ge 1}|\Lambda_{1}\cdots\Lambda_{n}(\delta)|>t\right]\ =\ t^{\kappa}\,\Prob_{\delta}\left[\sup_{n\ge 1}|\wh{Z}_{n}|>t\right]\ \ge\ K_{1}
\end{align}
for some $K_{1}>0$, each $\delta\in\bbS^{0}$ and all $t\ge 1$. It is enough to consider $\delta=+1$. By irreducibility, we can fix $\eta\in (0,1)$ sufficiently small such that
\begin{gather*}
p:\,=\,\Prob(\Aplus<-\eta)\wedge\Prob(\Aminus>\eta)\,>\,0.
\shortintertext{Next, put}
\tau\,=\,\tau_{t/\eta}\,:=\,\inf\{n\ge 1 : \Lri_{n}>t/\eta\}
\shortintertext{with associated events}
B_{+}\,:=\,\{\Lri_{\tau}
=|\Lambda_{1}\cdots\Lambda_{\tau}(+1)|,\tau<\infty\},\\
B_{-}\,:=\,\{\Lri_{\tau}
=|\Lambda_{1}\cdots\Lambda_{\tau}(-1)|,\tau<\infty\}.
\end{gather*}
Notice that $|\Lambda_{1}\cdots\Lambda_{\tau}(+1)|>t/\eta>t$ on $B_{+}$,
\begin{align*}
|\Lambda_{1}\cdots\Lambda_{\tau+1}(+1)|\ =\ |\Lambda_{1}\cdots\Lambda_{\tau}(-1)||\Lambda_{\tau+1}(+1)|\ >\ \frac{t}{\eta}\cdot\eta\ =\ t
\end{align*}
on $B_{-}\cap\{\Lambda_{\tau+1}(+1)<-\eta\}$, and that $\Lambda_{\tau+1}(+1)$ is independent of $B_{-}$ with the same law as $\Lambda_{1}(+1)=\Aplus_{1}$. Then it follows that
\begin{align*}
\Prob&\left[\sup_{n\ge 1}|\Lambda_{1}\cdots\Lambda_{n}(+1)|>t\right]\ \ge\ \Prob\left[\tau<\infty,\sup_{n\ge 1}|\Lambda_{1}\cdots\Lambda_{n}(+1)|>t\right]\\
&\ge\ \Prob[B_{+}]\,+\,\Prob[B_{-}\cap\{\Lambda_{\tau+1}(+1)<-\eta\}]\\
&=\ \Prob[B_{+}]\,+\,\Prob[B_{-}]\,\Prob[\Lambda_{\tau+1}(+1)<-\eta]\\
&\ge\ p\big(\Prob[B_{+}]\,+\,\Prob[B_{-}]\big)\ \ge\ p\,\Prob[\tau<\infty]\ \ge\ \frac{p\eta^{\kappa}}{Kt^{\kappa}}\ =\ \frac{K_{1}}{t^{\kappa}}
\end{align*}
for all $t\ge 1$ which is the desired result.

\vspace{.2cm}
Finally, we must prove that \eqref{eq3:aux1 pos const2} does indeed already imply \eqref{eq2:aux1 pos const}. To this end, we note that
\begin{gather*}
\begin{split}
\Prob\bigg[\sup_{n\ge 1}&\,\Lambda_{1}\cdots\Lambda_{n}(+1)>t\bigg]\\
&=\ \Prob\big[\exists\,n\ge 1:\Lambda_{1}\cdots\Lambda_{n}(+1)>0,\,
|\Lambda_{1}\cdots\Lambda_{n}(+1)|>t\big]\\&
\ge\ \Prob\big[\exists\,n\ge 1:\Lambda_{1}\cdots\Lambda_{n}(+1)>0,\,|\Lambda_{1}\cdots\Lambda_{n}(+1)|>t/\eta\big]
\end{split}
\shortintertext{and}
\begin{split}
\Prob&\bigg[\sup_{n\ge 1}\Lambda_{1}\cdots\Lambda_{n}(+1)>t\bigg]\\
&\ge\ \Prob\bigg[\Lambda_{1}(-1)>\eta,\,\sup_{n\ge 2}\Lambda_{1}\cdots\Lambda_{n}(+1)>t\bigg]\\
&\ge\ \Prob\big[\Aminus_{1}>\eta,\exists\,n\ge 2:\Lambda_{2}\cdots\Lambda_{n}(+1)<0,\Aminus_{1}|\Lambda_2\cdots\Lambda_{n}(+1)|>t\big]\\
&\ge\ p\,\Prob\big[\exists\,n\ge 1: \Lambda_{1}\cdots\Lambda_{n}(+1)<0,\,|\Lambda_{1}\cdots\Lambda_{n}(+1)|>t/\eta\big]
\end{split}
\end{gather*}
Hence, assuming \eqref{eq3:aux1 pos const2}, a combination of both yields
\begin{align*}
2\,\Prob&\bigg[\sup_{n\ge 1}\Lambda_{1}\cdots\Lambda_{n}(+1)>t\bigg]\\
&\ge\ \Prob\big[\exists\,n\ge 1:\Lambda_{1}\cdots\Lambda_{n}(+1)>0,\,|\Lambda_{1}\cdots\Lambda_{n}(+1)|>t/\eta\big]\\
&\hspace{1cm}+\ p\,\Prob\big[\exists\,n\ge 1:\Lambda_{1}\cdots\Lambda_{n}(+1)<0,\,|\Lambda_{1}\cdots\Lambda_{n}(+1)|>t/\eta\big]\\
&\ge\ p\,\Prob\left[\sup_{n\ge 1}|\Lambda_{1}\cdots\Lambda_{n}(+1)|>t/\eta\right]\ \ge\ pK_{1}\eta^{\kappa}t^{-\kappa}
\end{align*}
for all $t\ge 1$ as claimed.
\qed \end{proof}

\begin{Lemma}\label{lem:aux2 pos const}
Under the hypotheses of Prop.~\ref{prop:tail constants positive}, Condition \eqref{eq2:lower bound t^kappa nu} holds.
\end{Lemma}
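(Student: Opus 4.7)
The plan is to bound $\wh Y_\infty$ from above by two Markov-modulated geometric perpetuities and then apply the Markov renewal machinery of Subsect.~\ref{subsec:MRT}. By the i.i.d.\ property of $(\Lambda_k)_{k\ge 1}$, a time-reversal argument (for each fixed horizon and then passage to the limit) gives $\wh Y_\infty \eqdist \sum_{n \ge 0} \Lle_n B_{n+1}$ with $\Lle_n$ as in \eqref{eq:lle}; combining \eqref{eq:def of Lip(Lambda_{n})} with the elementary estimate $a \vee b \le a + b$ yields $\Lle_n \le e^{S_n^+} + e^{S_n^-}$, where $S_n^\delta := \log|\Lambda_n \cdots \Lambda_1(\delta)|$. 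Consequently
$$\wh Y_\infty \,\le\, Y_+ + Y_-,\qquad Y_\delta \,:=\, \sum_{n \ge 0} e^{S_n^\delta} B_{n+1},$$
and since under $\Prob_\delta$ the process $(S_n^\delta)_{n \ge 0}$ coincides with the MRW $(S_n)_{n \ge 0}$ starting at $S_0 = 0$ while the $B_n$ remain i.i.d.\ with $\Erw B^\kappa < \infty$ and independent of $(\xi_n, S_n)$, it suffices to show $\Prob_\delta[Y_\delta > t] \le K t^{-\kappa}$ for each $\delta \in \bbS^{0}$.

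For this perpetuity estimate, the exponential change of measure from Lemma~\ref{lem:Pdelta^kappa case 1} together with $\rho(\kappa) = 1$ rewrites
$$\sum_{n \ge 0} \Prob_\delta[S_n > x]\,=\,v_\delta(\kappa)\,e^{-\kappa x}\sum_{n \ge 0} \Erw^{(\kappa)}_\delta\!\Big[v_{\xi_n}(\kappa)^{-1}\,e^{-\kappa(S_n - x)}\,\1_{\{S_n > x\}}\Big],$$
and the sum on the right converges to a positive constant as $x \to \infty$ by the Markov renewal Lemma~\ref{lem:RThm case 1} applied under $\Prob^{(\kappa)}$ (which has positive drift $\rho'(\kappa)$) to the directly Riemann integrable function $g(\epsilon,y) := v_\epsilon(\kappa)^{-1}\,e^{-\kappa y}\,\1_{(0,\infty)}(y)$. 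This yields $\sum_n \Prob_\delta[S_n > x] \le C\,e^{-\kappa x}$ for $x$ large, whereas for smaller $x$ the crude Markov estimate $\sum_n \Prob_\delta[S_n > x] \le C_\theta\,e^{-\theta x}$ at any $\theta \in (0,\kappa)$ (via $\rho(\theta) < 1$) closes the gap. Integrating against the independent law of $B_{n+1}$ and splitting the range of $B_{n+1}$ at $t$ then gives
$$\Prob_\delta\!\Big[\sup_{n \ge 0} e^{S_n} B_{n+1} > t\Big]\,\le\,\sum_{n \ge 0} \Prob_\delta[e^{S_n} B_{n+1} > t]\,\le\,K\,t^{-\kappa}.$$

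The principal obstacle will be passing from the tail of the supremum to that of the full sum $Y_\delta$. To this end I plan to use a one-big-jump decomposition: for $M$ sufficiently large, let $\tau_t := \inf\{n : e^{S_n} B_{n+1} > t/M\}$. On $\{\tau_t = \infty\}$ every term of $Y_\delta$ is bounded by $t/M$, and the event $\{Y_\delta > t\}$ is controlled by Chebyshev's inequality at a moment $\theta \in (0,\kappa)$, using that $\Erw_\delta Y_\delta^\theta < \infty$ by a Minkowski computation analogous to the one in the proof of Lemma~\ref{lem:moments of R}. On $\{\tau_t < \infty\}$, the strong Markov property of $(\xi_n, S_n)$ writes the contribution from times $n \ge \tau_t$ as $e^{S_{\tau_t}}\,Y'_{\xi_{\tau_t}}$ with $Y'_\epsilon \eqdist Y_\epsilon$ under $\Prob_\epsilon$ and independent of $(\xi_{\tau_t}, S_{\tau_t})$; integrating the previously established supremum bound against the joint law of $(\xi_{\tau_t}, S_{\tau_t})$ again contributes $O(t^{-\kappa})$. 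Summing the two parts yields the desired estimate \eqref{eq2:lower bound t^kappa nu}.
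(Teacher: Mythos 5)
Your reduction rests on the claim that $\wh{Y}_{\infty}=\sum_{n\ge 0}\Lri_{n}B_{n+1}$ has the same law as $\sum_{n\ge 0}\Lle_{n}B_{n+1}$ "by time reversal for each fixed horizon". This is false. Reversing the i.i.d.\ blocks $(\Lambda_{k},B_{k})_{k\le N}$ identifies $\wh{Y}_{N}$ in law with $Y_{N}=\sum_{k=1}^{N}\Lip(\Lambda_{N}\cdots\Lambda_{k+1})B_{k}$, \emph{not} with $\sum_{k=1}^{N}\Lle_{k-1}B_{k}$; only the one-dimensional marginals $\Lri_{n}\eqdist\Lle_{n}$ agree, the joint laws of the two sequences do not, because $\Lip$ of a composition is not symmetric in the order of the factors. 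Concretely, with $B_{k}\equiv 1$ and $\Lambda$ equal to $a$ (slopes $\Aminus=1,\ \Aplus=2$) or $b$ (slopes $\Aminus=-3,\ \Aplus=1$) with probability $1/2$ each, one has $\Lip(a\circ b)=6\ne 3=\Lip(b\circ a)$, and $1+\Lip(\Lambda_{1})+\Lip(\Lambda_{1}\circ\Lambda_{2})$ takes the value $9$ with probability $1/4$ and $7$ otherwise, whereas $1+\Lip(\Lambda_{1})+\Lip(\Lambda_{2}\circ\Lambda_{1})$ takes the values $6,10,7,7$. Since all of your renewal estimates live on the forward \MRW\ $(\xi_{n},S_{n})$ while $\wh{Y}_{\infty}$ is built from the backward products $\Lri_{n}$, the reduction to $Y_{+}+Y_{-}$ collapses. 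This is precisely the obstacle the paper emphasizes in the proof of Lemma \ref{lem:aux1 pos const} ("the equality in law holds for fixed $n$ only and the main difficulty in our proof is indeed to reverse the order of random iterations"); the paper's proof only ever uses the marginal identity $\Prob[V_{n}^{i}]=\Prob[U_{n}^{i}]$, combined with union bounds and a decoupling argument along an $M$-skeleton.

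There is a second gap even modulo the reduction: your treatment of the no-big-jump event is too weak. Chebyshev's inequality at a moment $\theta\in(0,\kappa)$ gives only $O(t^{-\theta})$, and the truncation at $t/M$ cannot push this to $t^{-\kappa}$ by moment bounds, because at the critical exponent $\Erw e^{\kappa S_{n}}$ does not decay in $n$ (as $\rho(\kappa)=1$), so the relevant series is not summable and one loses at least a logarithmic factor. Any loss is fatal here: in Lemma \ref{lem:lower bound t^kappa nu} the bound \eqref{eq2:lower bound t^kappa nu} must be dominated, after choosing $K$ large, by the lower bound $K_{1}t^{-\kappa}$ on $\IPplus[T_{t}<\infty]$, which forces the exact order $t^{-\kappa}$. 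Obtaining it on the event that all summands are $\le t$ requires the counting argument of the paper: one shows that the number of indices $n$ with $\Lri_{n-1}B_{n}\in(t/e^{j+1},t/e^{j}]$ has a geometrically decaying tail, $\Prob[\card(W_{j})>\ell]\le K\rho^{\ell}e^{\kappa j}t^{-\kappa}$, and that $\wh{Y}_{\infty}>t$ forces some $W_{j}$ to be large. Both gaps concern the actual substance of the paper's proof rather than technicalities.
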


\begin{proof}
Recall that $\wh{Y}_{\infty}=\sum_{n\ge 0}\Lri_{n}B_{n+1}$. For \eqref{eq2:lower bound t^kappa nu}, it therefore suffices to verify
\begin{equation}\label{eq:pt2x}
\Prob\Big[\max_{n\ge 0}\Lri_{n}B_{n+1} > t\Big]\ \le\ \frac{K}{t^{\kappa}}.
\end{equation}
and
\begin{equation}\label{eq:pt2}
\Prob\Big[\wh{Y}_{\infty}>t,\,\max_{n\ge 0}\Lri_{n}B_{n+1}\le t\Big]\ \le\ \frac{K}{t^{\kappa}}.
\end{equation}
Here and in the following $K$ denotes a generic positive constant that may differ from line to line. To prove \eqref{eq:pt2x}, we recall \eqref{eq:lle} and define the two events
\begin{equation}\label{eq:d31}
V_{n}^{i}\,=\,\big\{e^{i}t<\Lri_{n}B_{n+1}\le e^{i+1}t\big\},\quad U_{n}^{i}\,\,=\big\{e^{i}t <\Lle_{n}B_{0} \le  e^{i+1} t  \big\}.
\end{equation}
of equal probability for all $i,n\in\N_{0}$.

\vspace{.1cm}
Fix $\vth>0$ and $\rho<1$ as in inequality \eqref{eq:mean Lip power n} and choose $M\in\N$ large enough such that $2e^{\vth}K\rho^M<1-\rho^M$. Then
\begin{align*}
\Prob&\Big[\max_{n\ge 0}\Lri_{n}B_{n+1} > t\Big]\\
&=\ \sum_{i\ge 0} \Prob\Big[e^{i}t<\max_{n\ge 1}\Lri_{n-1}B_{n}\le e^{i+1}t\Big]
\ =\ \sum_{i\ge 0}\Prob\bigg[\bigcup_{n\ge 0}V_{n}^{i}\bigg] \\
&=\ \sum_{i\ge 0}\Prob\bigg[\bigcup_{m=0}^{M-1}\bigcup_{n\ge 0}V_{nM+m}^{i}\bigg]
\ \le\ \sum_{m=0}^{M-1}\sum_{i\ge 0}\Prob\bigg[\bigcup_{n\ge 0}V_{nM+m}^{i}\bigg]\\
&\le\ \sum_{m=0}^{M-1}\sum_{i\ge 0}\sum_{n\ge 0}\Prob\big[V_{nM+m}^{i}\big]
\ =\ \sum_{m=0}^{M-1} \sum_{i\ge 0} \sum_{n\ge 0}\Prob\big[U_{nM+m}^{i}\big],
\end{align*} 
whence \eqref{eq:pt2x} follows if we prove that
\begin{equation}\label{eq:d1}
\sum_{i\ge 0}\sum_{n\ge 0}\Prob\big[ U_{nM+m}^{i}\big]\ \le\ \frac{K}{t^{\kappa}}
\end{equation} 
for $m=0,\ldots,M-1$. We confine ourselves to the case $m=0$ and note first that
\begin{equation}\label{eq:d2}
\sum_{n\ge 0}\Prob\big[U_{nM}^{i}\big]\ \le\ \Prob\bigg[\bigcup_{n\ge 0}U_{nM}^{i}\bigg]\,+\,\sum_{n\ge 0}\sum_{j>n}\Prob[U_{nM}^{i}\cap U_{jM}^{i}]
\end{equation}
for any $i$. Then, by using how $\rho$ and $M$ have been chosen, we find for any fixed $n$ that
\begin{align*}
\sum_{j>n}\Prob[U_{nM}^{i}\cap U_{jM}^{i}]\ &\le\ \sum_{j>n}\Prob\big[\Lle_{nM}B_{0}\le e^{i+1}t,
\,\Lle_{jM}B_{0}>e^{i}t\big]\\
& \le\ \Prob[U_{nM}^{i}]\sum_{j>n}\Prob\big[\Lip(\Lambda_{jM}\cdots \Lambda_{nM+1}) > e^{-1}\big]\\
&\le\ \Prob[U_{nM}^{i}]\sum_{j>n}e^{\vth}K\rho^{(j-n)M}\ \le\ \frac{1}{2}\,\Prob[U_{nM}^{i}].
\end{align*}
which in combination with \eqref{eq:d2} leads to
$$ \sum_{n\ge 0}\Prob\big[ U_{nM}^{i}\big]\ \le\ 2\,\Prob\bigg[\bigcup_{n\ge 0}U_{nM}^{i}\bigg] $$
and then finally to
\begin{align*}
\sum_{i\ge 0} \sum_{n\ge 0} \Prob\big[ U_{nM}^{i}\big]\ &\le\ 2\sum_{i\ge 0} \Prob\bigg[\bigcup_{n\ge 0} U_{nM}^{i}\bigg]\ \le\ 2\sum_{i\ge 0} \Prob\bigg[\sup_{n\ge 0}\Lle_{n}>e^{i}t/B_{0}\bigg]\\
&\le\ \bigg(K\,\Erw B_{0}^{\kappa}\sum_{i\ge 0}e^{-i \kappa} \bigg) \cdot t^{-\kappa},
\end{align*}
where the penultimate inequality follows from the definition of the $U_{n}^{i}$ (see \eqref{eq:d31}) and the last one by \eqref{eq:d3} and the independence of $\sup_{n}\Lri_{n}$ and $B_{0}$. This completes the proof of \eqref{eq:d1} and thus also of \eqref{eq:pt2x}. 

\medskip

Turning to inequality \eqref{eq:pt2}, we define the family of events
$$ W_{j}\,:=\,\bigg\{n:\; \frac{t}{e^{j+1}}<\Lri_{n-1}B_{n}\le\frac{t}{e^{j}}\bigg\},\quad j\ge 0 $$
and claim that, for some $\rho\in (0,1)$ and all $j\in\N_{0}$,
\begin{equation*}
\Prob\big[\card(W_{j})>\ell\big]\,\le\,K\frac{\rho^{\ell}e^{\kappa j}}{t^\kappa},
\end{equation*}
where $\card$ denotes cardinality of a set. To verify this, pick $\vth\in (0,\kappa)$ and $\rho\in (0,1)$ in accordance with \eqref{eq:mean Lip power n} and observe that
\begin{equation*}
\Prob\bigg[\sup_{m\ge\ell}\Lri_{m-1}B_{m}\ge s\bigg]\,\le\,\sum_{m\ge\ell}\frac{\Erw{\Lri_{m-1}}^{\vth}\,\Erw B^{\vth}}{s^{\vth}}\,\le\,K\,\frac{\rho^{\ell}\,\Erw B^{\vth}}{s^{\vth}}.
\end{equation*}
Let $\tau_{i}=\tau_{i}(j)$ for $i=1,2$ be two  smallest elements of $W_{j}$, with $\tau_{1}:=\infty$ if $W_{j}$ is empty and $\tau_{2}:=\infty$ if $\card(W_{j})\le 1$. Put also $L_{k}^{k+m}:=\Lip(\Lambda_{k}\cdots\Lambda_{m+k})$. Then
\begin{align*}
&\Prob[\card(W_{j})>\ell+1]\\
&\le\ \Prob\bigg[\tau_{2}<\infty,\ \exists\,m\ge\ell: \frac{t}{e^{j+1}} < \Lri_{\tau_{2}+m-1}B_{\tau_{2}+m}<\frac{t}{e^{j}}\bigg]\\
&\le\ \Prob\bigg[\tau_{2}<\infty,\ \exists\,m\ge\ell:\Lri_{\tau_{2}-1}\Lip(\Lambda_{\tau_{2}})L^{\tau_{2}+m-1}_{\tau_{2}+1} B_{\tau_{2}+m}>\frac{t}{e^{j+1}}\bigg]\\
& \le\ \Prob\bigg[\tau_{2}<\infty,\ \exists\,m\ge\ell:L^{\tau_{2}+m-1}_{\tau_{2}+1} B_{\tau_{2}+m}>\frac{t }{e^{j+1}
\Pi_{\tau_{2}-1}B_{\tau_{2}}}\,\frac{B_{\tau_{2}}}{\Lip(\Lambda_{\tau_{2}})}\bigg]\\
&\le\ \Prob\bigg[\tau_{2}<\infty,\ \exists\,m\ge\ell:L^{\tau_{2}+m-1}_{\tau_{2}+1} B_{\tau_{2}+m}>\frac{B_{\tau_{2}}}{e\,\Lip(\Lambda_{\tau_{2}})}\bigg]\\
&\le\ Ke^{\vth}\rho^{\ell}\,\Erw(B^{\vth})\,\Erw\bigg[\1_{\{\tau_{2}<\infty\}}\frac{\Lip(\Lambda_{\tau_{2}})^{\vth}}{B_{\tau_{2}}^{\vth}}\bigg]
\qquad (\text{since } B_{\tau_{2}}\ge 1)
\\
&\le\ Ke^{\vth}\rho^{\ell}\,\Erw(B^{\vth})\,\Erw\big[\Lip(\Lambda_{{\tau_{2}}})^{\vth}\big] \Prob\big[\tau_1 <\infty\big]\\
&\le\ Ke^{\vth}\rho^{\ell}\,\Erw(B^{\vth})\,\Erw\big[\Lip(\Lambda_{{\tau_{2}}})^{\vth}\big] \Prob\bigg[\sup_{n\ge 1}\Lri_{n-1}B_{n}>\frac{t}{e^{j+1}}\bigg]\\
&\le\ K\rho^{\ell+1}  e^{\kappa j} t^{-\kappa},
\end{align*}
where the last inequality follows from \eqref{eq:d3}.

\vspace{.1cm}
Returning to the proof of \eqref{eq:pt2}, we note that the occurrence of $\wh{Y}_{\infty}>t$ and $\sup_{n\ge 1}\Lri_{n-1}B_{n}\le t$ entails that at least one $W_{j}$ must be relatively large, more precisely, that a.s.~$\card(W_{j})>e^{j}/2(j+1)^{2}$ for some $j\ge 0$. Indeed, if the latter fails, then
$$ \wh{Y}_{\infty}\ =\ \sum_{j\ge 0}\sum_{n\in W_{j}}\Lri_{n-1}B_{n}\ \le\ \sum_{j\ge 0}\card(W_{j})\cdot\frac t{e^{j}}\ \le\ \sum_{j\ge 0}\frac{e^{j}}{2(j+1)^{2}}\cdot\frac{t}{e^{j}}\ <\ t. $$
Hence, we finally arrive at
\begin{gather*}
\Prob\big[ \wh{Y}_{\infty}>t,\max\Pi_{n-1} B_{n}\le t\big]\ \le\ \sum_{j\ge 0}\Prob\bigg[\text{card} (W_{j})>\frac {e^{j}}{2(j+1)^{2}}\bigg]\\
\le\ K\sum_{j\ge 0}\frac{\rho^{e^{j}/2(j+1)^{2}}e^{\kappa j}}{t^\kappa}\ <\ Kt^{-\kappa}
\end{gather*}
and thus at the desired conclusion.
\qed \end{proof}

Proposition \ref{prop:tail constants positive} is a direct consequence of the Lemmata \ref{lem:lower bound t^kappa nu}, \ref{lem:aux1 pos const} and \ref{lem:aux2 pos const}.

\vspace{.3cm}

\textit{\bfseries Case 2 (unilateral case)}. $p_{-+}>0$ and $p_{+-}=0$.

\begin{Prop}\label{prop:tail constants positive2}
(a) If the hypotheses of Thm.~\ref{thm:main case 2}(a) and $\pmm'(0)<0$ hold, then the constant $\Cminus$ in \eqref{eq:left tails main case 2} is strictly positive for any stationary law $\nu$ of unbounded support at $-\infty$.

\vspace{.1cm}
(b) If the hypotheses of Thm.~\ref{thm:main case 2}(b) and $\ppp'(0)<0$ hold, then the constant $\Cplus$ in \eqref{eq:right tails main case 2A} is strictly positive for any stationary law $\nu$ of unbounded support at $+\infty$.

\vspace{.1cm}
(c) If the hypotheses of Thm.~\ref{thm:main case 2}(c) and $\pmm'(0)<0$ hold, then the constant $\Cminplus$ in \eqref{eq:right tails main case 2B} is strictly positive for any stationary law $\nu$ of unbounded support at both $-\infty$ and $+\infty$..
\end{Prop}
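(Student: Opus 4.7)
The plan is to adapt the three-step strategy used for Prop.~\ref{prop:tail constants positive} (the irreducible case), since the paper explicitly announces that the remaining cases are handled analogously or via Goldie's theory. The three ingredients to be reproduced in each of (a)--(c) are (i) a martingale/optional-sampling inequality bounding the relevant tail mass of $\nu$ from below by
\[
\big(\Prob_\delta[T_t<\infty]\,-\,\Prob[\wh Y_{\bullet}>(K-1)t]\big)\,\nu(\text{$\nu$-mass beyond }K),
\]
(ii) a polynomial lower bound of order $t^{-\kappa}$ on $\Prob_\delta[T_t<\infty]$ via the key Markov renewal theorem after a $\kappa$-shift of measure, and (iii) a matching polynomial upper bound on the tail of the relevant error series.

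For part (a), since $\ppm=0$ forces $\Aplus\ge 0$, trajectories that ever leave state $-$ cannot reach large negative values; hence, starting from $\xi_0=-1$, the stopping time $T_t:=\inf\{n:\wh Z_n<-t\}$ must occur strictly before $\sigma:=\inf\{n:\xi_n=+\}$, and only the pre-$\sigma$ block of the walk matters. I would copy the proof of Lemma~\ref{lem:lower bound t^kappa nu} verbatim, with $(-\infty,-t)$ in place of $(t,\infty)$ and $(-\infty,-K]$ in place of $[K,\infty)$, and then invoke the measure change of Lemma~\ref{lem:Pdelta^kappa case 2BC} with $\theta=\kappaminus$: under $\Prob_-^{(\kappaminus)}$ the chain is frozen at $-$ and $(S_n)$ is an ordinary random walk with positive drift $\pmmprime(\kappaminus)$, so the Kesten-type extension of the Markov renewal theorem used around \eqref{eq:d3} yields $\Prob_-[T_t<\infty]\asymp t^{-\kappaminus}$. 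The extra hypothesis $\pmmprime(0)<0$ gives a $\vth\in(0,\kappaminus)$ with $\pmm(\vth)<1$, which is exactly the moment input needed to reproduce Lemma~\ref{lem:aux2 pos const} for the restricted error $\wh Y_\sigma=\sum_{k<\sigma}\Lri_{k-1}B_k$. Parts (b) and (c) follow the same blueprint: in (b) one works after the hitting time of $+$, on which the chain is absorbed and $(S_n)$ is an ordinary random walk, invoking Prop.~\ref{prop:RThm case 2A} with $\kappa=\kappaplus$ together with $\ppp'(0)<0$; in (c) one combines the pre-$\sigma$ analysis of (a) with the positive one-step transition probability $\pmp(\kappaminus)<\infty$ to transfer the magnitude $\gtrsim t$ acquired in state $-$ to the $+$-side, via Prop.~\ref{prop:RThm case 2B}.

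The main obstacle will be step (iii). In Case~1 one has $\rho(\vth)<1$ on a whole right neighborhood of $0$, so $\wh Y_\infty$ itself admits a fractional moment; in the unilateral case $\rho(\vth)=\pmm(\vth)\vee\ppp(\vth)$ need not drop below $1$ on either branch, and $\wh Y_\infty$ may fail to have any polynomial tail control. The remedy I will pursue is to split the error series at the relevant stopping time ($\sigma$ in (a), the hitting time of $+$ in (b), both in (c)) and bound each restricted sum using only the branch of $\rho$ that is contractive there; the hypotheses $\pmmprime(0)<0$ and/or $\ppp'(0)<0$ in the statement are exactly the one-sided contraction conditions that make this possible. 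Once this surgery is performed, the block-decomposition of Lemma~\ref{lem:aux2 pos const} goes through on each piece, and combining the three estimates via the analog of Lemma~\ref{lem:lower bound t^kappa nu} gives positivity of the corresponding constant.
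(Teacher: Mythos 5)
The paper does not actually prove this proposition: Section~\ref{sec:positivity} establishes only the irreducible case (Prop.~\ref{prop:tail constants positive}) and explicitly defers the unilateral and separated cases to ``analogous arguments'' or a reduction to Goldie's implicit renewal theory \cite{Goldie:91}. Your three-step blueprint is indeed the intended analogy, and for parts (b) and (c) it works: there the hypotheses of Thm.~\ref{thm:main case 2}(b), resp.~(c), together with $\ppp'(0)<0$, resp.~$\ppp(\kappaminus)<1$, force $\rho(\vth)=\pmm(\vth)\vee\ppp(\vth)<1$ for all $\vth\in(0,\kappa)$ (convexity of $\pmm,\ppp$, the fact that $\pmm(0)<1$ in Case~2, and the derivative condition precisely when $\ppp(0)=1$), so \eqref{eq:mean Lip power n} and the whole chain Lemma~\ref{lem:lower bound t^kappa nu}--Lemma~\ref{lem:aux2 pos const} transfers with only notational changes; the lower bound on $\Prob_{\delta}[T_{t}<\infty]$ is even simpler than in Case~1, and your one-step transfer from the $-$ to the $+$ side in (c), using $\Prob[\Aminus<-\eta]>0$, is the right device.

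Part (a), however, contains a genuine gap. The hypotheses of Thm.~\ref{thm:main case 2}(a) impose no moment condition on $\Aplus$ at all, so the summands $\Lri_{k-1}B_{k}$ of the error series, with $\Lri_{k-1}=\Lip(\Lambda_{1}\cdots\Lambda_{k-1})=|\Lambda_{1}\cdots\Lambda_{k-1}(1)|\vee|\Lambda_{1}\cdots\Lambda_{k-1}(-1)|$, may fail to possess any finite moment (already $\Lip(\Lambda_{1})=|\Aminus_{1}|\vee|\Aplus_{1}|$ may). Your remedy of truncating the error series at $\sigma$ does not address this: the $\Aplus$-dependence enters through the Lipschitz factor inside every summand, not through the range of summation, so $\wh{Y}_{\sigma}$ is as uncontrollable as $\wh{Y}_{\infty}$. (A further imprecision: $\sigma$ is a hitting time of the forward sign chain, whereas $T_{t}$ is built from the backward iterates $\wh{Z}_{n}$; the usable fact is that $\Lambda_{1}\cdots\Lambda_{n}(-1)<0$ iff $\Aminus_{1},\ldots,\Aminus_{n}>0$, a statement about the increments rather than about $\sigma$.) To close the gap one must abandon the two-sided comparison of Lemma~\ref{lem:comparison with LIFS} in favor of a one-sided one: either reduce to dimension one via $V:=R^{-}$ and $\wtil{A}:=\Aminus\vee 0$, for which $|(\Psi(R))^{-}-\wtil{A}V|\le B$ and $\Erw\wtil{A}^{\kappaminus}=1$, and invoke the positivity results for Goldie's implicit setting from \cite{buraczewski:damek} (this is what the reduction in Section~\ref{sec:proof main case 3} suggests the authors have in mind); or run the lower-bound construction only along trajectories whose $\Psi$- and $\Lambda$-orbits both remain in a far-left half-line, where the accumulated error is $\sum_{k}\Aminus_{1}\cdots\Aminus_{k-1}B_{k}$ restricted to $\{\Aminus_{j}>0\}$ and has finite $\vth$-moments because $\pmm(\vth)<1$. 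Either route requires an argument that your proposal does not supply.
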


\textit{\bfseries Case 3 (separated case)}. $\pmp=0$ and $\ppm=0$.

\begin{Prop}\label{prop:tail constants positive3}
(a) If the hypotheses of Thm.~\ref{thm:main case 3}(a) and $\pmm'(0)<0$ hold, then the constant $\Cminus$  is strictly positive for any stationary law $\nu$ of unbounded support at $-\infty$.

\vspace{.1cm}
(b) If the hypotheses of Thm.~\ref{thm:main case 3}(b) and $\ppp'(0)<0$, then the constant $\Cplus$  is strictly positive for any stationary law $\nu$ of unbounded support at $+\infty$.
\end{Prop}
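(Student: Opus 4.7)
The plan is to mirror the proof of Proposition \ref{prop:tail constants positive} with substantial simplifications due to the separated structure, and/or to appeal directly to Goldie's implicit renewal theory \cite{Goldie:91}, upon which Theorem \ref{thm:main case 3} itself is based. I treat part (a); part (b) is symmetric via the reflection $x \mapsto -x$.

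Because $\pmp = \ppm = 0$, one has $\Aminus, \Aplus \geq 0$ a.s., so $\Lambda$ preserves signs. Starting from $\xi_0 = -1$, the driving chain stays in $\{-1, 0\}$ and $S_n = \sum_{k=1}^n \log \Aminus_k$ is an ordinary one-dimensional (possibly killed) random walk. The hypotheses $\Erw \Aminus^{\kappaminus} = 1$, $\Erw \Aminus^{\kappaminus} \log \Aminus < \infty$, and non-arithmeticity of $\log \Aminus$ (from Thm.~\ref{thm:main case 3}(a)), together with the extra drift assumption $\pmm'(0) = \Erw[\log \Aminus \cdot \1_{\{\Aminus > 0\}}] < 0$ of Prop.~\ref{prop:tail constants positive3}(a), place us exactly in the setup of the positivity addendum to Goldie's implicit renewal theorem, so that the argument of \cite[Sect.~4]{Goldie:91} yields $\Cminus > 0$ as soon as $\nu$ has unbounded support at $-\infty$.

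To make the argument parallel to that of Prop.~\ref{prop:tail constants positive}, one would alternatively establish the three ingredients of Lemmata \ref{lem:lower bound t^kappa nu}--\ref{lem:aux2 pos const} in the one-sided setting. First, an optional-sampling inequality
\[
\nu((-\infty,-t)) \,\geq\, \bigl(\Prob[T_t < \infty] - \Prob[\wh{Y}_\infty > (K-1)t]\bigr)\,\nu((-\infty,-K])
\]
with $T_t := \inf\{n : \Lambda_1 \cdots \Lambda_n(-1) < -t\}$, obtained exactly as in the proof of Lemma \ref{lem:lower bound t^kappa nu}. Second, the lower bound $\Prob[T_t < \infty] \geq K_1 t^{-\kappaminus}$, which reduces here to the classical one-dimensional Cram\'er--Lundberg estimate for $S_n$ and is much simpler than the Markov-renewal argument used in Lemma \ref{lem:aux1 pos const}. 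Third, the upper bound $\Prob[\wh{Y}_\infty > t] \leq K_2 t^{-\kappaminus}$, in the spirit of Lemma \ref{lem:aux2 pos const}. A combination then gives $\liminf_{t\to\infty} t^{\kappaminus}\nu((-\infty,-t)) > 0$, which forces $\Cminus > 0$ by Thm.~\ref{thm:main case 3}(a).

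The main obstacle in this direct approach is the last step, because $\Lip(\Lambda) = \Aminus \vee \Aplus$ involves $\Aplus$, on which Thm.~\ref{thm:main case 3}(a) makes no assumption. One fixes this either by running the comparison of Lemma \ref{lem:comparison with LIFS} only along iterations that remain on the negative halfline --- noting that in Case 3 the LIFS preserves signs and that for $|x|$ large enough $\Psi(x)$ and $\Lambda(x)$ share the same sign with high probability, so the effective error controlling $\nu((-\infty,-t))$ involves only $\Aminus$-products --- or, more conceptually, by recognizing that the tail of $\nu$ at $-\infty$ depends only on the one-sided recursion on $(-\infty,0)$, to which Goldie's one-dimensional positivity result applies verbatim. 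This second route is the one I would adopt in the write-up, giving the shortest and most transparent proof.
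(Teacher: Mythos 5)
Your proposal is correct and coincides with the paper's treatment: the paper gives no detailed proof of Proposition \ref{prop:tail constants positive3}, stating only that the separated case ``can be either treated in an analogous way or reduced to Goldie's implicit renewal theory,'' which is exactly the reduction you carry out (including the correct observation that the truncation $\Psi(x)\wedge 0$ versus $(\Aminus\vee 0)x\wedge 0$ turns the left tail into a genuine one-dimensional Goldie problem, avoiding any moment assumption on $\Aplus$). Your identification of the $\Lip(\Lambda)=\Aminus\vee\Aplus$ obstacle in the direct route, and the fix via the one-sided recursion, is a sound and worthwhile clarification of what ``analogous'' means here.
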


\section{Existence of a stationary distribution}
\label{sec:existence}

In order to wrap up our presentation, this very short section provides conditions which ensure the existence of at least one stationary distribution of the given \ALIFS~and are directly seen to hold in our main results. We do not strive for utmost generality here, nor do we address the uniqueness question. While existence of a stationary distribution depends on the behavior of the IFS at infinity and could be derived from weaker assumptions, uniqueness is a "local" property and needs "local" assumptions that are not imposed in the very general setting of this work. On the other hand, the subsequent result is tailored to our needs and very easily deduced by a tightness argument using \eqref{eq:theta-norm whYinfty}.

\begin{Prop}\label{prop:stationary distribution}
Suppose that there exists $\vth>0$ such that $\rho(\vth)<1$ and $\Erw B^{\vth} <\infty$.
Then the \ALIFS\ $(X_{n})_{n\ge 0}$ admits at least one stationary distribution.
\end{Prop}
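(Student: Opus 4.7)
The plan is to exploit the backward iteration identity \eqref{eq:X_{n} eqdist hatX_{n}} together with the linear comparison \eqref{eq:AL backward} of Lemma \ref{lem:comparison with LIFS} to establish tightness of the one-point transition laws, and then to invoke a Krylov--Bogolyubov argument based on the Feller property \eqref{eq:Feller property}.

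First I would record two consequences of the hypothesis $\rho(\vth)<1$. By \eqref{eq:Lip vs rho} of Lemma \ref{lem:Ptheta vs Lambda},
\[
\lim_{n\to\infty}\frac{1}{n}\log\Erw\Lri_n^{\vth}\ =\ \log\rho(\vth)\ <\ 0,
\]
since $\Lri_n\eqdist\Lip(\Lambda_n\cdots\Lambda_1)$ (the $\Lambda_i$ being i.i.d.). Hence $\Erw\Lri_n^{\vth}\le K\rho_0^{\,n}$ for some $K<\infty$ and $\rho_0\in(0,1)$; in particular $\Lri_n\to 0$ almost surely by Borel--Cantelli. Combining this geometric decay with $\Erw B^{\vth}<\infty$, the estimate \eqref{eq:theta-norm whYinfty} used in the proof of Lemma \ref{lem:moments of R} yields $\|\wh Y_\infty\|_\vth<\infty$, so $\wh Y_\infty=\sum_{n\ge 0}\Lri_n B_{n+1}$ is almost surely finite.

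Next, I would fix $x\in\R$ and start the chain at $X_0=x$. Lemma \ref{lem:comparison with LIFS} gives
\[
|\wh X_n|\ \le\ |\wh Z_n|+\wh Y_n\ \le\ \Lri_n|x|+\wh Y_\infty\quad\text{for all }n\ge 0,
\]
and the right-hand side is dominated by the almost surely finite random variable $|x|+\wh Y_\infty$ with $\Lri_n|x|\to 0$ in probability. Hence $(\wh X_n)_{n\ge 0}$ is tight under $\Prob_x$. Transferring this via \eqref{eq:X_{n} eqdist hatX_{n}}, the family $\{\Prob_x[X_n\in\cdot\,]:n\ge 0\}$ is tight, and therefore so are the Cesàro averages
\[
\mu_N(\cdot)\,:=\,\frac{1}{N}\sum_{n=0}^{N-1}\Prob_x[X_n\in\cdot\,],\quad N\ge 1.
\]

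Finally, the a.s.~continuity assumption \eqref{eq:Feller property} together with dominated convergence shows that $x\mapsto\Erw\phi(\Psi(x))$ is bounded and continuous for every $\phi\in C_b(\R)$, so $(X_n)_{n\ge 0}$ is Feller. Extracting a weakly convergent subsequence of $(\mu_N)$ by Prokhorov's theorem and invoking the standard Krylov--Bogolyubov argument, the corresponding weak limit is an invariant probability measure of the chain. The plan is essentially routine once the right tightness bound is in place; the only mildly delicate point is the finiteness of $\wh Y_\infty$, which is precisely what the geometric decay of $\Erw\Lri_n^\vth$ and the $L^\vth$--integrability of $B$ are designed to ensure.
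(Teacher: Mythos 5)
Your argument is correct and follows essentially the same route as the paper: both rest on the comparison of Lemma \ref{lem:comparison with LIFS}, the a.s.~finiteness of $\wh Y_\infty$ obtained from \eqref{eq:theta-norm whYinfty} under $\rho(\vth)<1$ and $\Erw B^{\vth}<\infty$, the resulting tightness of the marginal laws, and the Feller property combined with Krylov--Bogolyubov. The only cosmetic difference is that the paper starts the chain at $X_0=0$, which makes the extra term $\Lri_n|x|$ in your bound disappear.
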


As a particular consequence, the convexity of the spectral radius $\rho(\theta)$ yields the existence of an invariant law whenever \eqref{eq:moments AlogA B} holds with $\kappa$ such that $\rho(\kappa)=1$.

\begin{proof}
Suppose that $(X_{n})_{n\ge 0}$ has initial state $X_0=0$ and recall from Lemma
\ref{lem:comparison with LIFS} that
\begin{align*}
|X_{n}|\ =\ |X_{n}-\Lambda_{n}\cdots\Lambda_{1}(0)|\ \le\ Y_{n}
\end{align*}
for each $n\in\N$. Using also $Y_{n}\eqdist\wh{Y}_{n}\uparrow\wh{Y}_{\infty}$, we infer that
$$ \Prob_{0}[|X_{n}|>K]\ \le\ \Prob[Y_{n}>K]\ =\ \Prob[\wh{Y}_{n}>K]\ \le\ \Prob[\wh{Y}_{\infty}>K] $$
for each $K>0$ and thus the uniform tightness of $(X_{n})_{n\ge 0}$ under $\Prob_{0}$ because, by \eqref{eq:theta-norm whYinfty}, $\wh{Y}_{\infty}$ is almost surely finite under the given assumptions. Since $(X_{n})_{n\ge 0}$ is also a Feller chain, the existence of a stationary distribution now follows by the Krylov-Bogoliubov theorem, see e.g. \cite[Thm.~3.1.1]{DaPratoZab:96}.
\qed \end{proof}

\section{The \AR(1) model with \ARCH(1) errors revisited}\label{sec:AR(1) with ARCH errors}

This model has already been mentioned in Subsection \ref{subsec:examples}. Defined as  the \ALIFS~generated by i.i.d.~copies of the random function
$$ \Psi(x)\,=\,\alpha x+Z\!\left(\beta+\lambda x^{2}\right)^{1/2} $$
for some $(\alpha,\beta,\lambda)\in\R\times\IRg^{2}$ and a random variable $Z$, it provides an ideal example to illustrate our results because all three cases can occur depending on how the parameters $\alpha,\beta,\lambda$ and (the range of) the random variable $Z$ are chosen. Since
$$ 0\ \le\ \left(\beta+\lambda x^{2}\right)^{1/2}-\left(\lambda x^{2}\right)^{1/2}\ =\ \frac{\beta}{\left(\beta+\lambda x^{2}\right)^{1/2}+\left(\lambda x^{2}\right)^{1/2}}\ \le\ \beta^{1/2} $$
for all $x\in\R$, we see that Condition \eqref{eq:def2 AL} holds with
\begin{gather*}
\Apm\,=\,\alpha\pm\lambda^{1/2}Z\quad\text{and}\quad B\,=\,\beta^{1/2}Z,
\shortintertext{so that}
\pmp\,=\,\Prob[Z>\alpha/\lambda^{1/2}]\quad\text{and}\quad\ppm\,=\,\Prob[Z<-\alpha/\lambda^{1/2}].
\shortintertext{and}
P(\theta)\ =\ \begin{pmatrix}
\Erw\big[|\alpha-\lambda^{1/2} Z|^{\theta}\1_{\{Z<\alpha/\lambda^{1/2}\}}\big] &\Erw\big[|\alpha-\lambda^{1/2} Z|^{\theta}\1_{\{Z>\alpha/\lambda^{1/2}\}}\big]\\[1mm]
\Erw\big[|\alpha+\lambda^{1/2} Z|^{\theta}\1_{\{Z<-\alpha/\lambda^{1/2}\}}\big] &\Erw\big[|\alpha+\lambda^{1/2} Z|^{\theta}\1_{\{Z>-\alpha/\lambda^{1/2}\}}\big]
\end{pmatrix}.
\end{gather*}
Now one can easily see that all three cases can occur, namely the
\begin{itemize}\itemsep1pt
\item \emph{irreducible case} if $\Prob[Z>\alpha/\lambda^{1/2}]$ and $\Prob[Z<-\alpha/\lambda^{1/2}]$ are both positive,
\item \emph{unilateral case} if $Z>-\alpha/\lambda^{1/2}$ a.s.~and $\Prob[Z>\alpha/\lambda^{1/2}]>0$, and
\item \emph{separated case} if $|Z|\le\alpha/\lambda^{1/2}$ a.s.
\end{itemize}
By invoking our results, we conclude under the respective additional conditions imposed there, especially (in all three cases)
\begin{equation*}
\Erw|Z|^{\kappa}\log|Z|\,<\,\infty,
\end{equation*}
that any stationary law of unbounded support has
\begin{itemize}\itemsep1pt
\item \emph{irreducible case:} left and right power tails of order $\kappa$ with $\kappa$ defined as the minimal positive value such that $\rho(\kappa)=1$ and with constants $\Cminus,\Cplus>0$ in \eqref{eq:tails main case 1}.
\item \emph{unilateral case:} has left power tails of order $\kappaminus$ and/or right power tails of order $\kappaminus\wedge\kappaplus$ with $\kappaminus,\kappaplus$ being the unique positive numbers (if they exist and are distinct) such that
$$ \Erw|\alpha-\lambda^{1/2} Z|^{\kappaminus}\1_{\{Z<\alpha/\lambda^{1/2}\}}\,=\,1\quad\text{and}\quad\Erw|\alpha+\lambda^{1/2} Z|^{\kappaminus}\1_{\{Z>-\alpha/\lambda^{1/2}\}}\,=\,1 $$
and with constants $\Cminus,\Cplus,\Cminplus>0$ in \eqref{eq:KRT1 main case 2}, \eqref{eq:right tails main case 2A} and \eqref{eq:right tails main case 2B}, respectively.
\item \emph{separated case}: has left power tails of order $\kappaminus$ and/or right power tails of order $\kappaplus$ with $\kappaminus,\kappaplus$ as in the previous case (if they exist) and with constants $\Cminus,\Cplus>0$ in \eqref{eq1:main case 3} and \eqref{eq2:main case 3}, respectively.
\end{itemize}
The case when $Z$ has a symmetric law, which rules out the unilateral case, has already been studied in \cite[Sect.~8.4]{EmKlMi:97} for $\alpha=0$ and Gaussian $Z$, in \cite{BorkovecKl:01},  and in \cite[Subsect.~6.1]{Alsmeyer:16} by showing that a stationary law must be symmetric as well and thus have left and right tails of the same order which in fact allows to resort to Goldie's implicit renewal theory.

\section{Appendix}

The following lemma confirms that $\rho(\theta)=1$ in a right neighborhood of 0 can occur in the irreducible case only if the nonlattice assumption \eqref{eq:log A nonlattice} is violated.

\begin{Lemma}\label{lem:rho(s)<1}
Suppose that $P(\theta)$ exists and has spectral radius $\rho(\theta)=1$ for all $\theta\in I=[0,\theta_{0}]$, $\theta_{0}>0$. Then one of the following alternatives holds:
\begin{itemize}[leftmargin=.8cm]\itemsep2pt
\item[(a)] $\pmp\wedge\ppm=0$ and thus $\Aminus=1$ or $\Aplus=1$ a.s.
\item[(b)] $\pmp(\theta)\ppm(\theta)\equiv\gamma>0$ for all $\theta\in I$ and
$$ \Aminus\ =\ \begin{cases} 1&\text{if }\Aminus>0\\
-a^{-1}&\text{if }\Aminus<0\end{cases}
\quad\text{and}\quad
\Aplus\ =\ \begin{cases} 1&\text{if }\Aplus>0\\
a&\text{if }\Aplus<0\end{cases}
\quad\text{a.s.} $$
for some $a>0$.
\end{itemize}
\end{Lemma}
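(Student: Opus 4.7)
The first step is to replace $\rho(\theta)=1$ by a scalar identity on the entries of $P(\theta)$. Squaring \eqref{eq:rho(theta) explicit} and simplifying, or just using $\det(I-P(\theta))=0$, shows that $\rho(\theta)=1$ on $I$ is equivalent to
\[
(1-\pmm(\theta))(1-\ppp(\theta))\,=\,\pmp(\theta)\ppm(\theta),\qquad\theta\in I.
\]
\emph{Case~(a)}: WLOG $\pmp\equiv 0$, so $P(\theta)$ is upper triangular and $\rho(\theta)=\pmm(\theta)\vee\ppp(\theta)$. The closed sets $\{\pmm=1\}$ and $\{\ppp=1\}$ cover $I$, and since both $\pmm$ and $\ppp$ are real-analytic Mellin transforms on $(0,\theta_\infty)$, whichever has a limit point in the interior of $I$ is identically $1$ there. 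Assuming $\pmm\equiv 1$, continuity at $0$ gives $\Prob[\Aminus>0]=1$, and $\Erw\Aminus^{\theta}\equiv 1$ on $I$ then forces $\Aminus=1$ a.s.\ by uniqueness of moment generating functions.

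\emph{Case~(b) -- preliminaries.} Now $\pmp\wedge\ppm>0$. Evaluating the scalar identity at $\theta=0$, writing $\Prob[\Aminus\le 0]=\Prob[\Aminus=0]+\Prob[\Aminus<0]$ (and similarly for $\Aplus$), expanding, and using positivity of $\pmp$ and $\ppm$ together with non-negativity of all summands forces $\Prob[\Aminus=0]=\Prob[\Aplus=0]=0$. Thus $P(0)$ is stochastic with $v(0)=(1,1)^{\top}$ and stationary law $\pi$ from \eqref{eq:def of pi}. By simplicity of the dominant eigenvalue $\rho(\theta)$ and analyticity of the entries $\pmm,\pmp,\ppm,\ppp$ on $(0,\theta_\infty)$, analytic perturbation theory yields a right Perron eigenvector $v(\theta)=(\vminus(\theta),\vplus(\theta))^{\top}$ that is analytic in $\theta$ on $(0,\theta_\infty)$; since $\rho(\theta)\equiv 1$ on $I$, Lemma~\ref{lem:Pdelta^kappa case 1} tells us that
\[
N_n(\theta)\,:=\,e^{\theta S_n}v_{\xi_n}(\theta),\quad n\ge 0,
\]
is a strictly positive $(\cF_n,\Prob_\delta)$-martingale \emph{simultaneously} for every $\theta\in I$ and every $\delta\in\bbS^{0}$.

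\emph{Case~(b) -- martingale argument.} Pick an interior point $\theta_*\in(0,\theta_0)$, where differentiation under the conditional expectation is legal by dominated convergence in a small neighbourhood of $\theta_*$. Two differentiations in $\theta$ show that $\partial_\theta N_n(\theta_*)$ and $\partial_\theta^{2}N_n(\theta_*)$ are again martingales; written in terms of the tilted measure $\Prob_{\delta}^{(\theta_*)}$ of \eqref{eq:def erw} and of $c(\delta):=v'_\delta(\theta_*)/v_\delta(\theta_*)$, $h(\delta):=v''_\delta(\theta_*)/v_\delta(\theta_*)-c(\delta)^{2}$, the first derivative gives that $\wtil M_n:=S_n+c(\xi_n)$ is a $\Prob_{\delta}^{(\theta_*)}$-martingale, while the second gives that $\wtil M_n^{2}+h(\xi_n)$ is a $\Prob_{\delta}^{(\theta_*)}$-martingale. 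Taking expectations under the stationary law $\wh\pi(\theta_*)$ and using that $h(\xi_n)$ is $\theta_*$-stationary yields $\Erw_{\wh\pi(\theta_*)}^{(\theta_*)}\wtil M_n^{2}=\Erw_{\wh\pi(\theta_*)}^{(\theta_*)}\wtil M_0^{2}$ for every $n$. Orthogonality of martingale increments then forces $\Erw_{\wh\pi(\theta_*)}^{(\theta_*)}(\wtil M_k-\wtil M_{k-1})^{2}=0$, and since $\wh\pi(\theta_*)$ has full support on $\bbS^{0}$ and $\Prob_\delta^{(\theta_*)}$ is equivalent to $\Prob_\delta$ on each $\cF_n$, we conclude $\zeta_k=c(\xi_{k-1})-c(\xi_k)$ a.s.\ under every $\Prob_\delta$.

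\emph{Conclusion and main difficulty.} Reading $\zeta_k=\log|\Adelta_k|$ with $\delta=\xi_{k-1}$ in each of the four cases $(\xi_{k-1},\xi_k)\in\{-,+\}^{2}$ and setting $a:=e^{c(+)-c(-)}>0$ gives $\Aminus=1$ on $\{\Aminus>0\}$, $\Aminus=-1/a$ on $\{\Aminus<0\}$, $\Aplus=1$ on $\{\Aplus>0\}$ and $\Aplus=-a$ on $\{\Aplus<0\}$. Then $\pmp(\theta)=a^{-\theta}\Prob[\Aminus<0]$ and $\ppm(\theta)=a^{\theta}\Prob[\Aplus<0]$, so their product equals the $\theta$-independent constant $\gamma:=\Prob[\Aminus<0]\Prob[\Aplus<0]>0$, which is the content of~(b). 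The delicate point of the proof is the second-derivative step: it requires the analyticity of $v(\theta)$ (from simplicity of $\rho(\theta)$ and analytic perturbation) together with enough uniform integrability to pull $\partial_\theta^{2}$ inside the conditional expectation. Working at an interior $\theta_*\in(0,\theta_0)$ rather than at the boundary point $\theta=0$ is what makes these technicalities routine.
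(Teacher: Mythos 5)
Your proof is correct, but for the main alternative (b) it takes a genuinely different route from the paper. The paper also starts from the scalar identity $(1-\pmm(\theta))(1-\ppp(\theta))=\pmp(\theta)\ppm(\theta)$ on $I$, but then argues purely by convexity: $\log\big(\pmp(\theta)\ppm(\theta)\big)$ is convex (sum of logarithms of restricted moment generating functions) while $\log\big((1-\pmm(\theta))(1-\ppp(\theta))\big)$ is concave, so the common value is affine, i.e.\ $\pmp(\theta)\ppm(\theta)=\gamma e^{b\theta}$; identifying $(1-\pmm(\theta))^{-1}(1-\ppp(\theta))^{-1}$ as the transform of the convolution of two defective renewal measures, each of which contains $\delta_{0}$, then forces $b=0$ and the degeneracy of all four conditional laws. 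You replace this by the standard degeneracy criterion for Markov random walks: differentiate the Wald martingale $e^{\theta S_{n}}v_{\xi_{n}}(\theta)$ twice at an interior point $\theta_{*}$, deduce that $S_{n}+c(\xi_{n})$ is an $L^{2}$-martingale with constant second moment under the tilted stationary measure, and conclude from orthogonality of increments that $\zeta_{k}$ is the coboundary $c(\xi_{k-1})-c(\xi_{k})$. Both arguments are sound. The paper's is more elementary (no perturbation theory, no differentiation under the expectation), whereas yours makes transparent that the degenerate situation is exactly the lattice/coboundary case excluded by \eqref{eq:log A nonlattice}, and it supplies two details the paper leaves implicit: the treatment of alternative (a), and the verification that $\Prob[\Aminus=0]=\Prob[\Aplus=0]=0$ (which your change-of-measure argument genuinely needs, while the paper's does not). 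Two small remarks: in case (a), to guarantee that one of the closed sets $\{\pmm=1\}$, $\{\ppp=1\}$ has a limit point in the \emph{open} interval you should invoke that an interval is not the union of two closed sets with empty interior (Baire); and your conclusion $\Aplus=-a$ on $\{\Aplus<0\}$ is the correct one --- the missing minus sign in the statement (and in the paper's own final display) is a typo.
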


Regarding \eqref{eq:log A nonlattice}, Alternative (b) indeed implies that it fails because
$$ \Prob_{\wh{\pi}(\kappa)}[\log|{}^{\xi_{0}}\!A|-a_{\xi_{1}}+a_{\xi_{0}}\in d\Z]\ =\ 1 $$
when choosing $a_{\pm}=0$ and $d=\log a$.

\begin{proof}
Using Formula \eqref{eq:rho(theta) explicit} for $\rho(\theta)$, one can readily check that $\rho(\theta)=1$ for all $\theta\in I$ holds iff
\begin{equation}\label{crucial identity}
(1-\pmm(\theta))(1-\ppp(\theta))\ =\ \pmp(\theta)\ppm(\theta)\quad\text{for all }\theta\in I.
\end{equation}
Assuming $\pmp\wedge\ppm>0$ and thus $\pmm\vee\ppp<1$, we infer $\pmm(\theta)\vee\ppp(\theta)<1$ for all $\theta\in I'=[0,\theta_{1}]\subseteq I$ for some $\theta_{1}>0$, w.l.o.g.~$I'=I$. Observe also that $\pmp(\theta)=\pmp\,\Erw[|\Aminus|^{\theta}|\Aminus<0]$ is a moment generating function modulo the scalar $\pmp$ and therefore log-convex on $I$. The same holds naturally for $\ppm(\theta)$. On the other hand, the functions
$$ \log(1-\pmm(\theta))\quad\text{and}\quad\log(1-\ppp(\theta)) $$
are concave, being compositions of an increasing concave function with a concave function. Consequently, the logarithms of the products in \eqref{crucial identity} are both concave and convex and thus linear on $I$. This shows that, with $\gamma:=\pmp\ppm$,
\begin{equation}\label{eq:mgf identities}
\pmp(\theta)\ppm(\theta)\ =\ \gamma e^{b\theta}\quad\text{and}\quad\frac{1}{(1-\pmm(\theta))(1-\ppp(\theta))}\ =\ \gamma^{-1}e^{-b\theta}
\end{equation}
for all $\theta\in I$ and some $b\in\R$. Since $(1-\pmm(\theta))^{-1}$ and $(1-\ppp(\theta))^{-1}$ are the moment generating functions of the defective renewal measures
\begin{gather*}
\IHminus\ =\ \delta_{0}\,+\,\sum_{n\ge 1}\pmm^{n}\,\Prob[\log|\Aminus|\in\cdot|\Aminus>0]^{*n}
\shortintertext{and}
\IHplus\ =\ \delta_{0}\,+\,\sum_{n\ge 1}\ppp^{n}\,\Prob[\log|\Aplus|\in\cdot|\Aplus>0]^{*n},
\end{gather*}
respectively, where $\delta_{0}$ denotes Dirac measure at 0, we infer that $\IHminus*\IHplus$ puts all mass at $-b$ which is only possible if $b=0$ and
$$ \Prob[\Aminus=1|\Aminus>0]\ =\ \Prob[\Aplus=1|\Aplus>0]\ =\ 1. $$
Now the first identity of \eqref{eq:mgf identities} provides that $\gamma^{-1}\pmp(\theta)\ppm(\theta)$ is the moment generating function of both $\delta_{0}$ and of two independent random variables with respective laws $\Prob[\log|\Aminus|\in\cdot|\Aminus<0]$ and $\Prob[\log|\Aplus|\in\cdot|\Aplus<0]$, giving
$$ \Prob[\Aminus=-a^{-1}\in\cdot|\Aminus<0]\ =\ \Prob[\Aplus=a|\Aplus<0]\ =\ 1\quad\text{for some }a>0. $$
This completes the proof.
\qed \end{proof}

\vspace{1cm}
\footnotesize
\noindent   {\bf Acknowledgements.}
The authors would like to express their sincere gratitude to an anonymous referee whose numerous suggestions and constructive comments helped to improve the final version of this article.\\
G. Alsmeyer was partially funded by the Deutsche Forschungsgemeinschaft (DFG) under Germany's Excellence Strategy EXC 2044--390685587, Mathematics M\"unster: Dynamics--Geometry--Structure.\\
D. Buraczewski was partially supported by the
National Science Center, Poland (grant number 2019/33/B/ST1/00207).

\bibliographystyle{abbrv}
\bibliography{ifs}

\end{document}